\documentclass[10 pt,a4paper,notitlepage,twoside,openright]{article}

\usepackage{amsmath}
\usepackage{amsfonts}
\usepackage{amssymb}
\usepackage{layout}
\usepackage{pifont}
\usepackage{graphicx}

\usepackage{}


\newtheorem{theorem}{Theorem}[section]
\newtheorem{lemma}[theorem]{Lemma}
\newtheorem{corollary}[theorem]{Corollary}
\newtheorem{proposition}[theorem]{Proposition}

\newtheorem{remark}{Remark}[section]


\newenvironment{proof}
{\noindent{\itshape -- Proof: }}
{\hfill $\square$}




\DeclareMathOperator{\curl}{curl}

\DeclareMathOperator{\im}{im}

\DeclareMathOperator{\supp}{supp}
\DeclareMathOperator{\myspan}{span}

\DeclareMathOperator{\st}{st}


\newcommand{\rmd}{\mathrm{d}}

\newcommand{\rmC}{\mathrm{C}}

\newcommand{\rmH}{\mathrm{H}}
\newcommand{\rmL}{\mathrm{L}}

\newcommand{\bbR}{\mathbb{R}}

\newcommand{\calH}{\mathcal{H}}

\newcommand{\calP}{\mathcal{P}}

\newcommand{\calT}{\mathcal{T}}

\newcommand{\beq}{\begin{equation}}
\newcommand{\eeq}{\end{equation}}

\newcommand{\ts}{\textstyle}

\newcommand{\id}{\mathrm{id}}

\newcommand{\myand}{\textrm{ and }}

\newcommand{\for}{\textrm{ for }}
\newcommand{\myfor}{\for}


\newcommand{\mixed}[8]{
\left\{
\begin{array}{l}
#1 \in #3 \\
#2 \in #4
\end{array}
\right.
\quad
\left\{
\begin{array}{llll}
\forall #5 \in #3 \quad & #7 \\
\forall #6 \in #4 \quad & #8
\end{array}
\right.
}

\newcommand{\witharrowunder}[1]{
\begin{array}{c}
#1 \\
\longrightarrow
\end{array}
}

\newcommand{\witharrowright}[1]{
\begin{array}{cc}
#1&\downarrow
\end{array}
}


\newcommand{\bibauthor}[1]{{\scshape #1}: }
\newcommand{\bibtitle}[1]{{\itshape #1}; }
\newcommand{\bibspec}[1]{{\upshape #1}.}







\title{\vspace*{-1.5cm}
\begin{flushright}
\begin{minipage}{4cm}
\tiny
{\sc Dept. of Math. \hfill University of Oslo\\
Pure Mathematics \hfill No.~19\\
ISSN 0806--2439 \hfill August 2005}
\end{minipage}
\end{flushright}
\vskip1cm
Stability of Hodge decompositions\\ in finite element spaces of differential forms\\ in arbitrary dimension}
\author{Snorre H. {\sc Christiansen}\footnote{CMA, Universitetet i Oslo, PB 1053 Blindern, NO-0316 Oslo, Norway. email : {\tt snorrec@math.uio.no}}
}

\date{July 21, 2005}

\begin{document}

\maketitle

\begin{abstract}
We elaborate on the interpretation of some mixed finite element spaces in terms of differential forms. First we develop a framework in which we show how tools from algebraic topology can be applied to the study of their cohomological properties. The analysis applies in particular to certain $hp$ finite element spaces, extending results in trivial topology often referred to as the exact sequence property. Then we define regularization operators. Combined with the standard interpolators they enable us to prove discrete Poincar\'e-Friedrichs inequalities and discrete Rellich compactness for finite element spaces of differential forms of arbitrary degree on compact manifolds of arbitrary dimension.
\end{abstract}

\section{Introduction}
A close kinship between some mixed finite element spaces  and some old constructs of algebraic topology such as simplicial cochains  was pointed out by Bossavit \cite{Bos88} and has received an ever increasing attention among numerical analysts, see in particular Arnold \cite{Arn02}. Early papers on mixed finite elements include Raviart-Thomas \cite{RavTho77}, N\'ed\'elec \cite{Ned80}\cite{Ned86} and Brezzi-Douglas-Marini \cite{BreDouMar85}. Surveys on mixed finite elements can be found in Brezzi-Fortin \cite{BreFor91} and Roberts-Thomas \cite{RobTho91}, whereas for material on simplicial cochains we refer to Spanier \cite{Spa66} and  Gelfand-Manin \cite{GelMan03}. This link has led to a reinterpretation of many results in numerical analysis, see for instance  Hiptmair \cite{Hip99} and Boffi \cite{Bof01}, and has inspired the construction of new finite element spaces, see for instance Arnold-Winther \cite{ArnWin02} and Buffa-Christiansen \cite{BufChr05}.

More specifically, some interpolation operators are known to provide a commuting diagram, linking the De Rham complex of smooth differential forms to complexes of finite element spaces of piecewise polynomial forms satisfying compatibility conditions on interfaces between the cells of the mesh. Based on such commuting diagrams it is often remarked that when the De Rham sequence is exact the finite element sequence is also exact. To carry out this reasoning one usually extends the interpolators from smooth forms to spaces containing also finite element forms, which leads to some technical difficulties. For instance one often considers Sobolev spaces of differential forms. But continuity of interpolators on such spaces is based on the Sobolev injection theorems (see  e.g. Adams-Fournier \cite{AdaFou03}), which in higher dimensions require $W^{sp}$ spaces with $s$ and $p$ large. Such norm estimates are also used when the commuting diagram property is used to prove convergence estimates such as discrete compactness in the sense of Kikuchi \cite{Kik89} see Boffi \cite{Bof00}\cite{Bof01}. This property is crucial to the study of eigenvector approximations, see Boffi et al. \cite{BofFerGasPer99} and Caorsi-Fernandes-Raffetto \cite{CaoFerRaf01}. The lack of continuity of the standard interpolators leads to technical difficulties even in two and three space dimensions. For many problems in physics it would be desirable to have estimates in four space dimensions and dimensions higher than four arise for instance in applications to finance.


The argument using commuting diagrams to prove that the discrete complex has the same cohomology as the continuous one, is limited to trivial topologies (trivial De Rham cohomology groups, except perhaps the first or last one). But for many problems in electromagnetics the topology of the device is non trivial and this is an essential feature of the way it operates. For instance in a transformer the electric currents of the coils excite an electric current on an iron torus, mainly along a certain harmonic vector field.  The space of harmonic vector fields on a compact manifold can be seen as a realization of a cohomology group of (classes of) $1$-forms via a Riemannian metric -- this is part of Hodge theory, see e.g. Taylor \cite{Tay96} (Chapter 5). Thus for the numerical simulation of such devices it is a minimum requirement that the cohomological properties of finite element spaces are correct.  Devices with non-trivial topology are also the object of recent numerical studies, see in particular Rapetti-Dubois-Bossavit \cite{RapDubBos03}. However these authors restrict attention to lowest order finite elements and domains in $\bbR^3$ with connected boundaries. Partial results on the dimensions of various finite element cohomology groups have long been obtained using Euler-Poincar\'e formulas, see for instance N\'ed\'elec \cite{Ned82}. 

That the De Rham sequence on general manifolds has the same cohomology as lowest order finite elements is in fact a deep theorem of De Rham, since the lowest order finite elements correspond to standard simplicial cochains. This by now rather well-known correspondence between finite elements and simplicial cochains was pointed out by Bossavit \cite{Bos88} following a remark by Kotiuga and is detailed for completeness in Proposition \ref{prop:fe} of this paper. For a proof of De Rham's theorem we refer to Weil \cite{Wei52}.
In this paper we provide a sufficient condition under which other finite element spaces also have cohomology naturally isomorphic to the De Rham cohomology.  The analysis applies in particular to certain $hp$ finite element spaces of differential forms, whose efficiency for simulating electromagnetic problems has been demonstrated by Demkowicz et al., see the review \cite{Dem03}.

In fact we will work with arbitrary simplicial complexes which need not be manifolds, so we shall rather prove that other finite element spaces have cohomology isomorphic to simplicial cohomology. This generalization of the kind of topological space on which we construct functional spaces, is crucial to the method of proof. Simplicial complexes which do not generate manifolds also appear in applications: for instance, returning to the example of electromagnetic phenomena, antennas are often constructed by adjoining more than two metal sheets (called screens in this setting) along edges. Such antennas are also successfully simulated in many industrial codes, though there is a lack of theory for the error analysis of these simulations. This paper can also be seen as a contribution to a construction of a framework where such problems can be addressed. For instance there could be interest in extending the results of Buffa - Christiansen \cite{BufChr03} to the case of several interconnected screens. We therefore provide a detailed description of piecewise smooth differentiable forms on arbitrary simplicial complexes.

We notice furthermore that a proof of the fact that certain $hp$ finite element spaces have the right cohomology can be found in Hiptmair \cite{Hip02} (Theorem 3.7 p. 273). Apart from greater generality, the main advantage of our result lies perhaps in the simplicity of the hypothesis and the efficiency of the proof, taking full advantage of the language and tools of homological algebra. Indeed the present results turn out to be a rather straightforward application of the basic tools of homological algebra and, if nothing else, this part of the paper might serve as an illustration of how these tools can be applied in a finite element setting. Probably the results of \S \ref{sec:def} and \S \ref{sec:cohom} will not come as a surprise to the reader, but the framework is perhaps at variance with those usually adopted by both algebraic topologists and numerical analysts. 

That the dimension of cohomology groups of various finite element space is the ``right'' one, can be seen as an algebraic stability property. But the analysis of approximation of eigenvalues, of say the $\curl \curl$ operator relevant to electromagnetics, also requires metric properties such as the discrete compactness property already referred to. In order to obtain such estimates it has been argued that one should look for  interpolation operators which are projectors commuting with the exterior derivative, with enough continuity properties in terms of Sobolev norms. As already mentioned the standard interpolators lack suitable continuity. On the other hand so-called Cl\'ement interpolation \cite{Cle75} is defined on rough functions but is not inserted in a suitable commuting diagram. Variants designed to remedy on this have been designed, for instance in Bernardi-Girault \cite{BerGir98} and Girault-Scott \cite{GirSco03} but were not found to be directly transposable to our problem (though some of the techniques of proof we use are inspired by these papers). In this paper we construct other regularization operators. Unfortunately they do not commute with the exterior derivative and they do not leave the finite elements spaces invariant. However we can control, through an additional parameter, the amount by which these two properties fail, in appropriate norms. Composing with the standard interpolators gives us a tool sufficiently powerful for the goals we set for ourselves concerning stability and compactness properties of Hodge decompositions.



\paragraph{Organization.}

The paper is organized as follows: In \S \ref{sec:def} we provide the definitions of simplicial complexes we will work with and introduce a space of piecewise smooth differential forms satisfying a compatibility condition along interfaces. For brevity the latter will be referred to as compatible forms. We define lowest order finite elements as particular finite dimensional spaces of compatible forms and check that they correspond to simplicial cochains (Proposition \ref{prop:fe}). Then, in \S \ref{sec:cohom}, we prove that the inclusion of lowest order finite elements in the spaces of compatible forms, induces isomorphisms in cohomology (Theorem \ref{theo:cohom}). We then consider complexes which are intermediate between lowest order finite elements and general smooth compatible forms. Under natural assumptions involving interpolation operators, we prove that inclusions (and interpolators) induce isomorphisms in cohomology (Proposition \ref{prop:highorder}). We also prove that the standard high order finite-elements behave well under wedge products (Proposition \ref{prop:wedge}). Next, in \S \ref{sec:regul} we restrict attention to smooth compact manifolds (without boundary). We construct a regularization operator and use it to prove a discrete version of the Poincar\'e-Friedrich inequality (Proposition \ref{prop:discfried}) and Rellich compactness (Corollary \ref{cor:disccomp}).

\section{Definitions\label{sec:def}}
\subsection{Simplicial complexes}
A \emph{simplicial complex}  is a set $\calT$ of finite non-empty sets with the property:
\begin{equation}
\forall T \in \calT \ \forall T' \subset T  \quad T' \neq \emptyset \Rightarrow T'\in \calT.
\end{equation}
All simplicial complexes considered in this paper are themselves \emph{finite}, so we shall take the liberty of not repeating this additional requirement. A classical reference on simplicial complexes is Spanier \cite{Spa66} chapter 3. A more abstract approach is exposed in Gelfand-Manin \cite{GelMan03}. The non-empty elements of $\calT$ will be called \emph{simplexes}, and a simplex with $k+1$ elements will be said to be $k$-dimensional or to be a $k$-simplex. The elements of a simplex are called \emph{vertices}. For each integer $k$ we denote by $\calT^k$ the subset of $\calT$ consisting of $k$-dimensional simplexes. The set of vertices of (of elements of) $\calT$, which is the union of $\calT$, will be frequently be identified with $\calT^0$.  If $T$ is a simplex and $T'$ a subset of $T$ we say that $T'$ is a \emph{face} of $T$ ; if $T'$ is $l$-dimensional we also call $T'$ an $l$-face of $T$.

For integer $k$ the following notation is useful:
\begin{equation}
[k]=  \{0,1, \cdots, k\}.
\end{equation}
Given a $k$-simplex $T$, on the set of bijections $[k] \to T$, the relation defined by:
\begin{equation}
\tau \sim \tau' \iff \tau^{-1}\tau' \textrm{ is an even permutation of } [k],
\end{equation}
is an equivalence relation which has two equivalence classes when $k \geq 1$. An orientation of $T$ is the choice of such an equivalence class. We suppose that for each $k$-simplex $T$ we have chosen a bijection:
\begin{equation}
\sigma_T : [k] \to T.
\end{equation}
We will equip $T$ with the orientation induced by  $\sigma_T$ (its equivalence class).

An \emph{affine realization} of a simplex $T \in \calT^k$ is an injection:
\begin{equation}
\rho_T: T \to V,
\end{equation}
into an affine space $V$ such that the range $\rho_T(T)$ has a $k$-dimensional affine span in $V$. We allow $V$ to have dimension larger than $k$. The closed convex hull of $\rho_T(T)$ in $V$ will be denoted $|T|$ and also called an affine realization of $T$ when no confusion can arise as a consequence. An affine realization of a simplicial complex $\calT$ is the data consisting of an affine realization of each of its simplexes; it is denoted $|\calT|$. If $T,T'$ are simplexes in $\calT$ such that $T' \subset T$ there is a unique affine map $|T'| \to |T|$ which coincides with $\rho^{\phantom{-1}}_{T^{\phantom{\prime}}} \!\rho_{T'}^{-1}$ on $\rho_{T'}(T')$; it is injective and we call it the \emph{canonical injection} :
\begin{equation}
i_{TT'}:|T'| \to |T|.
\end{equation}
We notice that for any $T'' \subset T' \subset T$ we have :
\begin{equation}
i_{TT}= \id_{|T|} \myand i_{TT''}=i_{TT'} \circ i_{T'T''}.
\end{equation}
In the following we consider simplicial complexes equipped with one (and only one) affine realization. 

We denote by $\Omega^k(\calT)$ the collection of all families $(u_T)_{T\in \calT}$ such that for each $T\in \calT$, $u_T$ is a smooth differential $k$-form on $|T|$ (by this we mean a $k$-form having a smooth extension to the affine space generated by $|T|$), and such that for each $T,T' \in \calT$ with the property that $T' \subset T$, we have the following compatibility condition on the pull-backs:
\begin{equation}
(i_{TT'})^\star u_T = u_{T'}.
\end{equation}
The elements of $\Omega^k(\calT)$ will be simply referred to as \emph{compatible} $k$-forms (or just compatible forms).
The exterior derivative $\rmd$ and wedge-product $\wedge$, applied component-wise to compatible forms yield compatible forms since these operations behave naturally under pull-backs, see e.g. Lang \cite{Lan95} chapter V. 

If $u=(u_T)_{T\in \calT}\in \Omega^k(\calT)$, it will be convenient to call $u_T$  the \emph{restriction} of $u$ to $|T|$ and denote it by $u|_T$. If $\calT' \subset \cal T$ is also a simplicial complex we define the restriction of $u$ to $\calT'$ by:
\begin{equation}
u|_{\calT'}= (u_T)_{T\in \calT'}\in \Omega^k(\calT').
\end{equation}
It is sometimes convenient to associate with a simplex, the simplicial complex consisting of all its subsets. For instance if $T$ is a simplex embedded in an affine space, this enables us to speak of $\Omega^k(T)$. We notice that if $u\in \Omega^k(T)$ in the above sense, then $u$ is uniquely determined by $u|_T$, and that conversely any $k$-form on $|T|$ gives rise to an element of $\Omega^k(T)$. We denote by $\partial T$ the simplicial complex consisting of all strict subsets (called \emph{proper} faces) of $T$. 

The following definition will be useful when we construct regularizations. For a given $T \in \calT$ we denote by $\st(T)$ the \emph{star} of $T$, defined by:
\begin{equation}\label{eq:star}
\st(T)= \{ T' \in \calT \ : \ T \cap T' \neq \emptyset \}.
\end{equation}

\subsection{Finite elements}
We now define lowest order finite elements as finite dimensional subspaces of $\Omega^k(\calT)$. This is essentially a reformulation of Bossavit \cite{Bos88} and the expressions appearing here can at least be traced back to Weil \cite{Wei52}. Weil (following DeRham) considers a good cover of a manifold and a subordinated partition of unity consisting of smooth functions. A cover gives rise to a simplicial complex: the nerve of the cover. We, on the other hand, suppose that the simplicial complex is given and use it to obtain a partition of unity consisting of compatible piecewise smooth functions
.

For each vertex $i$, denote by $\lambda_i$ the element of $\Omega^0(\calT)$ such that for each simplex $T$ having $i$ as vertex $\lambda_i|_T$ is the barycentric coordinate map on $|T|$ relative to $i$, and $\lambda_i$ is $0$ on simplices not having $i$ as vertex. In other words $\lambda_i$ is the unique piecewise affine function on $|\calT|$ such that $\lambda_i(j) = \delta_{ij}$ for all $i,j \in \calT^0$. The linear span of the family $(\lambda_i)_{i\in \calT^0}$ is denoted $\Lambda^0(\calT)$.

For each integer $k \geq 1$ and $T\in \calT^k$, denote by $\lambda_T$ the compatible $k$-form:
\begin{equation} 
\lambda_T= k! \sum_{i=0}^{k} (-1)^i \lambda_{\sigma_T(i)} \rmd \lambda_{\sigma_T(0)} \wedge \cdots (\rmd \lambda_{\sigma_T(i)})^{\wedge} \cdots \wedge \rmd \lambda_{\sigma_T(k)},
\end{equation}
where the symbol $(\cdot)^{\wedge}$ signifies omission.
The linear span of the family  $(\lambda_T)_{T \in \calT^k}$ is denoted $\Lambda^k(\calT)$. It can alternatively be characterized as:
\begin{equation}
\myspan \{ \sum_{i=0}^{k} (-1)^i u_i \rmd u_0 \wedge \cdots  (\rmd u_i)^{\wedge} \cdots \wedge \rmd u_k \ : \ \forall i\in [k] \quad u_i \in \Lambda^0(\calT)\}.
\end{equation}
For any $k$ we denote by $\calP[k]$ the group of permutations of $[k]$, and by $\epsilon : \calP[k] \to \{-1, 1\}$ the signature morphism. We notice that for any functions $u_0, \cdots, u_k\in \Omega^0(\calT)$ we have:
\begin{equation}
k! \sum_{i=0}^{k} (-1)^i u_i \rmd u_0 \wedge \cdots (\rmd u_i)^{\wedge} \cdots \wedge \rmd u_k = \sum_{\tau \in \calP[k]} \epsilon(\tau) u_{\tau(0)} \rmd u_{\tau(1)} \wedge \cdots \wedge \rmd u_{\tau(k)}.
\end{equation}

We define the \emph{degree of freedom} associated with a $k$-simplex $T$ as the linear form $\mu_T$ on $\Omega^k(\calT)$ defined by :
\begin{equation}
\mu_T : u \mapsto \int_{|T|} u|_T,
\end{equation}
when $|T|$ is oriented by $\sigma_T$.

\begin{proposition}
For any two $k$-simplexes $T$ and $T'$, we have $\mu_{T'} (\lambda_{T})= \delta_{TT'}$, where the last symbol is the Kronecker delta.
\end{proposition}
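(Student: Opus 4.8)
The plan is to reduce the statement to an explicit description of the restriction $\lambda_T|_{T'} = (\lambda_T)_{T'}$ together with a computation of its integral, treating the off-diagonal case $T \neq T'$ and the diagonal case $T = T'$ separately. The starting observation is that, because $\rmd$ and $\wedge$ act component-wise on $\Omega^k(\calT)$, the $T'$-component of $\lambda_T$ is obtained simply by substituting the restrictions $\lambda_{\sigma_T(i)}|_{T'}$ into the defining formula for $\lambda_T$. The structural fact that drives the argument is that, directly from the definition of $\lambda_i$, one has $\lambda_i|_{T'} = 0$ (and hence $\rmd\lambda_i|_{T'} = 0$) whenever the vertex $i$ is not a vertex of $T'$. (The case $k=0$ is immediate, since then $\mu_{\{j\}}(\lambda_i) = \lambda_i(j) = \delta_{ij}$, so I would assume $k \geq 1$.)

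First I would dispose of the \emph{off-diagonal} case. If $T$ and $T'$ are distinct $k$-simplexes they both have cardinality $k+1$, so $T \not\subset T'$ and there is a vertex $a = \sigma_T(m)$ of $T$ with $a \notin T'$. Looking at the $k+1$ summands of $\lambda_T$, each one either carries the scalar factor $\lambda_a$ (the summand $i=m$) or carries $\rmd\lambda_a$ among its wedge factors (every summand $i \neq m$, since only $\rmd\lambda_{\sigma_T(i)}$ is omitted there). Since both $\lambda_a|_{T'}$ and $\rmd\lambda_a|_{T'}$ vanish, every summand restricts to $0$ on $|T'|$, whence $\lambda_T|_{T'} = 0$ and $\mu_{T'}(\lambda_T) = 0$. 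I would emphasize that this single combinatorial observation handles both the generic configuration and the borderline one in which $T$ and $T'$ share a common $(k-1)$-face.

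For the \emph{diagonal} case $T = T'$ I would restrict $\lambda_T$ to $|T|$, where the $k+1$ barycentric coordinates $\lambda_j := \lambda_{\sigma_T(j)}$ form a partition of unity, so that $\sum_{j} \rmd\lambda_j = 0$. Using $(\lambda_1, \ldots, \lambda_k)$ as a chart and writing $\rmd\lambda_0 = -\sum_{j\geq 1}\rmd\lambda_j$, a short antisymmetry computation shows that in the summand $i=j$ only the $-\rmd\lambda_j$ contribution of $\rmd\lambda_0$ survives in the wedge, and after reordering this summand equals $\lambda_j\,\rmd\lambda_1\wedge\cdots\wedge\rmd\lambda_k$; together with the summand $i=0$, which already equals $\lambda_0\,\rmd\lambda_1\wedge\cdots\wedge\rmd\lambda_k$, and using $\sum_j\lambda_j = 1$, this yields $\lambda_T|_T = k!\,\rmd\lambda_1\wedge\cdots\wedge\rmd\lambda_k$. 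I would then verify that the orientation of $|T|$ induced by $\sigma_T$ renders $(\lambda_1,\ldots,\lambda_k)$ a positively oriented chart, so that $\mu_T(\lambda_T) = k!\int_{|T|}\rmd\lambda_1\wedge\cdots\wedge\rmd\lambda_k = k!\cdot\vol\{\lambda_j \geq 0,\ \sum_{j\geq 1}\lambda_j \leq 1\} = k!\cdot(1/k!) = 1$.

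The routine parts are the two sign manipulations; the point requiring genuine care is the diagonal case, specifically the antisymmetrization exploiting $\sum_j\rmd\lambda_j = 0$ to collapse $\lambda_T|_T$ to a single top form, combined with the orientation bookkeeping that pins the normalizing constant to exactly $1$ rather than $\pm 1$ or $1/k!$. The off-diagonal case, being purely combinatorial, is the cleaner half and presents no real obstacle.
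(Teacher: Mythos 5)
Your proof is correct and takes essentially the same approach as the paper: the off-diagonal case by choosing a vertex $a \in T \setminus T'$ so that every summand of $\lambda_T$ carries either $\lambda_a$ or $\rmd\lambda_a$ and hence vanishes on $|T'|$, and the diagonal case by using the partition of unity to collapse $\lambda_T|_T$ to $k!\,\rmd\lambda_1 \wedge \cdots \wedge \rmd\lambda_k$ and integrating in barycentric coordinates. You merely spell out the antisymmetry computation and the orientation bookkeeping that the paper compresses into ``is 1 by the choice of orientation and normalization.''
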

\begin{proof}
First we remark that, since the family $(\lambda_i)_{i \in T}$ constitutes a partition of unity on $|T|$, we have:
\begin{equation}
(\lambda_T)|_T = k! \ (\rmd \lambda_{\sigma_T(1)} \wedge \cdots \wedge \rmd \lambda_{\sigma_T(k)})|_T.
\end{equation}
The integral of this $k$-form on $|T|$ can be computed in coordinates and is 1 by the choice of orientation and normalization.

Next we remark that if  $T' \neq T$ we can pick an $i \in T \setminus T'$. The function $\lambda_i$ is $0$ on $|T'|$ (as well as its exterior derivative), implying that $\lambda_{T}|_{T'}=0$. This completes the proof.
\end{proof}

Since this proposition implies linear independence of the family $(\lambda_T)_{T \in \calT^k}$ we obtain:
\begin{corollary}
The family $(\lambda_T)_{T \in \calT^k}$ is a basis for $\Lambda^k(\calT)$.
\end{corollary}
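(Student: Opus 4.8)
The plan is to exploit the biorthogonality relation just established in the Proposition. By the very definition of $\Lambda^k(\calT)$ as the linear span of the family $(\lambda_T)_{T \in \calT^k}$, this family spans $\Lambda^k(\calT)$; hence spanning is automatic and only linear independence remains to be checked. The two together give the basis property.

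For linear independence I would run the standard dual-system argument. Suppose a finite linear combination vanishes, say $\sum_{T \in \calT^k} c_T \lambda_T = 0$ in $\Omega^k(\calT)$, with scalars $c_T$. Each degree of freedom $\mu_{T'}$ is a linear form on $\Omega^k(\calT)$, so I may apply $\mu_{T'}$ to this identity. Using linearity together with the Proposition, which gives $\mu_{T'}(\lambda_T) = \delta_{TT'}$, the sum collapses to a single surviving term:
\begin{equation}
0 = \mu_{T'}\Big( \sum_{T \in \calT^k} c_T \lambda_T \Big) = \sum_{T \in \calT^k} c_T\, \mu_{T'}(\lambda_T) = c_{T'}.
\end{equation}
Since $T'$ ranges over all of $\calT^k$, every coefficient $c_{T'}$ vanishes, which proves independence and hence the claim.

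I do not expect any genuine obstacle here: the entire content is carried by the preceding Proposition, and the sentence introducing the corollary already signals this. The only points deserving care are purely bookkeeping: that the spanning family and the indexing set of the functionals are the same set $\calT^k$, and that each $\mu_{T'}$ is defined on all of $\Omega^k(\calT)$ (not merely on $\Lambda^k(\calT)$), so that applying it to the relation is legitimate. With these noted, the biorthogonality immediately yields linear independence, and as a by-product it shows that the forms $\mu_T$ restrict on $\Lambda^k(\calT)$ to the dual basis of $(\lambda_T)_{T \in \calT^k}$.
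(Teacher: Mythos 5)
Your proposal is correct and follows exactly the paper's reasoning: spanning holds by the definition of $\Lambda^k(\calT)$, and linear independence follows from the biorthogonality $\mu_{T'}(\lambda_T)=\delta_{TT'}$ of the preceding proposition, which the paper invokes in a single sentence and you merely spell out. No discrepancies to report.
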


For any $(k+1)$-simplex $T$ and any $k$-face $T'$ of $T$ we define the \emph{incidence number} $\epsilon(T,T')\in \{-1, 1 \}$ to be the sign of the permutation of $[k+1]$ defined as:
\begin{eqnarray}
  0 & \mapsto & \sigma_T^{-1}(0),\\
i+1 & \mapsto & \sigma_T^{-1}(\sigma_{T'}(i) + 1) \myfor i \in [k].
\end{eqnarray}
For all simplexes $T,\ T'$ not covered by this definition we put $\epsilon(T,T')=0$.

\begin{proposition}\label{prop:fe}
 The exterior derivative $\rmd$ maps $\Lambda^k(\calT)$ into $\Lambda^{k+1}(\calT)$ and the matrix of $\rmd : \Lambda^k(\calT) \to \Lambda^{k+1}(\calT)$ in the bases $(\lambda_{T'})_{T'\in \calT^k} \to (\lambda_T)_{T\in \calT^{k+1}}$ has entry $\epsilon(T,T')$ at the indices $(T,T')\in \calT^{k+1} \times \calT^k$.
\end{proposition}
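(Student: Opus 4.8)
The plan is to prove the two assertions in turn: first that $\rmd$ carries $\Lambda^k(\calT)$ into $\Lambda^{k+1}(\calT)$, and then that the matrix entries are the incidence numbers. For the containment I would work from the alternative description of $\Lambda^k(\calT)$ as the span of the forms $\omega = \sum_{i=0}^k (-1)^i u_i\, \rmd u_0 \wedge \cdots (\rmd u_i)^{\wedge} \cdots \wedge \rmd u_k$ with $u_i \in \Lambda^0(\calT)$. Since each factor is exact, a short computation collapses all $k+1$ summands to the same top term and gives $\rmd\omega = (k+1)\, \rmd u_0 \wedge \cdots \wedge \rmd u_k$. It then suffices to realize this $(k+1)$-form as a generator of $\Lambda^{k+1}(\calT)$: adjoining the constant function $1$, which lies in $\Lambda^0(\calT)$ because $\sum_i \lambda_i = 1$, as a zeroth argument $v_0 = 1$ and putting $v_j = u_{j-1}$, the generator $\sum_{j=0}^{k+1}(-1)^j v_j\, \rmd v_0 \wedge \cdots (\rmd v_j)^{\wedge} \cdots \wedge \rmd v_{k+1}$ reduces to exactly $\rmd u_0 \wedge \cdots \wedge \rmd u_k$, since every term retaining the factor $\rmd v_0 = 0$ drops out and only the $j=0$ term survives. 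By linearity this yields $\rmd(\Lambda^k(\calT)) \subseteq \Lambda^{k+1}(\calT)$.

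For the coefficients I would use the biorthogonality $\mu_T(\lambda_{T''}) = \delta_{TT''}$ together with the fact that $(\lambda_T)_{T\in\calT^{k+1}}$ is a basis of $\Lambda^{k+1}(\calT)$. As $\rmd\lambda_{T'}$ now lies in this space, its coordinate on $\lambda_T$ is precisely $\mu_T(\rmd\lambda_{T'})$, so it remains to show $\mu_T(\rmd\lambda_{T'}) = \epsilon(T,T')$. By definition $\mu_T(\rmd\lambda_{T'}) = \int_{|T|} \rmd(\lambda_{T'}|_T)$, and Stokes' theorem on the affine $(k+1)$-simplex $|T|$, whose boundary carries the orientation induced by $\sigma_T$, rewrites this as $\int_{\partial|T|} \lambda_{T'}$. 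The boundary is the union of the realizations $|T''|$ of the $k$-faces $T''$ of $T$, and on each such face $\lambda_{T'}|_{T''}$ vanishes unless $T'' = T'$: if $T'' \neq T'$ there is a vertex of $T'$ outside $T''$, whose barycentric function and differential both vanish on $|T''|$, exactly the vanishing argument used to prove $\mu_{T'}(\lambda_T) = \delta_{TT'}$. Hence the integral is $0$ unless $T'$ is a $k$-face of $T$, which matches $\epsilon(T,T') = 0$ in the non-incident case, and otherwise it reduces to $\int_{|T'|}\lambda_{T'}$ taken with the boundary orientation.

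The last step is to compare this surviving integral with $\mu_{T'}(\lambda_{T'}) = 1$, the same integral taken with the orientation $\sigma_{T'}$: the two differ by the sign recording whether the boundary orientation of $|T'|$ as a face of $|T|$ agrees with $\sigma_{T'}$, so $\mu_T(\rmd\lambda_{T'})$ equals precisely that sign. Identifying this orientation sign with the sign of the permutation defining $\epsilon(T,T')$ is the delicate point where all the bookkeeping lives, and the step I expect to be the main obstacle. It is pure orientation combinatorics: writing $j = \sigma_T^{-1}(v)$ for the position in $T$ of the omitted vertex $v$, the induced boundary orientation on the face opposite $v$ is $(-1)^j$ relative to the order $\sigma_T$ inherits on $T'$, while passing from that inherited order to $\sigma_{T'}$ contributes the sign of the reordering, and the product of the two is the sign of the permutation of $[k+1]$ in the definition. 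I would make this transparent by decomposing that permutation into the cyclic shift bringing $v$ into position $0$ (contributing $(-1)^j$) composed with the reindexing of the face vertices by $\sigma_T^{-1}\sigma_{T'}$, and checking the bookkeeping against the standard boundary-orientation formula rather than computing the sign by hand.
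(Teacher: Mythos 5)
Your proof is correct, but it reaches the result by a genuinely different route from the paper's. The paper proves both assertions in one stroke by pure multilinear algebra: it inserts the partition of unity $1=\sum_j u_j$ into $\rmd u_0 \wedge \cdots \wedge \rmd u_k$, regroups the resulting terms into Whitney-type generators, and reads the incidence signs $\epsilon(T,T')$ directly off the reindexing $(j,0,\dots,k)\to(0,\dots,k+1)$, arriving at the explicit identity $\rmd\lambda_{T'}=\sum_T \epsilon(T,T')\lambda_T$ with no integration and no orientation conventions. You instead split the statement: for the containment you combine $\rmd\omega=(k+1)\,\rmd u_0\wedge\cdots\wedge\rmd u_k$ with the adjoin-the-constant trick $v_0=1\in\Lambda^0(\calT)$, which is a slick repackaging of the paper's computation (their expansion is precisely your substitution $v_0=\sum_j u_j$ carried out term by term), resting on the alternative characterization of $\Lambda^{k+1}(\calT)$ that the paper states in \S 2.2; for the coefficients you use the biorthogonality $\mu_T(\lambda_{T''})=\delta_{TT''}$ and Stokes' theorem, correctly disposing of the non-incident case and reducing the incident case to comparing the boundary orientation of $|T'|\subset\partial|T|$ with the orientation $\sigma_{T'}$. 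This is exactly where the two approaches trade costs: the paper keeps all sign bookkeeping inside permutation algebra, while you must invoke the standard fact that the facet opposite the vertex in position $j$ carries $(-1)^j$ times the inherited orientation -- a fact you flag honestly and decompose correctly (cyclic shift contributing $(-1)^j$, composed with the reindexing by $\sigma_T^{-1}\sigma_{T'}$, matching the permutation defining $\epsilon(T,T')$). A pleasant by-product of your route is that the identity $\mu_T(\rmd\lambda_{T'})=\epsilon(T,T')$ is precisely what the paper proves \emph{afterwards} to show that the interpolators $\Pi_\calT^\bullet$ commute with $\rmd$; you obtain that proposition simultaneously, whereas the paper derives it from Proposition \ref{prop:fe} by the same Stokes argument you use, and in exchange its own proof of Proposition \ref{prop:fe} remains self-contained at the algebraic level.
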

\begin{proof}
First we remark that:
\begin{equation}
\rmd \lambda_{T'} = (k+1)!\ \rmd \lambda_{\sigma_{T'}(0)} \wedge \cdots \wedge \rmd \lambda_{\sigma_{T'}(k)}.
\end{equation}
Next we make the following computation. Let $(u_i)_{i\in I}$ denote any family of functions in $\Omega^0(\calT)$ constituting a partition of unity of $|\calT|$ indexed by a finite set $I$ containing $[k]$ as a subset. Choose a $k<n$. We have:
\begin{eqnarray*}
& &\rmd u_0 \wedge \cdots \wedge \rmd u_k\\
&=&\sum_{j\not \in [k]} u_j \rmd u_0 \wedge \cdots \wedge \rmd u_k + \sum_{i \in [k]} u_i \rmd u_0 \wedge \cdots \wedge \rmd u_k\\
&=&\sum_{j\not \in [k]} u_j \rmd u_0 \wedge \cdots \wedge \rmd u_k + \sum_{i \in [k]} u_i \rmd u_0 \wedge \cdots \big(\rmd (1- \sum_{j \not \in [k]} u_j)\big)_{\mathrm{at}\ i} \cdots \wedge \rmd u_k\\
&=&\sum_{j\not \in [k]} \Big( u_j \rmd u_0 \wedge \cdots \wedge \rmd u_k + \sum_{i \in [k]} (-1)^{i+1} u_i \rmd u_j \wedge \rmd u_0 \wedge \cdots (\rmd u_i)^\wedge \cdots \wedge \rmd u_k \Big)
\end{eqnarray*}
If we change the index tuple $(j,0,1, \cdots ,k)$ to $(0,1, \cdots, k+1)$ and remark that for any family $(u_i)_{i\in [k+1]}$ of elements of $\Omega^0(\calT)$ we have:
\begin{eqnarray*}
& &u_0 \rmd u_1 \wedge \cdots \wedge \rmd u_{k+1} + \sum_{i \in [k]} (-1)^{i+1} u_{i+1} \rmd u_0 \wedge \cdots (\rmd u_{i+1})^\wedge \cdots \wedge \rmd u_{k+1}\\
&=&\sum_{i \in [k+1]} (-1)^{i} u_{i} \rmd u_0 \wedge \cdots (\rmd u_{i})^\wedge \cdots \wedge \rmd u_{k+1},
\end{eqnarray*}
then, comparing the signs appearing here with the definition of incidence number gives:
\begin{equation}\label{eq:dlambda}
\rmd \lambda_{T'} = \sum_{T\in \calT^{k+1}}\epsilon(T,T')\lambda_T.
\end{equation}
This completes the proof.
\end{proof}

\begin{remark}\label{rem:order}
Suppose that the set $\calT^0$ has been given a total ordering and that for each $T\in \calT^k$ one chooses the bijection $\sigma_T: [k]\to T$ to be the one which is increasing. Pick $T\in \calT^{k+1}$ and a $k$-face $T'$ of $T$. Let $l\in [k+1]$ denote the integer such that:
\begin{equation}
\{\sigma_T(l)\}= T \setminus T'. 
\end{equation}
 Then:
\begin{equation}
\epsilon(T,T')= (-1)^l.
\end{equation}
\end{remark}

We thus have a diagram :
\begin{equation}\label{eq:firstdiagram}
\begin{array}{ccccccccc}
0 & \to & \Omega^0(\calT) & \to  &\Omega^1(\calT) & \to & \Omega^2(\calT) & \to & \cdots\\
\uparrow& &\uparrow& &\uparrow& &\uparrow& & \\
0 & \to & \Lambda^0(\calT) & \to  &\Lambda^1(\calT) & \to & \Lambda^2(\calT) & \to & \cdots
\end{array}
\end{equation}
where the horizontal arrows represent exterior derivatives and the vertical arrows are inclusion mappings. We refer the reader to Appendix \ref{sec:compl} for some notions of homological algebra which will be applied to the above diagram.  Since $\rmd \circ \rmd = 0$, each row is a \emph{complex}. The vertical arrows of course commute with the exterior derivative, providing a \emph{morphism of complexes}. The \emph{cohomology group} $\rmH^k \Omega^\bullet(\calT)$ is defined as the quotient of the kernel of $\rmd:\Omega^k(\calT) \to\Omega^{k+1}(\calT)$ by the range of $\rmd:\Omega^{k-1}(\calT) \to \Omega^k(\calT)$. The cohomology of $\Lambda^\bullet (\calT)$ is defined similarly, and one checks that morphisms of complexes induce morphisms between cohomology groups.

\begin{remark} By  Proposition \ref{prop:fe} and Remark \ref{rem:order}, it follows that the complex $\Lambda^\bullet(\calT)$ is isomorphic to the simplicial cochain complex defined for instance in Gelfand-Manin \cite{GelMan03} \S I.4.1. Said differently, the simplicial cochain complex is the coordinate expression of $\Lambda^\bullet(\calT)$ in the bases $(\lambda_T)_{T\in \calT^k}$ when the simplexes are oriented in a certain way.\end{remark}

So-called interpolators $\Pi_\calT^k :\Omega^k(\calT) \to \Lambda^k(\calT)$ are constructed by assigning to $u\in\Omega^k(\calT)$ the unique element $v\in \Lambda^k(\calT)$ such that for each $T\in \calT^k$:
\begin{equation}\label{eq:interpol}
\mu_T v = \mu_T u.
\end{equation}
Evidently, these are projections. Moreover we have:
\begin{proposition}
The interpolators $\Pi_\calT^\bullet$ commute with the exterior derivative, i.e. we have commuting diagrams:
\begin{equation}
\begin{array}{ccc}
\Omega^k(\calT)& \witharrowunder{\rmd} & \Omega^{k+1}(\calT)\\
\witharrowright{\Pi_\calT^k}& & \witharrowright{\Pi_\calT^{k+1}}\\
\Lambda^k(\calT)& \witharrowunder{\rmd} & \Lambda^{k+1}(\calT)
\end{array}
\end{equation}
\end{proposition}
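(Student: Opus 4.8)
The plan is to exploit the fact that an element of $\Lambda^{k+1}(\calT)$ is completely determined by its degrees of freedom. Indeed, the relation $\mu_{T'}(\lambda_T)=\delta_{TT'}$ together with the fact that $(\lambda_T)_{T\in\calT^{k+1}}$ is a basis of $\Lambda^{k+1}(\calT)$ shows that every $v\in\Lambda^{k+1}(\calT)$ satisfies $v=\sum_{T\in\calT^{k+1}}\mu_T(v)\,\lambda_T$, so that $v$ is characterized by the numbers $\mu_T(v)$. Since $\rmd$ maps $\Lambda^k(\calT)$ into $\Lambda^{k+1}(\calT)$ by Proposition~\ref{prop:fe}, both $\rmd\Pi_\calT^k u$ and $\Pi_\calT^{k+1}\rmd u$ belong to $\Lambda^{k+1}(\calT)$, and it therefore suffices to prove that they share the same degrees of freedom, i.e. $\mu_T(\rmd\Pi_\calT^k u)=\mu_T(\Pi_\calT^{k+1}\rmd u)$ for every $T\in\calT^{k+1}$. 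By the defining property \eqref{eq:interpol} of the interpolator the right-hand side equals $\mu_T(\rmd u)$, so the whole statement reduces to the single identity $\mu_T(\rmd\Pi_\calT^k u)=\mu_T(\rmd u)$.

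The bridge between the two sides is a discrete Stokes identity: for every $w\in\Omega^k(\calT)$ and every $T\in\calT^{k+1}$,
\[
\mu_T(\rmd w)=\sum_{T'\in\calT^k}\epsilon(T,T')\,\mu_{T'}(w).
\]
Granting this, the proof finishes at once: applying it to $w=\Pi_\calT^k u$ and to $w=u$ yields two expressions that differ only through the factors $\mu_{T'}(\Pi_\calT^k u)$ and $\mu_{T'}(u)$, which coincide for every $T'\in\calT^k$ by \eqref{eq:interpol}; hence $\mu_T(\rmd\Pi_\calT^k u)=\mu_T(\rmd u)$. Equivalently, one may write $\Pi_\calT^k u=\sum_{T'}\mu_{T'}(u)\,\lambda_{T'}$, differentiate using $\rmd\lambda_{T'}=\sum_T\epsilon(T,T')\lambda_T$ from \eqref{eq:dlambda}, and compare with $\Pi_\calT^{k+1}\rmd u=\sum_T\mu_T(\rmd u)\,\lambda_T$; this reduces to exactly the same identity.

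It remains to establish the discrete Stokes identity, which is where the actual work lies. Since the exterior derivative commutes with pull-back we have $(\rmd w)|_T=\rmd(w|_T)$, so by the classical Stokes theorem on the oriented simplex $|T|$ one gets $\mu_T(\rmd w)=\int_{|T|}\rmd(w|_T)=\int_{\partial|T|}w|_T$. The topological boundary $\partial|T|$ is the union of the realizations $|T'|$ of the $k$-faces $T'$ of $T$, each carrying the orientation induced from $|T|$, and by the compatibility condition $(i_{TT'})^\star w|_T=w|_{T'}$ the boundary integral decomposes as a sum of the integrals of $w|_{T'}$ over these faces in their induced orientations. The main obstacle is thus purely one of orientation bookkeeping: I must check that the orientation of $|T'|$ induced as a face of $|T|$ oriented by $\sigma_T$ equals $\epsilon(T,T')$ times the orientation given by $\sigma_{T'}$. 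This is precisely the geometric content encoded in the definition of the incidence number, and it can be verified in barycentric coordinates on $|T|$: deleting the vertex $\sigma_T(l)$ produces the boundary sign $(-1)^l$, in agreement with Remark~\ref{rem:order} and hence with $\epsilon(T,T')$. Inserting these signs converts the sum over faces into $\sum_{T'}\epsilon(T,T')\,\mu_{T'}(w)$, giving the desired identity and completing the proof.
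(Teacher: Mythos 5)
Your proof is correct and follows essentially the same route as the paper: both hinge on Stokes' theorem on each oriented $(k+1)$-simplex, with the relative orientations of the faces accounted for by the incidence numbers $\epsilon(T,T')$, combined with equation (\ref{eq:dlambda}) from Proposition \ref{prop:fe}. You merely spell out in more detail what the paper compresses into the phrase ``taking into account relative orientations,'' including the reduction to matching degrees of freedom and the statement of the discrete Stokes identity for a general compatible form.
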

\begin{proof}
Application of Stokes theorem in a simplex $T$, taking into account relative orientations, gives:
\begin{eqnarray}
\int_T \rmd \lambda_{T'} &=& \sum_{T''} \epsilon(T, T'') \int_{T''}\lambda_{T'},\\
& = & \epsilon(T,T').
\end{eqnarray}
By Proposition \ref{prop:fe} (specifically equation (\ref{eq:dlambda})) this gives the desired result.
 \end{proof}

\section{Cohomology\label{sec:cohom}}

 We will now show that the inclusion mappings (vertical arrows) in (\ref{eq:firstdiagram}) induce \emph{isomorphisms} between cohomology groups. Then we will examine intermediate complexes appearing in finite element theory.

\subsection{An isomorphism in cohomology}

We examine first the properties of a single simplex. 

First we recall the construction of homotopy operators. Their efficiency in the context of finite elements was noticed by Hiptmair \cite{Hip99}.

Let $T$ be a $p$-simplex equipped with an affine realization $\rho_T : T \to V$. Let $x_0$ be one of its vertices and define functions $F_t : V \to V$ by:
\begin{equation}
F_t(x) = x_0 + t ( x - x_0),
\end{equation}
Define also a vector field $X$ on $V$ by:
\begin{equation}
X(x) = x -x_0.
\end{equation}
Introduce finally $A$, a map taking $k$-forms on $|T|$ to $k-1$-forms on $|T|$, defined by (here $v \lfloor X$ is the contraction of the form $v$ by the vectorfield $X$):
\begin{equation}
A v = \int_0^1 F_t^\star ( v \lfloor X) \rmd t.
\end{equation}
Then it is standard procedure to remark that for any function $u$ on $|T|$:
\begin{equation}
u - u(x_0) = A \rmd u,
\end{equation}
and if $u$ is a $k$-form on $|T|$ with $k \geq 1$:
\begin{equation}
u = A \rmd u + \rmd A u.
\end{equation}
As a consequence we obtain:
\begin{proposition}
We have $\dim \rmH^0 \Omega^\bullet (T) = 1$ and for $k \geq 1$, $\rmH^k \Omega^\bullet (T) = 0$.
\end{proposition}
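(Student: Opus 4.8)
The plan is to reduce the statement to the Poincar\'e lemma for the convex set $|T|$ and then read off the two cohomology groups directly from the homotopy formulas recorded just above the statement. First I would invoke the identification noted earlier in \S\ref{sec:def}: an element of $\Omega^k(T)$ is completely determined by its restriction $u|_T$, a smooth $k$-form on $|T|$, and conversely every $k$-form on $|T|$ extends to a compatible family. Hence, as a complex, $\Omega^\bullet(T)$ is nothing but the de Rham complex of smooth forms on $|T|$, and it suffices to compute its cohomology. Since $x_0$ is a vertex of the convex set $|T|$, the map $F_t$ sends $|T|$ into itself for every $t\in[0,1]$, so the homotopy operator $A$ is well defined as a map $\Omega^k(T)\to\Omega^{k-1}(T)$ and the two displayed identities hold throughout $|T|$.

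For the case $k\geq 1$ I would proceed as follows. Let $u\in\Omega^k(T)$ be closed, i.e. $\rmd u = 0$. The homotopy formula $u = A\rmd u + \rmd A u$ then collapses to $u = \rmd(Au)$, exhibiting $u$ as the exterior derivative of the $(k-1)$-form $Au\in\Omega^{k-1}(T)$. Thus every closed $k$-form is exact, so the kernel of $\rmd$ on $\Omega^k(T)$ coincides with the range of $\rmd$ on $\Omega^{k-1}(T)$, and $\rmH^k\Omega^\bullet(T)=0$.

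For $k=0$ I would use the function version of the identity. A closed $0$-form is a function $u$ with $\rmd u = 0$; then $u - u(x_0) = A\rmd u = 0$, so $u$ equals the constant $u(x_0)$. Consequently the kernel of $\rmd:\Omega^0(T)\to\Omega^1(T)$ is exactly the line of constant functions. As there are no forms of negative degree, the range of the incoming differential is $\{0\}$, so $\rmH^0\Omega^\bullet(T)$ equals the space of constants, which is one-dimensional; hence $\dim\rmH^0\Omega^\bullet(T)=1$.

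There is essentially no serious obstacle once the homotopy formulas are granted; the only points deserving a word of care are the well-definedness of $A$ as an operator on $\Omega^\bullet(T)$ --- guaranteed by the convexity of $|T|$ together with $x_0\in|T|$, which keeps the entire homotopy inside the simplex --- and the bookkeeping that $\Omega^\bullet(T)$ really is the de Rham complex of $|T|$ rather than some larger object. Both are immediate from the definitions of \S\ref{sec:def}, so the argument is a direct application of the two identities preceding the statement.
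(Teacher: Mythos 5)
Your argument is correct and is essentially the paper's own: the proposition is stated there as an immediate consequence of the two homotopy identities $u - u(x_0) = A\rmd u$ and $u = A\rmd u + \rmd A u$, which is exactly how you derive both the vanishing for $k \geq 1$ and the one-dimensionality of $\rmH^0$. Your added remarks --- that $\Omega^\bullet(T)$ is just the de Rham complex of $|T|$ and that $A$ is well defined since $F_t$ preserves $|T|$ (indeed $F_t$ is defined on the whole affine space $V$, so it also respects the smooth extensions required by the definition of $\Omega^k(T)$) --- merely make explicit what the paper leaves tacit.
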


As noticed by Hiptmair  (even for higher order polynomials), by the special choice of homotopy $F_t$, $A$ also maps $\Lambda^k(T)$ into $\Lambda^{k-1}(T)$, and it follows that $\dim \rmH^0 \Lambda^\bullet (T) = 1$ and for $k \geq 1$, $\rmH^0 \Lambda^\bullet (T) = 0$. For the purposes of future reference we therefore state the trivial consequence:
\begin{proposition}
The inclusions $\Lambda^k(T) \to \Omega^k(T)$ induce isomorphisms\\
$\rmH^k \Lambda^\bullet (T) \to \rmH^k \Omega^\bullet (T)$.
\end{proposition}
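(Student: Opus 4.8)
The plan is to read the statement off degree by degree from the two preceding propositions, using only that the inclusion $\iota^k : \Lambda^k(T) \to \Omega^k(T)$ commutes with $\rmd$. This is immediate, since $\rmd$ is literally the same operator on both spaces, so $\iota^\bullet$ is a morphism of complexes and therefore does induce maps $\rmH^k \Lambda^\bullet(T) \to \rmH^k \Omega^\bullet(T)$ in the first place.

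For $k \geq 1$ there is nothing to do: the previous two propositions give $\rmH^k \Omega^\bullet(T) = 0$ and $\rmH^k \Lambda^\bullet(T) = 0$, so the induced map is the zero map between trivial spaces, hence an isomorphism.

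The only case requiring an argument is $k = 0$. Since both complexes start in degree $0$, there is no incoming differential, so $\rmH^0$ is in each case exactly the kernel of $\rmd$ acting on $0$-forms. As $|T|$ is convex, hence connected, a smooth function $u$ on $|T|$ with $\rmd u = 0$ is constant; thus $\rmH^0 \Omega^\bullet(T)$ is the line of constant functions, in agreement with the computed dimension $1$. I would then observe that the constant function $1$ already lies in $\Lambda^0(T)$: the barycentric coordinates form a partition of unity, $\sum_{i \in T} \lambda_i = 1$, and the right-hand side belongs to $\Lambda^0(T)$ by definition. Hence the generator of $\rmH^0 \Lambda^\bullet(T)$ is carried by $\iota^0$ to the generator of $\rmH^0 \Omega^\bullet(T)$, and a linear map between one-dimensional spaces sending a basis vector to a basis vector is an isomorphism.

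There is essentially no obstacle here, which is why the authors label it a trivial consequence; the only point that genuinely uses the structure of $\Lambda^\bullet$ rather than a bare dimension count is the degree-$0$ fact that the constant function lies in $\Lambda^0(T)$, since matching dimensions alone would not force the specific inclusion-induced map to be an isomorphism. Alternatively one could package the argument homologically: the contracting homotopy $A$ restricts to $\Lambda^\bullet(T)$ and satisfies the same identities $u = A \rmd u + \rmd A u$ and $u - u(x_0) = A \rmd u$ on both complexes, exhibiting $\iota^\bullet$ as a morphism between two complexes each contracted by $A$ onto $\bbR$ in degree $0$, and hence as a quasi-isomorphism directly; but the degreewise check above is shorter.
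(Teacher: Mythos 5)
Your proof is correct and takes essentially the same route as the paper, whose entire argument is the remark that only the case $k=0$ needs to be considered and is straightforward: you simply fill in that straightforward case, correctly identifying $\rmH^0$ on both sides with the constants via $\sum_{i\in T}\lambda_i = 1$ and connectedness of $|T|$. Your observation that a dimension count alone would not suffice, and that one must check the inclusion carries generator to generator, is exactly the right point to make explicit.
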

\begin{proof}
Only the case $k=0$ needs to be considered, and it is straightforward.
\end{proof}

We will also need a similar result for the boundaries of simplexes. 
First we state the following:
\begin{proposition}\label{prop:extension}
The restriction map $\Omega^k(T) \to \Omega^k(\partial T)$ is onto, i.e. compatible piece-wise smooth $k$-forms on boundaries of simplexes can be extended to the interior.
\end{proposition}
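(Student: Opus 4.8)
The plan is to build the extension one facet at a time, reducing everything to a single elementary extension lemma. Label the vertices of $T$ by $0,\dots,p$, let $\lambda_0,\dots,\lambda_p$ be the barycentric coordinates on $|T|$, and let $F_i$ be the facet opposite vertex $i$, so that $|F_i|=\{\lambda_i=0\}$. A compatible form $u\in\Omega^k(\partial T)$ is determined by its facet restrictions $u_i=u|_{F_i}$: every proper face $T'$ lies in some facet $F_i$ (any $i\notin T'$), so compatibility gives $u|_{T'}=(u_i)|_{T'}$. Hence it suffices to produce a genuine smooth $k$-form $w$ on $|T|$ with $w|_{F_i}=u_i$ for every $i$; such a $w$ automatically restricts compatibly to all faces. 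The $u_i$ are of course constrained: the compatibility of $u$ says precisely that $u_i$ and $u_j$ pull back to the same form on the common face $F_i\cap F_j$.

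The heart of the argument is the following lemma. Fix a facet $F_m$ and a set $S$ of indices with $m\notin S$, and suppose $e$ is a smooth $k$-form on $F_m$ whose pullback to $F_m\cap F_l$ vanishes for every $l\in S$. Then there is a smooth $k$-form $c$ on $|T|$ with $c|_{F_m}=e$ and $c|_{F_l}=0$ for all $l\in S$. To build $c$, I would use the radial projection from vertex $m$ onto the opposite facet,
\[
r(\lambda_0,\dots,\lambda_p)=\tfrac{1}{1-\lambda_m}(\lambda_0,\dots,\lambda_{m-1},0,\lambda_{m+1},\dots,\lambda_p),
\]
which is smooth on $|T|\setminus\{v_m\}$, fixes $F_m$ pointwise, and — since it kills $\lambda_m$ while preserving the vanishing of every other $\lambda_l$ — maps each $F_l$ $(l\neq m)$ into $F_l\cap F_m$. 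Setting $c=\psi(\lambda_m)\,r^\star e$, with $\psi$ a cutoff equal to $1$ near $\lambda_m=0$ and to $0$ near $\lambda_m=1$, gives $c|_{F_m}=e$ (as $\psi\equiv1$ and $r=\id$ on $F_m$); and for $l\in S$ the form $r^\star e$ pulls back on $F_l$ to the pullback of $e$ on $F_l\cap F_m$, which is zero by hypothesis, so $c|_{F_l}=0$.

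With the lemma in hand I would extend across the facets successively, constructing $w_0,\dots,w_p$ on $|T|$ with $w_j|_{F_l}=u_l$ for all $l\le j$. First extend $u_0$ off $F_0$ (the lemma with $S=\varnothing$) to get $w_0$. Given $w_j$, consider the error $e_{j+1}=u_{j+1}-w_j|_{F_{j+1}}$ on $F_{j+1}$. On each intersection $F_{j+1}\cap F_l$ with $l\le j$, compatibility of $u$ makes $u_{j+1}$ and $u_l$ agree, while $w_j|_{F_{j+1}}$ and $w_j|_{F_l}=u_l$ also agree there; thus $e_{j+1}$ pulls back to $0$ on every $F_{j+1}\cap F_l$, $l\le j$. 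This is exactly the hypothesis of the lemma with $m=j+1$ and $S=\{0,\dots,j\}$, which yields a correction $c_{j+1}$ restoring the value on $F_{j+1}$ without disturbing $F_0,\dots,F_j$; set $w_{j+1}=w_j+c_{j+1}$. The final form $w=w_p$ matches $u$ on every facet, hence on every proper face, and is the desired extension.

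The one delicate point, and the step I expect to require the most care, is the smoothness of $c$ across the vertex $v_m$, where $r$ is singular. This is handled precisely by the cutoff $\psi(\lambda_m)$: since $v_m\notin F_m$ one may keep $\psi\equiv1$ on a neighbourhood of $F_m$ while forcing $\psi\equiv0$ on a neighbourhood of $v_m$, so that $\psi(\lambda_m)\,r^\star e$ extends smoothly by zero across $v_m$. The hypothesis that $e$ vanishes on the intersections is exactly what upgrades a bare one-facet extension into one that is compatible with all the remaining facets.
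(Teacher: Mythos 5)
Your proof is correct, and the elementary device at its heart is the same one the paper uses: pull back along the barycentric radial projection onto a face and multiply by a cutoff vanishing near the opposite face, so that the singular locus of the projection is cut away and pullbacks to other faces vanish because the projection factors through where the data vanish. Where you genuinely diverge is in the decomposition of the problem. The paper sweeps the skeleton by increasing dimension: for each face $T'$ of each dimension it sets $E_{T'}u=\phi_{T'}\,F_{T'}^{\star}(u|_{T'})$ (with $F_{T'}$ the projection away from the opposite face and $\phi_{T'}$ assembled from a partition of unity), and at stage $l$ it corrects the residual on all $(l+1)$-faces simultaneously, the needed hypothesis being that the residual vanishes on the entire $l$-skeleton, i.e.\ on $\partial T'$ for each face being treated. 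You instead begin with the observation -- not made explicit in the paper -- that only codimension-one data matter: every proper face lies in some facet, so a single smooth form on $|T|$ matching $u$ on all $p+1$ facets is automatically a compatible extension. You then sweep the facets one at a time, and your key lemma asks the residual on $F_{j+1}$ to vanish only on the pairwise intersections $F_{j+1}\cap F_l$ for $l\le j$, which compatibility of $u$ together with the previous steps supplies for free (the degenerate case $p=1$, where these intersections are empty, is covered by your cutoff killing a neighbourhood of $v_m$). This buys leaner bookkeeping: a single type of projection (from the opposite vertex), a cutoff that is just $\psi(\lambda_m)$ rather than a subordinated partition of unity, only $p+1$ correction steps, and no need to verify that an extension from one face does not disturb the other faces of the same dimension. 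The paper's dimension-by-dimension scheme, for its part, treats all faces symmetrically and follows the standard skeleton-induction pattern that recurs elsewhere in finite element constructions, where data are naturally attached to faces of every dimension; your argument exploits the special combinatorics of a simplex boundary, which is exactly what this proposition permits. One small point to make fully rigorous in your write-up: the paper's notion of smoothness on $|T|$ requires a smooth extension to the affine span, and your construction delivers this verbatim, since $\psi(\lambda_m)\,r^{\star}e$ is smooth on the span of $|T|$ away from the hyperplane $\{\lambda_m=1\}$ and the cutoff extends it by zero across that hyperplane, not merely across the vertex $v_m$.
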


\begin{proof}
For $i\in [k]$ put $x_i = \rho_T \sigma_T (i)$. In the following $T$ will often be identified with the set $\{x_0, \cdots, x_k\}$, enabling us for instance to call the points $x_i$ the vertices of $T$. Let $U$ be the affine subspace of $V$ generated by $|T|$ and $U_i$ be the complement in $U$ of the affine span of the face of $T$ opposite to $x_i$. Let $(\phi_i)$ denote a partition of unity of a neighborhood of $|T|$ in $U$, subordinated to the open covering $(U_i)$. Thus $\phi_i$ is $1$ on a neighborhood of $x_i$ and $0$ on a neighborhood of the affine realization of the face of $T$ opposite to $x_i$. 

Pick an $l$-face $T'$ of $T$ and denote by $T''$ the opposite face $T \setminus T'$. Let $x_{i_0}, \cdots, x_{i_l}$ denote the vertices of $T'$. In this setting we (temporarily) denote the barycentric coordinate on $T$ associated with the vertex $x_i$ by $\lambda_i$, and define $F_{T'}$ to be the map $|T|\setminus |T''| \to |T'|$ defined in barycentric coordinates by:
\begin{equation}
(\lambda_0, \cdots, \lambda_p) \mapsto (\lambda_{i_0}, \cdots, \lambda_{i_l})/(\lambda_{i_0}+ \cdots +\lambda_{i_l}).
\end{equation}
We also define $\phi_{T'}$ to be the function:
\begin{equation}
\phi_{T'}= \phi_{i_0}+ \cdots + \phi_{i_l}.
\end{equation}
Remark that $\phi_{T'}$ is $1$ on a neighborhood of $|T'|$ and $0$ on a neighborhood of $|T''|$.

Pick $u\in \Omega^k(\partial T)$. If $T'$ is a face of $T$ and the restriction of $u$ to $\partial T'$ is $0$ then the following $k$-form on $|T|$ :
\begin{equation}
E_{T'} u = \phi_{T'} F_{T'}^\star (u|_{T'}),
\end{equation}
satisfies :
\begin{equation}
(i_{TT'})^\star E_{T'} u = u|_{T'},
\end{equation}
and for any other face $T''$ of $T$ with the same dimension as $T'$ we have (since $u |_{\partial T'} =0 $):
\begin{equation}
(i_{TT''})^\star E_{T'} u = 0.
\end{equation}
It follows that if $u\in \Omega^k(\partial T)$ is $0$ on all geometric realizations of faces of dimension $\leq l$, then the restriction to $\partial T$ of the $k$-form on $|T|$:
\begin{equation}
\sum_{T'\in \calT^{l+1}} E_{T'} u,
\end{equation}
coincides with $u$ on all those with dimension $\leq l+1$.

Now define an extension of $u$ recursively by:
\begin{equation}
E_0 u = \sum_{T'\in \calT^0} E_{T'} u,
\end{equation}
and for $0\leq l\leq k-2$:
\begin{equation}
E_{l+1} u = \sum_{T'\in \calT^{l+1}} E_{T'} (u - (E_0 u + \cdots + E_l u)|_{\partial T}).
\end{equation}

Then by the above considerations $E_0 u + \cdots + E_{k-1} u$ is indeed an extension of $u$ to $|T|$.
\end{proof}

\begin{remark}
The similar result for $\Lambda^\bullet$ follows easily from the fact that the barycentric coordinate maps $\lambda_i$ can be extend naturally from the boundary to the simplex (yielding barycentric coordinate maps).
\end{remark}
The space of $k$-forms on $T$ whose restriction to the boundary $\partial T$ is $0$ is denoted $\Omega^k(T,\partial T)$. The exterior derivative commutes with restrictions yielding maps:
\begin{equation}
\rmd : \Omega^k(T,\partial T) \to \Omega^{k+1}(T,\partial T).
\end{equation}
Thus we obtain a subcomplex of $\Omega^\bullet(T)$ denoted $\Omega^\bullet(T, \partial T)$. The next proposition is about its cohomology groups (called relative cohomology groups).

\begin{proposition}
For any $k< \dim T$, $\rmH^k \Omega^\bullet(T, \partial T) = 0$ i.e. for any $u \in \Omega^k(T)$ such that $u|_{\partial T}=0$ and $\rmd u =0$, there is $v \in \Omega^{k-1}(T)$ such that $v|_{\partial T}=0$ and $\rmd v = u$. 

For $k= \dim T$ we have $\dim \rmH^k \Omega^\bullet(T, \partial T) = 1$ and, for  any $u \in \Omega^k(T)$ such that $u|_{\partial T}=0$ and $\rmd u =0$, there is $v \in \Omega^{k-1}(T)$ such that $v|_{\partial T}=0$ and $\rmd v = u$ if and only if\footnote{Integration being taken with respect to any chosen orientation of $T$.} $\int u =0$.
\end{proposition}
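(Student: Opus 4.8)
The plan is to deduce the relative cohomology from the two facts already proved about a single simplex: the Poincar\'e lemma on $T$ (the proposition giving $\dim \rmH^0\Omega^\bullet(T)=1$ and $\rmH^k\Omega^\bullet(T)=0$ for $k\geq 1$) and the surjectivity of restriction to the boundary (Proposition \ref{prop:extension}). Write $p=\dim T$. Since restriction commutes with $\rmd$ and is onto, while $\Omega^\bullet(T,\partial T)$ is by definition its kernel, we obtain a short exact sequence of complexes
\begin{equation}
0 \to \Omega^\bullet(T,\partial T) \to \Omega^\bullet(T) \to \Omega^\bullet(\partial T) \to 0,
\end{equation}
and its associated long exact sequence in cohomology (Appendix \ref{sec:compl}) is the main engine. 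Feeding in $\rmH^k\Omega^\bullet(T)=0$ for $k\geq 1$, the connecting maps become isomorphisms $\rmH^{k-1}\Omega^\bullet(\partial T)\to \rmH^k\Omega^\bullet(T,\partial T)$ for every $k\geq 2$, while the low end of the sequence identifies $\rmH^0\Omega^\bullet(T,\partial T)$ and $\rmH^1\Omega^\bullet(T,\partial T)$ with the kernel and cokernel of the restriction $\bbR=\rmH^0\Omega^\bullet(T)\to \rmH^0\Omega^\bullet(\partial T)$. Everything thus reduces to computing the cohomology of $\partial T$.

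That computation is the heart of the matter, and I would carry it out by induction on $p$, since $\partial T$ is a simplicial model of the sphere $S^{p-1}$. For the inductive step I would decompose $\partial T = A \cup F$, where $F$ is the facet opposite a chosen vertex $v$ and $A$ is the union of the remaining facets, i.e. the closed star of $v$. The realization $|A|$ is star-shaped with respect to $v$, so the affine contraction toward $v$ furnishes a homotopy operator exactly as in the single-simplex Poincar\'e lemma, giving $A$ trivial reduced cohomology; $F$ is a single simplex, hence also trivial; and $A\cap F=\partial F$ is the boundary of a $(p-1)$-simplex, covered by the inductive hypothesis. A Mayer--Vietoris long exact sequence, obtained as above from the short exact sequence $0\to \Omega^\bullet(\partial T)\to \Omega^\bullet(A)\oplus\Omega^\bullet(F)\to \Omega^\bullet(A\cap F)\to 0$, then yields $\rmH^j\Omega^\bullet(\partial T)=\bbR$ for $j\in\{0,p-1\}$ and $0$ otherwise, with the base case $p=1$ ($\partial T$ two points, $\rmH^0=\bbR^2$) checked by hand. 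Establishing this Mayer--Vietoris sequence, and in particular the surjectivity of the middle restriction map by a partition-of-unity/extension argument of the type in Proposition \ref{prop:extension}, is the step I expect to be the main obstacle.

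Combining the sphere computation with the reduction above gives the stated answer. For $2\leq k\leq p-1$ one has $\rmH^k\Omega^\bullet(T,\partial T)\cong \rmH^{k-1}\Omega^\bullet(\partial T)=0$; for $p\geq 2$ the map $\bbR\to \rmH^0\Omega^\bullet(\partial T)=\bbR$ sending a constant to its (nonzero) restriction is an isomorphism, so both $\rmH^0$ and $\rmH^1$ of the pair vanish, while the case $p=1$ is verified directly. Hence $\rmH^k\Omega^\bullet(T,\partial T)=0$ for all $k<p$, and $\rmH^p\Omega^\bullet(T,\partial T)\cong \rmH^{p-1}\Omega^\bullet(\partial T)=\bbR$ is one-dimensional.

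It remains to identify the top class with the integral. By Stokes' theorem, if $u=\rmd v$ with $v|_{\partial T}=0$ then $\int_T u=\int_{\partial T} v|_{\partial T}=0$, so $u\mapsto \int_T u$ descends to a well-defined linear map $\rmH^p\Omega^\bullet(T,\partial T)\to \bbR$; evaluating it on a bump $p$-form of unit mass shows it is nonzero, hence an isomorphism of the one-dimensional space onto $\bbR$. Consequently a closed $u$ vanishing on $\partial T$ is a relative coboundary precisely when its class vanishes, i.e. precisely when $\int_T u=0$, which is the asserted criterion. (The same Stokes computation, applied to the generator of $\rmH^{p-1}\Omega^\bullet(\partial T)$ pushed through the connecting map, confirms that the integral indeed detects the top cohomology.)
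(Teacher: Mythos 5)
Your proof is correct, but it takes a genuinely different route from the paper's. The paper argues by direct induction on $\dim T$, entirely at the level of the simplex $T$: given a relative cocycle $u$, it applies the cone homotopy operator $A$ with apex a vertex $x_0$ to get $v=Au$ with $\rmd v=u$, observes that $v$ vanishes on every proper face except possibly the face $T'$ opposite $x_0$, and repairs this one defect by applying the induction hypothesis on $T'$ (using Stokes to check $\int_{T'} v|_{T'}=\int_T u=0$ in the top-degree case) and then extending the resulting primitive into $T$ via Proposition \ref{prop:extension}; necessity of $\int u=0$ is the same Stokes computation you give. You instead derive everything from the long exact sequence of the pair plus a Mayer--Vietoris induction computing $\rmH^\bullet\Omega^\bullet(\partial T)$ as the cohomology of a sphere. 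Two remarks on your version: first, the step you flag as the main obstacle --- surjectivity of $\Omega^\bullet(A)\oplus\Omega^\bullet(F)\to\Omega^\bullet(A\cap F)$ --- is in fact immediate, since $A\cap F=\partial F$ and Proposition \ref{prop:extension} applied to the single simplex $F$ already extends any compatible form on $\partial F$ into $F$; second, the one ingredient you use that the paper never establishes is the cone lemma for the closed vertex star $A$, which does require a check beyond the single-simplex case, namely that the contraction $F_t$ toward $v$ maps each simplex $T'$ of $A$ into $T'\cup\{v\}$, again a simplex of $A$, so that the homotopy operator is well defined simplex-wise and preserves compatibility --- routine, but it should be said. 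As for what each approach buys: the paper's induction is shorter, self-contained, and constructive (it exhibits the primitive $v$), whereas yours computes the full cohomology of the piecewise-smooth complex on the simplicial sphere $\partial T$ along the way and is closer in spirit to the paper's own subsequent use of long exact sequences and the five lemma --- indeed Proposition \ref{prop:boundary} runs the same exact sequence in the opposite direction, deducing facts about $\partial T$ from the relative result, while you deduce the relative result from $\partial T$. One small simplification available to you: in top degree the relative condition $u|_{\partial T}=0$ is automatic, since $p$-forms pull back to zero on $(p-1)$-dimensional faces, so a nonzero constant $p$-form already generates $\rmH^p\Omega^\bullet(T,\partial T)$ and no interior bump form is needed, as the paper notes.
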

The following induction argument is a variant of the one used to prove a similar result for polynomial forms in \cite{Hip99} (Lemma 17, which however does not distinguish the case $k= \dim T$).

\begin{proof}
We proceed by induction on the dimension of $T$. For $\dim T =0$ the result is clear. Suppose now that the proposition is true for simplexes of dimension $ \leq j$ for some integer $j$, and consider a simplex $T$ of dimension $j+1$. 

First we suppose that $k< \dim T$. Pick $u \in \Omega^k(T)$ such that $u|_{\partial T}=0$ and $\rmd u =0$. We use the homotopy operators constructed above, relative to a vertex $x_0$ of $T$. Let $T'$ denote the face of $T$ opposite to $x_0$.
Put $v=Au\in \Omega^{k-1}(T)$. Then we have $\rmd v = u$ and for any proper face $T''$ of $T$ -- except perhaps $T'$ -- $v |_{T''}=0$ (since $T''$ is in a proper face of $T$ containing $x_0$). We correct for this eventuality using the induction hypothesis:

Put $v'=v|_{T'}$. Then $v'$ is a $(k-1)$-form on the $(\dim T -1)$-simplex $T'$, such that $v'|_{\partial T'}=0$ and $\rmd v' = u|_{T'}= 0$ . We can therefore pick $w'\in \Omega^{k-2}(T')$ such that $w'|_{\partial T'}=0 $ and $\rmd w' = v'$. Then extension of $w'$ by $0$ to $\partial T$ yields an element of $\Omega^{k-2}(\partial T)$ still denoted $w'$, and we can extend this to an element $w$ of $\Omega^{k-2}(T)$ by Proposition \ref{prop:extension}. Then $v - \rmd w$ satisfies $\rmd (v -\rmd w) = u$ and by construction $(v - \rmd w)|_{\partial T}=0$.

Now we treat the case $k= \dim T$. If $\rmd v = u$ and $v |_{\partial T}=0$, the integral of $u$ is necessarily $0$ by Stokes formula:
\begin{equation}
\int_{T} u = \int_{T} \rmd v = \int_{\partial T} v|_{\partial T} =0.
\end{equation}
In particular any constant non-zero element $\Omega^{k}(T)$ provides a non-zero relative cohomology class. On the other hand if $\rmd u =0$ and $\int u =0$ we can use a construction similar to the preceding case, noticing that, with (as above) $v=Au$ and $v'=v|_{T'}$:
\begin{equation}
\int_{T'}v'= \int_{\partial T}v|_{\partial T} = \int_T\rmd v= \int_T u =0.
\end{equation}
This completes the induction step and hence the proof.
\end{proof}

\begin{proposition}
The inclusions $\Lambda^k(T, \partial T) \to \Omega^k(T, \partial T)$ induce isomorphisms\\
$\rmH^k \Lambda^\bullet (T, \partial T) \to \rmH^k \Omega^\bullet (T,\partial T)$.
\end{proposition}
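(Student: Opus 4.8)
The plan is to compute both sides explicitly, exploiting the fact that the relative finite-element complex $\Lambda^\bullet(T,\partial T)$ turns out to be concentrated in top degree, so that there is very little cohomology to compare. Write $p=\dim T$.

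First I would pin down the spaces $\Lambda^k(T,\partial T)$. For $k<p$ every $k$-simplex $T'\in\calT^k$ is a \emph{proper} face of $T$ and hence belongs to $\partial T$. So if $u=\sum_{T'\in\calT^k}c_{T'}\lambda_{T'}$ vanishes on $\partial T$, then for each such $T'$ we have $u|_{T'}=0$, whence $c_{T'}=\mu_{T'}(u)=0$ by the duality $\mu_{T'}(\lambda_{T''})=\delta_{T'T''}$ established in the first proposition of \S\ref{sec:def}. This unisolvence argument shows $\Lambda^k(T,\partial T)=0$ for all $k<p$. For $k=p$ the situation is trivial: any $p$-form restricts to zero on the $(p-1)$-dimensional set $\partial T$, so $\Lambda^p(T,\partial T)=\Lambda^p(T)=\myspan\{\lambda_T\}$ is one-dimensional. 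Consequently the complex $\Lambda^\bullet(T,\partial T)$ is concentrated in degree $p$, and its cohomology reads $\rmH^k\Lambda^\bullet(T,\partial T)=0$ for $k<p$ while $\rmH^p\Lambda^\bullet(T,\partial T)=\Lambda^p(T)$ is one-dimensional with generator the class of $\lambda_T$.

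The preceding proposition records exactly the same dimensions for $\rmH^k\Omega^\bullet(T,\partial T)$, namely zero for $k<p$ and one for $k=p$. Thus in every degree $k<p$ the inclusion induces a map between zero groups, which is automatically an isomorphism. The only point that genuinely needs checking is degree $p$, where I must verify that inclusion carries the generator to a \emph{nonzero} class. By the integral criterion in the preceding proposition, the class of $\lambda_T$ in $\rmH^p\Omega^\bullet(T,\partial T)$ is nonzero precisely when $\int_T\lambda_T\neq 0$; and this integral equals $\mu_T(\lambda_T)=1$ by the normalization of the first proposition of \S\ref{sec:def}. Since both cohomology groups are one-dimensional and a generator is sent to a nonzero element, the induced map is an isomorphism, completing the proof.

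I do not anticipate a real obstacle here: the whole argument reduces to two facts already in hand, the unisolvence of the degrees of freedom and the integral normalization $\int_T\lambda_T=1$. The one step that deserves care is the first, establishing that the relative finite-element spaces vanish below top degree, since everything downstream (the concentration of the complex in degree $p$ and the identification of the generator) follows from it.
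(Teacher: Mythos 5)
Your proof is correct and takes essentially the same route as the paper's: the paper's own argument rests on exactly your two facts, namely that $u|_{\partial T}=0$ forces $u=0$ in $\Lambda^k(T,\partial T)$ for $k<\dim T$, and that $\int u=0$ forces $u=0$ in top degree, combined with the preceding proposition on $\rmH^k\Omega^\bullet(T,\partial T)$. You merely spell out the unisolvence argument, the concentration of the relative complex in top degree, and the normalization $\int_T\lambda_T=1$ that the paper leaves implicit.
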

\begin{proof}
Pick $u\in \Lambda^k(T, \partial T)$. We remark the following:\\
-- If $k< \dim T$ then $u|_{\partial T} = 0$ implies $u=0$.\\
-- If $k=\dim T$ then $\int u = 0$ implies $u=0$.\\
From these two remarks and the preceding proposition the result follows.
\end{proof}

\begin{proposition}\label{prop:boundary}
The inclusions $\Lambda^k(\partial T) \to \Omega^k(\partial T)$ induce isomorphisms\\ $\rmH^k \Lambda^\bullet (\partial T) \to \rmH^k \Omega^\bullet (\partial T)$.
\end{proposition}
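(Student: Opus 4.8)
The plan is to reduce this to the two preceding propositions — the absolute comparison $\rmH^k\Lambda^\bullet(T)\to\rmH^k\Omega^\bullet(T)$ and the relative comparison $\rmH^k\Lambda^\bullet(T,\partial T)\to\rmH^k\Omega^\bullet(T,\partial T)$ — by comparing long exact sequences. First I would set up, for $\Omega^\bullet$ and for $\Lambda^\bullet$ alike, the short exact sequence of complexes tying together the absolute complex, the relative complex, and the complex on the boundary:
\[
0 \to \Omega^\bullet(T,\partial T) \to \Omega^\bullet(T) \to \Omega^\bullet(\partial T) \to 0,
\]
and likewise with $\Lambda$ in place of $\Omega$. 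The first arrow is the inclusion and the second is the restriction $u\mapsto u|_{\partial T}$. Exactness on the left is injectivity of the inclusion; exactness in the middle holds because $\Omega^\bullet(T,\partial T)$ is, by definition, the kernel of the restriction map; and exactness on the right is exactly surjectivity of the restriction, which is Proposition \ref{prop:extension} for $\Omega^\bullet$ and the remark following it for $\Lambda^\bullet$.

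Next I would invoke the standard long exact cohomology sequence attached to a short exact sequence of complexes (Appendix \ref{sec:compl}), applied to each row. The inclusions $\Lambda^\bullet\hookrightarrow\Omega^\bullet$ are morphisms of complexes that commute with both the inclusions and the restrictions just used, so they furnish a morphism of short exact sequences, hence by naturality a commuting ladder between the two long exact sequences. In that ladder the vertical maps at the absolute terms $\rmH^\bullet(T)$ and at the relative terms $\rmH^\bullet(T,\partial T)$ are isomorphisms by the two preceding propositions.

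Finally I would apply the Five Lemma to the segment
\[
\rmH^k\Lambda^\bullet(T,\partial T) \to \rmH^k\Lambda^\bullet(T) \to \rmH^k\Lambda^\bullet(\partial T) \to \rmH^{k+1}\Lambda^\bullet(T,\partial T) \to \rmH^{k+1}\Lambda^\bullet(T)
\]
mapping to the analogous $\Omega$-segment: the four outer vertical arrows being isomorphisms forces the middle arrow $\rmH^k\Lambda^\bullet(\partial T)\to\rmH^k\Omega^\bullet(\partial T)$ to be an isomorphism as well, which is the claim. The only genuine content beyond formal homological algebra is the exactness on the right of the $\Lambda$-row, i.e. surjectivity of the restriction for $\Lambda^\bullet$ and not merely for $\Omega^\bullet$; I expect this to be the point to check carefully, and it is supplied by the extension of barycentric coordinate maps noted after Proposition \ref{prop:extension}. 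Beyond that, the one thing worth stating explicitly is that the connecting homomorphisms commute with the inclusion-induced vertical maps — automatic from naturality of the snake construction, but precisely what makes the ladder commute so that the Five Lemma applies.
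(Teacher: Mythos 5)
Your proposal is correct and follows essentially the same route as the paper: the short exact sequence $0 \to \Omega^\bullet(T,\partial T) \to \Omega^\bullet(T) \to \Omega^\bullet(\partial T) \to 0$ (and its $\Lambda$-analogue), the induced ladder of long exact sequences, and the five lemma with the absolute and relative comparison isomorphisms as the outer vertical arrows. Your explicit attention to surjectivity of restriction in the $\Lambda$-row is exactly the point the paper disposes of in the remark following Proposition \ref{prop:extension}.
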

\begin{proof}
For each $k$ we have a short exact sequences of the type:
\begin{equation}
0 \to \Omega^k (T,\partial T) \to \Omega^k(T) \to \Omega^k (\partial T) \to 0.
\end{equation}
They provide a long exact sequence in cohomology (see appendix \ref{sec:compl}). We have a similar result for $\Lambda^\bullet$. Thus, for each $k$, we have a commutative diagram:
\begin{equation}\label{eq:commdiagram}
\begin{array}{ccccc}\!\!\!
\rmH^k \Omega^\bullet (T,\partial T) & \!\!\!\to  \rmH^k\Omega^\bullet(T)  \to \!\!\!&\rmH^k\Omega^\bullet (\partial T)&\!\!\! \to  \rmH^{k+1} \Omega^\bullet (T,\partial T)  \to \!\!\!& \rmH^{k+1}\Omega^\bullet(T)\\
\uparrow& \uparrow &\uparrow &\uparrow &\uparrow\\
\rmH^k \Lambda^\bullet (T,\partial T) & \!\!\!\to  \rmH^k\Lambda^\bullet(T)  \to\!\!\! &\rmH^k\Lambda^\bullet (\partial T)& \!\!\!\to  \rmH^{k+1} \Lambda^\bullet (T,\partial T)  \to\!\!\! & \rmH^{k+1}\Lambda^\bullet(T)\\
\end{array}
\end{equation}
where each row is an exact sequence, part of the long exact sequence in cohomology obtained from the short ones, 
and the vertical arrows are maps induced by the inclusions of complexes $\Lambda^\bullet \to \Omega^\bullet$.

In diagram (\ref{eq:commdiagram}) the two first and the two last vertical arrows are isomorphisms. It then follows by the \emph{five lemma} (see Gelfand-Manin \cite{GelMan03} p. 120) that the middle arrow is an isomorphism. 
\end{proof}

The preceding results on simplexes can be extended to simplicial complexes as follows:
\begin{theorem}\label{theo:cohom}
The inclusion mappings $\Lambda^k(\calT) \to \Omega^k(\calT)$ induce isomorphisms in cohomology $\rmH^k\Lambda^\bullet(\calT) \to \rmH^k\Omega^\bullet(\calT)$.
\end{theorem}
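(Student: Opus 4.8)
The plan is to induct on the number of simplexes in $\calT$, peeling off one maximal simplex at a time and comparing the resulting complexes by the \emph{five lemma}, exactly as in the proof of Proposition \ref{prop:boundary}. For the inductive step I would choose a simplex $T\in\calT$ that is maximal for inclusion (any simplex of maximal dimension works); such a $T$ exists since $\calT$ is finite and nonempty. Because $T$ is not a proper face of any other simplex, the set $\calS = \calT \setminus \{T\}$ is again a simplicial complex, with one fewer simplex, and it contains $\partial T$ as a subcomplex. The base case is a single vertex, where both $\Omega^\bullet$ and $\Lambda^\bullet$ reduce to the constants in degree $0$ and the statement is trivial.

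The key device is the short exact sequence of complexes
\begin{equation}
0 \to \Omega^\bullet(T, \partial T) \to \Omega^\bullet(\calT) \to \Omega^\bullet(\calS) \to 0,
\end{equation}
in which the second map is the restriction $u \mapsto u|_{\calS}$ and the first extends a form on $|T|$ vanishing on $\partial T$ by zero on all other simplexes. Identifying the kernel of the restriction with $\Omega^\bullet(T,\partial T)$ is immediate from the definition of compatibility: a family vanishing on all of $\calS$ has every component zero except possibly on $T$, and compatibility forces that remaining component to vanish along $\partial T$. The real content lies in surjectivity of the restriction: given $v \in \Omega^\bullet(\calS)$ one restricts to $v|_{\partial T} \in \Omega^\bullet(\partial T)$ and extends it to a form on $|T|$ by Proposition \ref{prop:extension}, obtaining a compatible family on $\calT$ whose only new component matches $v$ along $\partial T$. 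The identical construction yields the analogous sequence for $\Lambda^\bullet$; here surjectivity is even easier, since for $k < \dim T$ the restriction $\Lambda^k(T) \to \Lambda^k(\partial T)$ is already a bijection of the natural bases, while for $k = \dim T$ one has $\Lambda^k(\partial T) = 0$.

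Each short exact sequence gives a long exact sequence in cohomology, and the inclusions $\Lambda^\bullet \to \Omega^\bullet$ intertwine them into a commutative ladder. By the induction hypothesis the maps $\rmH^k\Lambda^\bullet(\calS) \to \rmH^k\Omega^\bullet(\calS)$ are isomorphisms, and the preceding proposition on relative cohomology supplies isomorphisms $\rmH^k\Lambda^\bullet(T, \partial T) \to \rmH^k\Omega^\bullet(T, \partial T)$. Applying the five lemma to the appropriate five-term window of the ladder then forces the middle map $\rmH^k\Lambda^\bullet(\calT) \to \rmH^k\Omega^\bullet(\calT)$ to be an isomorphism, which completes the induction.

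I expect the main obstacle to be the careful verification of the short exact sequence — specifically, that restriction $\Omega^\bullet(\calT) \to \Omega^\bullet(\calS)$ is onto, which is precisely where Proposition \ref{prop:extension} is needed, and that its kernel is exactly $\Omega^\bullet(T,\partial T)$. Once these are in place, the homological bookkeeping is routine and follows the template already established for simplex boundaries.
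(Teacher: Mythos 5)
Your proof is correct, but it takes a genuinely different route from the paper's. Both arguments run along the same filtration --- the paper adjoins simplexes in nondecreasing dimension starting from a vertex, while you peel off a maximal simplex, which is the reverse traversal --- and both conclude with the five lemma; the difference is the short exact sequence used at each step. The paper compares the Mayer--Vietoris sequences $0 \to \Omega^k(\calT) \to \Omega^k(\calT')\times\Omega^k(T) \to \Omega^k(\partial T) \to 0$, so its five-lemma ladder needs as inputs both the comparison for the full simplex $T$ and Proposition \ref{prop:boundary} (the isomorphism $\rmH^k\Lambda^\bullet(\partial T)\to\rmH^k\Omega^\bullet(\partial T)$, itself proved by a separate five-lemma argument from the relative case). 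You instead use the sequence of the pair, $0 \to \Omega^\bullet(T,\partial T) \to \Omega^\bullet(\calT) \to \Omega^\bullet(\calT\setminus\{T\}) \to 0$, which places $\rmH^k\Omega^\bullet(\calT)$ in the middle column of the ladder and needs only the relative isomorphism $\rmH^k\Lambda^\bullet(T,\partial T)\to\rmH^k\Omega^\bullet(T,\partial T)$ together with the induction hypothesis; Proposition \ref{prop:boundary} and the single-simplex comparison become superfluous, the latter being absorbed as an instance of the induction rather than a separate input. Your identification of the kernel and your appeal to Proposition \ref{prop:extension} for surjectivity are exactly where the content lies, and both check out: maximality of $T$ guarantees that $\calT\setminus\{T\}$ is a simplicial complex containing $\partial T$, and that $\lambda_T$ restricts to zero on every other simplex, so the kernel on the $\Lambda$ side is spanned by $\lambda_T$ when $k=\dim T$ and vanishes otherwise. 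The one cosmetic imprecision is that for surjectivity on the $\Lambda$ side the relevant map is $\Lambda^k(\calT)\to\Lambda^k(\calT\setminus\{T\})$, which sends basis forms to the corresponding basis forms (and $\lambda_T$, present only when $k=\dim T$, to zero), rather than the map $\Lambda^k(T)\to\Lambda^k(\partial T)$ you cite; the underlying observation is the same. In exchange for the extra intermediate proposition, the paper's route yields the boundary comparison itself, which has independent interest for the screen-like complexes mentioned in the introduction, whereas your route is the more economical path to the theorem as stated.
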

\begin{proof}
The simplicial complex $\calT$ can be constructed by starting  with a $0$-simplex and adjoining at each step a top-dimensional simplex whose boundary is already included. The case of a complex consisting only of a $0$-simplex has already been treated (if need be) so we consider now a simplicial complex $\calT'$ for which the theorem is true and a simplex $T \notin \calT' $ such that $\partial T \subset \calT'$ and the dimension of $T$ is larger than, or equal to, the largest dimension of the elements of $\cal T'$. Let $\calT$ denote the simplicial complex $\calT' \cup \{T\}$.

We consider the short exact sequences of Mayer-Vietoris type (see \cite{Spa66} chapter 4, \S 6):
\begin{eqnarray}
0 \to\Omega^k(\calT) \to \Omega^k(\calT')\times \Omega^k(T) \to \Omega^k(\partial T) \to 0,
\end{eqnarray}
where the second arrow is the injection:
\begin{equation}
u \mapsto (u|_{\calT'}, u|_T),
\end{equation}
and the third arrow is the surjection (surjectivity follows from Proposition \ref{prop:extension}):
\begin{equation}
 (u, v) \mapsto u|_{\partial T} - v|_{\partial T}. 
\end{equation}
These provide a long exact sequence in cohomology.

We also have similar short exact sequences for $\Lambda^\bullet$ providing a long exact sequence in cohomology. The inclusion maps $\Lambda^\bullet\to \Omega^\bullet$ then provide a commuting diagram relating these two long exact sequences, much as in  Proposition \ref{prop:boundary}:
\begin{equation}
\begin{array}{ccccc}
\rmH^k \Omega^\bullet (\calT') \times \rmH^k \Omega^\bullet (T)& \!\!\! \to  \rmH^k\Omega^\bullet(\partial T)  \to \!\!\!&\rmH^{k+1}\Omega^\bullet (\calT)& \!\!\! \to  \rmH^{k+1} \Omega^\bullet (\calT') \times \rmH^{k+1} \Omega^\bullet (T) \to \!\!\! & \rmH^{k+1}\Omega^\bullet(\partial T)\\
\uparrow& \uparrow &\uparrow &\uparrow &\uparrow\\
\rmH^k \Lambda^\bullet (\calT') \times \rmH^k \Lambda^\bullet (T)& \!\!\! \to  \rmH^k\Lambda^\bullet(\partial T)  \to\!\!\! &\rmH^{k+1}\Lambda^\bullet (\calT)& \!\!\! \to  \rmH^{k+1} \Lambda^\bullet (\calT') \times \rmH^{k+1}\Lambda^\bullet (T) \to\!\!\! & \rmH^{k+1}\Lambda^\bullet(\partial T)\\
\end{array}
\end{equation}
Once again the five lemma then gives the desired result.
\end{proof}

\subsection{On high order finite element spaces\label{sec:fe}}

Consider the following situation: suppose we are given spaces $\Gamma^k(\calT)$ such that $\Lambda^k(\calT) \subset \Gamma^k(\calT) \subset \Omega^k(\calT)$ and $\rmd$ maps $\Gamma^k(\calT)$ into $\Gamma^{k+1}(\calT)$. We suppose furthermore that the spaces $\Gamma^k(\calT)$ are equipped with interpolation operators $\Pi_\Gamma :\Omega^k(\calT) \to \Gamma^k(\calT)$ i.e. projectors with range $\Gamma^k(\calT)$, and that the interpolators commute with the exterior derivative. We also suppose that we have equipped $\Lambda^k(\calT)$ with interpolation operators $\Pi_\Lambda :\Omega^k(\calT) \to \Lambda^k(\calT)$ commuting with the exterior derivative.

\begin{proposition}\label{prop:highorder}
Suppose that $\Pi_\Lambda \circ \Pi_\Gamma = \Pi_\Lambda$. Then the inclusions $\Lambda^k(\calT) \to \Gamma^k(\calT)$ induce isomorphisms in cohomology.
\end{proposition}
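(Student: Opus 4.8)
The plan is to argue entirely at the level of induced maps on cohomology, using that everything in sight is a chain map: the inclusions $\iota:\Lambda^\bullet(\calT)\to\Gamma^\bullet(\calT)$, $j:\Gamma^\bullet(\calT)\to\Omega^\bullet(\calT)$ and their composite $\kappa=j\circ\iota$ commute with $\rmd$ trivially, while $\Pi_\Gamma$ and $\Pi_\Lambda$ commute with $\rmd$ by hypothesis. First I would record the elementary consequences of the projector property: since $\Pi_\Gamma$ has range $\Gamma^\bullet(\calT)$ and $\Pi_\Lambda$ has range $\Lambda^\bullet(\calT)$, one has $\Pi_\Gamma\circ j=\id_{\Gamma}$ and $\Pi_\Lambda\circ\kappa=\id_{\Lambda}$. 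Writing $\bar\pi=\Pi_\Lambda\circ j:\Gamma^\bullet(\calT)\to\Lambda^\bullet(\calT)$ for the restriction of $\Pi_\Lambda$ to $\Gamma$, one also gets $\bar\pi\circ\iota=\id_{\Lambda}$, and the hypothesis $\Pi_\Lambda\circ\Pi_\Gamma=\Pi_\Lambda$ becomes $\bar\pi\circ\Pi_\Gamma=\Pi_\Lambda$ once it is noted that $\Pi_\Gamma$ lands in $\Gamma$.

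Passing to cohomology, set $p=(\Pi_\Gamma)_\ast$ and $q=(\Pi_\Lambda)_\ast$. I would begin by observing that $q$ is an isomorphism: the identity $\Pi_\Lambda\circ\kappa=\id$ gives $q\circ\kappa_\ast=\id$, and by Theorem \ref{theo:cohom} the map $\kappa_\ast$ is an isomorphism, so $q=\kappa_\ast^{-1}$. The key step is then to prove that $p$ is an isomorphism. Surjectivity is immediate from $\Pi_\Gamma\circ j=\id$, which yields $p\circ j_\ast=\id_{\rmH^k\Gamma^\bullet(\calT)}$. For injectivity I would invoke the hypothesis: $\bar\pi\circ\Pi_\Gamma=\Pi_\Lambda$ gives $\bar\pi_\ast\circ p=q$, and since $q$ is injective so is $p$. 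Hence $p$ is an isomorphism (and $j_\ast=p^{-1}$, i.e.\ $j$ is a quasi-isomorphism as well).

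To finish, from $\bar\pi_\ast\circ p=q$ with $p$ and $q$ isomorphisms I conclude that $\bar\pi_\ast=q\circ p^{-1}$ is an isomorphism; combined with $\bar\pi_\ast\circ\iota_\ast=\id$ (from $\bar\pi\circ\iota=\id$) this forces $\iota_\ast=\bar\pi_\ast^{-1}$ to be an isomorphism, which is the assertion. A pleasant by-product of this route is that it identifies the inverse of $\iota_\ast$ explicitly as the map induced by the interpolator $\bar\pi=\Pi_\Lambda|_{\Gamma}$, which is convenient for the commuting diagrams used later.

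The main obstacle is precisely the injectivity of $p$, equivalently the statement that $j:\Gamma^\bullet(\calT)\to\Omega^\bullet(\calT)$ is a quasi-isomorphism. The projector structure by itself only delivers the easy ``halves'' ($j_\ast$ injective and $p$ surjective), and to close the loop one must import genuine information from $\Omega^\bullet(\calT)$: this is where Theorem \ref{theo:cohom} (that $\Lambda^\bullet(\calT)\to\Omega^\bullet(\calT)$ is a quasi-isomorphism) and the compatibility hypothesis $\Pi_\Lambda\circ\Pi_\Gamma=\Pi_\Lambda$ are used in tandem, the latter being what links the two a priori independent interpolators. I note that one can alternatively reach the bare isomorphism statement by a ``two out of three'' argument: $\kappa_\ast=j_\ast\circ\iota_\ast$ is an isomorphism and $j_\ast$ is injective, so $j_\ast$ is surjective, hence an isomorphism, and then $\iota_\ast=j_\ast^{-1}\circ\kappa_\ast$; the formulation above is the one that makes the role of the stated hypothesis transparent.
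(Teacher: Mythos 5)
Your proof is correct, but it takes a genuinely different route from the paper's. The paper argues at the chain level: it first notes that $\Pi_\Lambda$, being a left inverse of the quasi-isomorphism $\Lambda^\bullet(\calT)\to\Omega^\bullet(\calT)$ of Theorem \ref{theo:cohom}, induces isomorphisms in cohomology, so its kernel complex in $\Omega^\bullet(\calT)$ is exact; it then proves exactness of the kernel complex of the restriction $\Pi_\Lambda|_{\Gamma^\bullet(\calT)}$ by an explicit lift-and-project: given closed $u\in\Gamma^k(\calT)$ with $\Pi_\Lambda u=0$, choose $v\in\Omega^{k-1}(\calT)$ with $\rmd v=u$ and $\Pi_\Lambda v=0$, and check that $\Pi_\Gamma v$ is a primitive of $u$ in $\ker\Pi_\Lambda\cap\Gamma^{k-1}(\calT)$ --- the hypothesis $\Pi_\Lambda\circ\Pi_\Gamma=\Pi_\Lambda$ enters exactly once, at $\Pi_\Lambda\Pi_\Gamma v=\Pi_\Lambda v=0$. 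That argument leans twice on the kernel-complex/long-exact-sequence remark of Appendix \ref{sec:compl}, but in exchange is constructive, exhibiting primitives inside $\Gamma^\bullet(\calT)$. You never touch chains: every step is a retraction identity among the induced maps ($q=\kappa_\ast^{-1}$ from $\Pi_\Lambda\kappa=\id$ and Theorem \ref{theo:cohom}; $p$ surjective from $p\,j_\ast=\id$ and injective from $\bar{\pi}_\ast\, p=q$; then $\bar{\pi}_\ast=q\,p^{-1}$ and $\iota_\ast=\bar{\pi}_\ast^{-1}$), all of which check out, and you get the explicit inverses for free. Your closing ``two out of three'' remark is worth highlighting: since $p\,j_\ast=\id$ uses only that $\Pi_\Gamma$ is a chain projector onto $\Gamma^\bullet(\calT)$, and $\kappa_\ast=j_\ast\iota_\ast$ is an isomorphism by Theorem \ref{theo:cohom}, the conclusion in fact holds without the hypothesis $\Pi_\Lambda\circ\Pi_\Gamma=\Pi_\Lambda$ (indeed without invoking $\Pi_\Lambda$ at all, via $\iota_\ast=j_\ast^{-1}\kappa_\ast$); the stated hypothesis --- natural, and automatic for the moment-matching interpolators the paper has in mind --- is thus what the paper's particular chain-level proof consumes, not what the statement requires.
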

\begin{proof}
Since $\Pi_\Lambda$ is a left inverse of the inclusion map $\Lambda^\bullet(\calT) \to \Omega^\bullet(\calT)$ and the latter induces isomorphisms in cohomology, $\Pi_\Lambda$ also induces isomorphisms in cohomology. Thus the \emph{kernel complex} (see appendix \ref{sec:compl}) of $\Pi_\Lambda$ is exact.

Consider the morphism of complexes $\Gamma^\bullet(\calT) \to \Lambda^\bullet(\calT)$ obtained by restricting $\Pi_\Lambda$ ; we shall prove that its kernel complex is exact, from which it follows that it induces isomorphisms in cohomology.

Suppose $u \in \Gamma^k(\calT)$ satisfies $\Pi_\Lambda u = 0$ and $\rmd u =0$. We can pick a $v \in \Omega^{k-1}(\calT)$ such that $\Pi_\Lambda v = 0$ and $\rmd v = u$. Then $\Pi_\Gamma v$ is an element of $\Gamma^{k-1}(\calT)$ such that $\Pi_\Lambda \Pi_\Gamma v = \Pi_\Lambda v = 0$ and $ \rmd \Pi_\Gamma v= \Pi_\Gamma u=u$.

This completes the proof. 
\end{proof}

For lowest order finite element spaces we have already constructed interpolators determined by equation (\ref{eq:interpol}). For high or variable order finite elements the construction of suitable interpolators is more technical. Usually they are constructed by imposing conditions similar to (\ref{eq:interpol}), for a given choice of simplexes $T$ with $\dim T \geq k$ and where $v$ and $u$ are perhaps differentiated and then wedged with an appropriate space of polynomial forms before being integrated; one says that one imposes high order moments to match. Interpolators of this type, commuting with the exterior derivative, were constructed in Demkowicz et al. \cite{DemMonVarRac00}, see also Hiptmair \cite{Hip02} \S 3.5. In any case we remark that most often, to construct interpolators for high order finite elements, one imposes matching moments with respect to a family of linear forms on $\Omega^k(\calT)$ containing $(\mu_T)_{T \in \calT^k}$ as a subfamily, and this guarantees that the hypothesis $\Pi_\Lambda \circ \Pi_\Gamma = \Pi_\Lambda$ holds.

One way of constructing higher order finite element spaces is the following.
Put $X^k_1 = \Lambda^k(\calT)$ for each $k$. Define $X^0_n$ to be the vectorspace generated by functions of the form $u_1 u_2 \cdots u_n$ with $u_i \in X^0_n$ for each $i$. Then define, for $k \geq 1$ and $n\geq 2$, $X^k_n$ to be the vectorspace generated by compatible $k$-forms of the form $uv$ with $u \in X^0_{n-1}$ and $v \in X^k_1$. Our main result on  these spaces is:
\begin{proposition}\label{prop:wedge}
For each $k,l$ and $m,n$ the wedge of forms provides a map:
\begin{equation}
\wedge : X^k_m \times X^l_n \to X^{k+l}_{m+n}
\end{equation}
\end{proposition}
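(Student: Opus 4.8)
The plan is to reduce, by bilinearity of $\wedge$, to a statement about generators, to peel off the scalar (degree-zero) factors using a simple absorption lemma, and finally to prove a single clean algebraic identity for products of lowest order Whitney forms. The only genuine content is that last identity; everything else is formal.

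First I would record the \emph{absorption lemma}: for all $p,r\ge 1$ and all $q\ge 0$,
\begin{equation}
X^0_p \cdot X^q_r \subseteq X^q_{p+r}.
\end{equation}
This is immediate from the definitions: if $q=0$, a product of $p$ factors from $\Lambda^0(\calT)$ times a product of $r$ such factors is a product of $p+r$ of them; if $q\ge 1$ and $r=1$, it is exactly the defining property of $X^q_{p+1}$; and the case $q\ge 1,\,r\ge 2$ follows by writing a generator of $X^q_r$ as $u'v'$ with $u'\in X^0_{r-1}$, $v'\in\Lambda^q(\calT)$, and reassociating the scalar factors. I would also note that, since the $\lambda_i$ form a partition of unity, $1=\sum_i\lambda_i\in\Lambda^0(\calT)=X^0_1$. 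Granting the lemma, the Proposition follows by induction on $m+n$. By bilinearity it suffices to wedge a generator $\alpha\in X^k_m$ with a generator $\beta\in X^l_n$. If $m\ge 2$, such an $\alpha$ factors as $\alpha=u\,v$ with $u\in X^0_{m-1}$ and $v\in\Lambda^k(\calT)=X^k_1$; since $u$ is a $0$-form, $\alpha\wedge\beta=u\,(v\wedge\beta)$, and as the index sum $1+n$ is smaller the induction hypothesis gives $v\wedge\beta\in X^{k+l}_{1+n}$, whence $\alpha\wedge\beta\in X^0_{m-1}\cdot X^{k+l}_{n+1}\subseteq X^{k+l}_{m+n}$ by the absorption lemma. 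The case $n\ge 2$ is symmetric, so everything reduces to the base case
\begin{equation}
\Lambda^k(\calT)\wedge\Lambda^l(\calT)\subseteq X^{k+l}_2 = \Lambda^0(\calT)\cdot\Lambda^{k+l}(\calT).
\end{equation}

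For the base case I would use the bracket notation $[u_0,\ldots,u_k]:=\sum_{i=0}^k(-1)^i u_i\,\rmd u_0\wedge\cdots\wedge\widehat{\rmd u_i}\wedge\cdots\wedge\rmd u_k$ (a hat denoting omission), so that by the alternative characterization given in the text $\Lambda^k(\calT)=\myspan\{[u_0,\ldots,u_k]:u_i\in\Lambda^0(\calT)\}$. By bilinearity it then suffices to establish the Whitney product identity
\begin{equation}
[u_0,\ldots,u_k]\wedge[v_0,\ldots,v_l]=\sum_{i=0}^k(-1)^{k+i}\,u_i\,[u_0,\ldots,\widehat{u_i},\ldots,u_k,v_0,\ldots,v_l],
\end{equation}
whose right-hand side is \emph{manifestly} a sum of terms (a $0$-form in $\Lambda^0(\calT)$) $\times$ (a bracket of $k+l+1$ functions, hence in $\Lambda^{k+l}(\calT)$), and therefore lies in $X^{k+l}_2$. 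To prove the identity I would expand $[u_0,\ldots,u_k]=\sum_i(-1)^i u_i A_i$ with $A_i=\rmd u_0\wedge\cdots\wedge\widehat{\rmd u_i}\wedge\cdots\wedge\rmd u_k$, wedge with $[v_0,\ldots,v_l]=\sum_j(-1)^j v_j B_j$, and compare monomial by monomial with the expansion of the right-hand side. Both sides are sums of (a quadratic monomial in the $u$'s and $v$'s) $\times$ (a wedge of $k+l$ differentials). The mixed terms carrying a coefficient $u_i v_j$ match directly: on the right the factor $v_j$ sits in position $k+j$ of the $i$-th bracket, so it is weighted by $(-1)^{k+i}(-1)^{k+j}=(-1)^{i+j}$, which is exactly its weight $(-1)^{i+j}u_iv_j\,A_i\wedge B_j$ on the left. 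The terms carrying a coefficient $u_iu_a$ with $a\ne i$ occur only on the right and cancel in pairs: taking $a<i$, removing $u_i$ leaves $u_a$ in position $a$ of the $i$-th bracket while removing $u_a$ shifts $u_i$ to position $i-1$ of the $a$-th bracket, so the two contributions carry opposite signs.

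In summary, the formal apparatus — bilinearity, the absorption lemma, and the induction on $m+n$ — is routine, and the whole Proposition rests on the Whitney product identity above. That identity is the step where I expect the only real difficulty, and it is purely a matter of Koszul sign bookkeeping: one must check that the $u_iv_j$ terms reproduce $[u_0,\ldots,u_k]\wedge[v_0,\ldots,v_l]$ and that the $u_iu_a$ terms cancel. I have verified both the sign match and the cancellation in general (the cancellation coming from the position shift by one described above, and the match from $(-1)^{k+i}(-1)^{k+j}=(-1)^{i+j}$), as well as in the low-degree cases $k,l\le 2$; the general verification is entirely mechanical but must be carried out with care over the ordering of the differentials.
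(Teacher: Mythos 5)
Your proposal is correct, and at its core it is the same argument as the paper's: both reduce the full statement to the case $m=n=1$, i.e.\ to showing $\Lambda^k(\calT)\wedge\Lambda^l(\calT)\subseteq X^{k+l}_2$, and both establish this via the identical Whitney product identity --- your formula $[u_0,\ldots,u_k]\wedge[v_0,\ldots,v_l]=\sum_{i=0}^k(-1)^{k+i}u_i[u_0,\ldots,\widehat{u_i},\ldots,u_k,v_0,\ldots,v_l]$ coincides, after normalizing the bracket conventions, with the paper's $u_{[0\cdots k-1]}\wedge u_{[k\cdots k+l]}=(-1)^{k-1}\sum_i(-1)^iu_iu_{[0\cdots\hat{i}\cdots k+l]}$. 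The differences are in execution, at both ends. Where the paper says the reduction ``follows immediately,'' you make it explicit via the absorption lemma $X^0_p\cdot X^q_r\subseteq X^q_{p+r}$ and induction on $m+n$; this is routine but worth writing down, and your observation that $1=\sum_i\lambda_i\in\Lambda^0(\calT)$ is the right ingredient. Conversely, where the paper proves the key identity by induction on $k$ (adjoining one function $u_{-1}$ at a time and tracking the cancellation between the two resulting terms), you verify it in one shot by expanding both sides into monomials and checking that the mixed $u_iv_j$ terms carry the matching weight $(-1)^{i+j}$ while the $u_iu_a$ terms on the right cancel in pairs via the position shift $i\mapsto i-1$. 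Both proofs are sound --- I checked your sign bookkeeping: the pair of coefficients $(-1)^{k+i}(-1)^a$ and $(-1)^{k+a}(-1)^{i-1}$ indeed sum to zero, and the omitted-differential parts agree since the functions appear in the same order in both brackets. The trade-off is that your direct expansion is self-contained and makes the combinatorial mechanism visible at once, at the cost of a double-sum verification, while the paper's induction localizes the cancellation to a single adjoined factor per step. Either route would be acceptable as a replacement proof.
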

\begin{proof}
We prove that $\wedge : X^k_1 \times X^l_1 \to X^{k+l}_{2}$, from which the proposition follows immediately.

We will need some notations:

Let $(u_i)$ be some family of functions indexed by consecutive integers. For any set of consecutive integers $k < \cdots < l$ we put:
\begin{equation}
\delta u_{[k \cdots l]}= \rmd u_k \wedge \cdots \wedge \rmd u_l.
\end{equation}
This notation will also be used when one index is missing in the set $\{k, \cdots, l \}$, e.g.:
\begin{equation}
\delta u_{[k \cdots \hat{i} \cdots l]}= \rmd u_k \wedge \cdots (\rmd u_i)^\wedge \cdots \wedge \rmd u_l.
\end{equation}
We also put:
\begin{equation}
u_{[k \cdots l]}= (-1)^k\sum_{i=k}^l (-1)^i u_i \delta u_{[k \cdots \hat{i} \cdots l]}, 
\end{equation}
a notation which is extended straightforwardly to the case of one missing index.

We will prove, by induction on $k$, that:
\begin{equation}
u_{[0 \cdots k-1]} \wedge u_{[k \cdots k+l]} = (-1)^{k-1} \sum_{i=0}^{k-1} (-1)^i u_i u_{[0 \cdots \hat{i} \cdots k+l]}.
\end{equation}
It is evidently true for $k = 1$ and, if it is true for a given $k \geq 1$, we can make the following computations. We remark that:
\begin{equation}
u_{[-1 \cdots k-1]} \wedge u_{[k \cdots k+l]} = (u_{-1} \delta u_{[0 \cdots k-1]} - \delta u_{-1} \wedge  u_{[0 \cdots k-1]})\wedge u_{[k \cdots k+l]}.
\end{equation}
Concerning the first term on the right hand side, we see that:
\begin{equation}\label{eq:firstpart}
u_{-1} \delta u_{[0 \cdots k-1]}\wedge u_{[k \cdots l]} = (-1)^k u_{-1}(u_{[0 \cdots k+l]} - \sum_{i=0}^{k-1}u_i \delta u_{[0 \cdots \hat{i} \cdots k+l]}).
\end{equation}
For the second term we remark that (by the induction hypothesis):
\begin{eqnarray}
& & \delta u_{-1} \wedge  u_{[0 \cdots k-1]}\wedge u_{[k \cdots k+l]}\\
 &=& (-1)^{k-1}\delta u_{-1} \wedge \sum_{i=0}^{k-1} (-1)^i u_i u_{[0 \cdots \hat{i} \cdots k+l]}\\
\label{eq:secondpart}
&=& (-1)^{k-1}\sum_{i=0}^{k-1} (-1)^i u_i (-u_{[-1 \cdots \hat{i} \cdots k+l]} + u_{-1} \delta u_{[0 \cdots \hat{i} \cdots k+l]})
\end{eqnarray}
Now the last term in (\ref{eq:firstpart}) cancels with the last term in (\ref{eq:secondpart}) and we are left with:
\begin{equation}
u_{[-1 \cdots k-1]} \wedge u_{[k \cdots k+l]} = (-1)^k ( u_{-1} u_{[0 \cdots k+l]} - \sum_{i=0}^{k-1} (-1)^i u_i u_{[-1 \cdots \hat{i} \cdots k+l]}).
\end{equation}
This completes the induction and hence the proof.
\end{proof}

\section{Regularization\label{sec:regul}}

Unfortunately the interpolation operators such as $\Pi_\calT^\bullet$ do not have sufficient continuity properties with respect to Sobolev norms, a problem which becomes more acute in higher dimensions. To remedy on this we construct regularization operators and show some applications to discrete Poincar\'e-Friedrichs inequalities and discrete Rellich compactness.

\subsection{Smooth manifolds}
Let $\calT$ be a simplicial complex and $V$ an affine space. Suppose we have a map $\rho : \calT^0 \to V$ and define for each $T \in \calT$ a map $\rho_T : T \to V$ by $\rho_T(i)= \rho(i)$; it is an affine realization of $T$ iff the affine span of $\rho_T(T)$ has dimension the dimension of $T$. We suppose now that this is the case for each $T$. Then we have an affine realization of $\calT$. We say that $\rho$ is non-degenerate if in addition, for each $T,T' \in \calT$:
\begin{equation}
|T| \cap |T'| = | T \cap T'|.
\end{equation}
In these circumstance we also say that $\rho$ determines an embedding of $|\calT|$ in $V$.

Consider a Riemannian manifold $M$, which for simplicity we suppose to be smooth, compact and without boundary. The dimension of $M$ is denoted $n$. The Riemannian metric gives rise to natural scalar products of forms on $M$.  

We suppose that we have built a simplicial complex $\calT$, with an affine realization, and for each $T\in \calT$ an embedding $\Xi_T:|T| \to M$. We require that for each $T,T'\in \calT$ if $T' \subset T$ we have:
\begin{equation}
\Xi_{T'}= \Xi_{T} \circ i_{TT'}.
\end{equation}
We suppose furthermore that for all $T,T'\in \calT$ the following condition on overlaps holds:
\begin{equation}
\Xi_{T}(|T|) \cap \Xi_{T'}(|T'|) = \Xi_{T''}(|T\cap T'|).
\end{equation}
Finally we suppose that $M$ is generated by $\calT$, i.e.:
\begin{equation}
M = \cup_{T \in \calT} \Xi_{T}(|T|).
\end{equation}

In fact we consider a countable family $(\calT_h)$ of simplicial complexes, indexed by a parameter $h$ representing the mesh width of $\calT_h$. This parameter runs trough a set of positive reals accumulating only at $0$, and  we are interested in the limit $h \to 0$. For notational simplicity we suppose that these simplicial complexes are two by two disjoint (the only reason is to assure that a given simplex ``remembers'' which simplicial complex it belongs to).

Given a complex $\Gamma^\bullet(\calT_h)$ of compatible differential forms, as in the preceding section, one can transport it to a complex of forms on $M$ denoted $\Gamma^\bullet_h(M)$, by pulling back by the maps $\Xi_T^{-1}$ for $T \in \calT_h$. These transported forms are $\rmL^2(M)$ (with respect to the Riemannian metric) and their exterior derivative in the sense of distributions coincides with the transported exterior derivative we defined on the affine realization of $\calT_h$, and is also $\rmL^2(M)$.

As is standard we denote by $\Omega^k(M)$ the space of smooth $k$-forms on $M$, whereas the space of piecewise smooth ones which are compatible with $\calT_h$ is denoted $\Omega^k_h(M)$.

\subsection{Construction of regularizations}

To fix notations we recall the construction of regularization by convolution. Let $\varphi$ denote a smooth non-negative function on $\bbR^n$ with support in the open unit ball, whose integral is $1$. For $\delta >0$ define $\varphi^\delta$ by :
\begin{equation}
\varphi^\delta (x) = \delta^{-n} \varphi(\delta^{-1} x).
\end{equation}
Thus $\varphi^\delta$ has integral $1$ and support in the open ball centered at the origin and with radius $\delta$.
 We let $R^\delta$  denote convolution by  $\varphi $, acting on differential forms on $\bbR^n$. That is:
\begin{equation}
(R^\delta u)(x) = \int \varphi^\delta (y) u(x-y) \rmd y,
\end{equation}

This regularization will be transferred to $M$ in three steps.
In accordance with widespread usage we will call the star of an element of $\calT_h^n$ (the star was defined in equation (\ref{eq:star})), a \emph{macroelement}. We suppose that there is a finite family $(S_i)_{i \in I}$ of $n$-dimensional macroelements embedded in $\bbR^n$ such that for each $h$ and each $T \in \calT^n_h$ we can choose a piecewise affine homeomorphism:
\begin{equation}
\Phi_T : |S_i| \to | \st (T)|.
\end{equation}
These will be called reference macroelements. For each $i\in I$ we denote by $T_i$ the central simplex of $S_i$ so that $S_i = \st(T_i)$. Let $\psi_i$ denote a smooth non-negative function on $\bbR^n$ with support in the interior of $|S_i|$ yet equal to $1$ on a neighborhood of $|T_i|$.

First we define regularization in the reference macroelements.  Define a regularization operator $R^\delta_i$ on $|S_i|$ by:
\begin{equation}\label{eq:regrefmacro}
R^\delta_i u =  (1- \psi_i)u + R^\delta (\psi_i u).
\end{equation}
This operator regularizes uniformly on $|T_i|$ and does not deregularize anywhere. We will always suppose that $\delta$ is so small that the $\delta$ neighborhood of $\supp\psi_i$ has adherence $A_i$ in the interior of $|S_i|$. Then $R^\delta_i u -u$ has support in $A_i$ for all $u$. 

Then we define regularization on the macroelements of $\calT_h$. For any $T\in \calT_h^n$, define $R^\delta_T$ by:
\begin{equation}
R^\delta_T u = (\Phi_T^{-1})^\star R^\delta_i (\Phi_T)^\star u.
\end{equation}
This notation is somewhat abusive, but $\Phi_T (A_i)$ is in the interior of $|\st(T)|$ and $(\Phi_T^{-1})^\star R^\delta_i (\Phi_T)^\star$ as an operator $\rmL^2(|\st(T)|) \to \rmL^2(|\st(T)|)$ does not modify its argument outside $\Phi_T (A_i)$; we can therefore extend it outside  $\Phi_T (A_i)$ by the identity operator.

Finally we order $\calT^n_h$, so its elements can be written $T_1,\ T_2, \cdots, T_N$ and define regularization $R^\delta_h$ on $M$ by:
\begin{equation}\label{eq:regprod}
R^\delta_h =  R^\delta_{T_N} \cdots R^\delta_{T_2} R^\delta_{T_1}.
\end{equation}
(The operator $R^\delta_h$ depends on the chosen order).

\subsection{Properties of the regularizations}

\paragraph{Approximation results.} 
For the rest of this section we consider a fixed (arbitrary) $k$. Unless explicitly stated otherwise we deal here with differential forms of degree $k$. We put $X_h  = \Gamma^k_h(M)$.

A key lemma is the following local stability result:
\begin{lemma}
For each $\delta$ there is a constant $C$ such that for all $h$, all $T \in \calT_h^n$ and all $u\in \rmL^2(\st(T))$ we have:
\begin{equation}
\|\Pi_h R^\delta_h u \|_{\rmL^2(T)} \leq C \| u\|_{\rmL^2(\st(T))}.
\end{equation} 
\end{lemma}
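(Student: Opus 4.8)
The plan is to exploit two features of the construction: the \emph{locality} of both $R^\delta_h$ and $\Pi_h$, which reduces the product of $N$ factors to a product of boundedly many (independent of $h$), and the \emph{smoothing} property of the reference regularizations, which makes the interpolator continuous in the only norm that survives after pull-back.

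First I would establish a locality statement: for $\delta$ small enough that each $A_i$ lies in the interior of $|S_i|$, the restriction $(R^\delta_h u)|_T$ depends only on $u|_{\st(T)}$. Each factor splits as $R^\delta_{T_j} = \id + K_j$, where, by the definition (\ref{eq:regrefmacro}) of $R^\delta_i$ and the support condition on $\psi_i$, the perturbation $K_j v$ is supported in the $\Phi_{T_j}$-image of the relevant reference set $A_i$, a compact subset of $\mathrm{int}\,|\st(T_j)|$, and, through the reference convolution of radius $\delta$, reads $v$ only on $|\st(T_j)|$. Tracing these read/write ranges through the ordered product (\ref{eq:regprod}), one sees that the value on $|T|$ can only be influenced by factors whose central simplex meets $T$, i.e.\ by those $T_j$ with $T_j \in \st(T)$; every other factor acts as the identity on a neighbourhood of the relevant region and may be discarded. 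Since $\Pi_h v|_T$ is itself determined by $v$ on $\overline{T} \subset |\st(T)|$ (the degrees of freedom defining $\Pi_h$ on $T$ being moments on $T$ and its faces), we conclude that $\Pi_h R^\delta_h u|_T$ depends only on $u|_{\st(T)}$ and only on the boundedly many factors indexed by $T_j \in \st(T)$.

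Next I would pull the reduced composition back to the reference configuration. Using $\Phi_T$, the maps $\Phi_{T_j}$ and the embeddings $\Xi_T$, the operator $u \mapsto \Pi_h R^\delta_h u|_T$ becomes a composition of a fixed, finite number of operators built from the reference data $R^\delta_i$, $\psi_i$ and the reference interpolator on the finitely many patterns $(S_i)_{i\in I}$. Because $I$ is finite and these reference operators are fixed, the only $h$-dependence enters through the change-of-variables Jacobians of the piecewise-affine homeomorphisms $\Phi_T$ and the embeddings, which are controlled uniformly in $h$ by the shape-regularity implicit in the hypothesis that every macroelement is the image of one of finitely many reference macroelements. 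Hence it suffices to bound, on the reference configuration, a composition of boundedly many fixed operators. Finally comes the core estimate: for fixed $\delta$ the convolution $R^\delta$, hence each $R^\delta_i$, is smoothing, so that differentiating the mollifier and using Young's inequality gives $\|R^\delta_i v\|_{C^m(|T_i|)} \leq C_{\delta,m}\|v\|_{\rmL^2(|S_i|)}$ for every $m$, and a composition of boundedly many such operators is again bounded from $\rmL^2$ into $C^m$; the reference interpolator, defined by integrating its argument (and finitely many derivatives) against fixed polynomial forms over $T_i$ and its faces, is continuous from $C^m$ into $\Lambda^k$ (resp.\ $\Gamma^k$) with an $\rmL^2(T_i)$ bound. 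Composing the two estimates and transporting back yields the constant $C$, depending on $\delta$ but not on $h$, $T$ or $u$.

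The main obstacle is the uniformity in $h$. The naive bound $\|\Pi_h R^\delta_h u\| \le \prod_j \|R^\delta_{T_j}\|$ is useless because the number of factors $N\to\infty$ as $h\to 0$; everything hinges on the locality step, which must genuinely confine the domain of dependence of $(R^\delta_h u)|_T$ to $\st(T)$ despite the non-commutativity of the ordered product, and on checking that the change-of-variables constants stay bounded under refinement. Once these two points are secured, the smoothing-plus-interpolation estimate is routine, as it takes place entirely in the fixed reference configuration.
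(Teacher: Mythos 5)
Your proof follows essentially the same route as the paper's: locality of the ordered product reduces matters to the boundedly many factors $R^\delta_{T_j}$ with $T_j \cap T \neq \emptyset$, the problem is pulled back via $\Phi_T$ to the finite family of reference macroelements, the fixed-$\delta$ smoothing bound $\| R^\delta_h u\|_{\rmC^r(|T_i|)} \leq C \|u\|_{\rmL^2(\st(T_i))}$ is established there, and one concludes with the continuity of $\Pi_h : \rmC^r(|T_i|) \to \rmL^2(|T_i|)$ and transport back to $M$. If anything, your write-up is more explicit than the paper's on the two points it treats tersely, namely the read/write support bookkeeping behind the claim that non-neighbouring factors may be discarded despite the non-commutativity of the product, and the uniform (in $h$) control of the change-of-variables constants coming from the finitely many reference patterns.
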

\begin{proof}
If we pull back to the reference macroelement $S_i$ assigned to $T$ by the map $\Phi_T$, and consider first the action of $R^\delta_h$ we see that we have several contributions corresponding to the different terms in the product (\ref{eq:regprod}). We have the action of $R^\delta_i$, which smoothes out on a neighborhood of $|T_i|$. But we also have the action of $R^\delta_j$ coming from the reference macroelements $S_j$ corresponding the neighboring simplexes of $T$ in $\calT_h^n$. Since the transition maps between reference macroelements are continuous piecewise affine, these actions do not destroy the property of being piecewise smooth and compatible on a neighborhood of $|T_i|$. Moreover the number of different possibilities for different macroelements to thus act upon each other is finite. For each integer $r$ we therefore have estimates of the form:
\begin{equation}
\| R^\delta_h u\|_{\rmC^r(|T_i|)} \leq C \|u\|_{\rmL^2(\st(T_i))}
\end{equation}
Then we use the continuity of $\Pi_h : \rmC^r(|T_i|) \to \rmL^2(|T_i|)$ and transport the estimate back to $M$.
\end{proof}

For each $\delta$ this lemma immediately implies uniform boundedness of the maps $\Pi_h R^\delta_h: \rmL^2(M) \to \rmL^2(M)$. But we actually have more:
\begin{theorem}\label{theo:L2conv}
For each $\delta$ and for each $u\in \rmL^2(M)$ we have:
\begin{equation}
\lim_{h \to 0} \| u - \Pi_h R^\delta_h u \|_{\rmL^2(M)} = 0,
\end{equation}
\end{theorem}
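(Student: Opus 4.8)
The plan is to combine the uniform boundedness furnished by the preceding Lemma with a density argument. First I would note that the Lemma, summed over $T \in \calT_h^n$ and using that each point of $M$ lies in only a bounded number of stars $\st(T)$ (a bound independent of $h$), yields $\|\Pi_h R^\delta_h u\|_{\rmL^2(M)} \leq C \|u\|_{\rmL^2(M)}$ with $C$ independent of $h$. Since the operators $\Pi_h R^\delta_h$ are thus uniformly bounded and the identity is bounded, a standard three-$\varepsilon$ argument reduces the claim to proving convergence for $u$ in a dense subset of $\rmL^2(M)$; I would take the smooth forms $\Omega^k(M)$. Indeed, if $v$ is smooth and $\|u-v\|_{\rmL^2(M)}$ is small, then $\|u - \Pi_h R^\delta_h u\| \leq (1+C)\|u-v\| + \|v - \Pi_h R^\delta_h v\|$, and it remains only to control the last term for fixed smooth $v$.

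For smooth $u$ the estimate is local, so I would bound $\|u - \Pi_h R^\delta_h u\|_{\rmL^2(T)}$ on each $T \in \calT_h^n$ and sum. Pulling back to the fixed reference macroelement $S_i$ assigned to $T$ and rescaling by the natural factor $h^{-k}$, Taylor expansion makes a smooth $k$-form a constant-coefficient form $u_0$ up to an error of size $O(h)$ in $\rmC^1(|S_i|)$. The argument then rests on two near-reproduction facts: convolution with a kernel of integral one fixes constant-coefficient forms wherever the cutoff $\psi_i$ is saturated, so $R^\delta_i$ reproduces $u_0$ on a neighbourhood of $|T_i|$; and the interpolator obeys the classical first-order estimate on smooth forms, reproducing constant-coefficient forms to order $O(h)$. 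Writing $u - \Pi_h R^\delta_h u = (u - u_0) + (u_0 - \Pi_h R^\delta_h u_0) + \Pi_h R^\delta_h(u_0 - u)$, I would bound the first term by Taylor's theorem, the middle term by the near-reproduction of $u_0$, and the last term by the Lemma applied to the small remainder $u_0 - u$. Each piece is $O(h \, \vol(T)^{1/2})$; summing over simplices with bounded overlaps gives $\|u - \Pi_h R^\delta_h u\|_{\rmL^2(M)} = O(h) \to 0$.

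The main obstacle is precisely the near-reproduction of constant-coefficient forms by the composite $\Pi_h R^\delta_h$. Each individual local regularization $R^\delta_{T'}$ leaves a constant form unchanged only on the central simplex of its own star; in the transition region of its cutoff it perturbs the form by an amount of size $\varphi^\delta \ast \psi - \psi$, which is $O(1)$ in the fixed-$\delta$ sense and does \emph{not} shrink with $h$. For a given $T$ the neighbouring regularizations appearing in the product (\ref{eq:regprod}) act exactly in such transition regions, so these perturbations must be shown to cancel to leading order; this is where a partition-of-unity character of the transported cutoffs is essential, the sum of the errors $\varphi^\delta \ast \psi - \psi$ telescoping to leave only an $O(h)$ net contribution. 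Making this cancellation, together with the attendant rescaling, rigorous and uniform in $h$ is the technical heart of the proof; once it is in place the interpolator absorbs the residual error and the summation over simplices is routine.
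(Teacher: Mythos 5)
Your first step is sound and is exactly the paper's: the local stability lemma plus a bounded-overlap summation gives $\|\Pi_h R^\delta_h\|_{\rmL^2(M)\to\rmL^2(M)}\leq C$ uniformly in $h$, reducing the theorem to a dense subset. (Your choice of smooth forms rather than the paper's dense class --- pullbacks of constant-coefficient forms times characteristic functions of compact Lipschitz sets --- is immaterial: both reduce, after localization, to the behaviour of $\Pi_h R^\delta_h$ on constant-coefficient forms, the paper handling the boundary strip of $\supp u$ by the local lemma since its $\rmL^2$ mass is $o(1)$, just as you absorb the Taylor remainder.) The genuine gap is in the step you yourself flag as ``the technical heart'': the near-reproduction of constant-coefficient forms is never proved, and the mechanism you propose for it would fail. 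You correctly observe that by (\ref{eq:regrefmacro}) a single factor sends a constant form $u_0$ to $u_0+(\varphi^\delta\ast\psi_i-\psi_i)u_0$ in its own frame, an error of fixed size (not decaying with $h$) supported in the transition shell of $\psi_i$, and that for $k\geq 1$ the shells of the neighbouring factors in the product (\ref{eq:regprod}) cross the $k$-simplices carrying the interpolation moments. But the telescoping cancellation you invoke rests on ``a partition-of-unity character of the transported cutoffs'' which the construction does not provide: there is one cutoff $\psi_i$ per reference macroelement, constrained only to equal $1$ near $|T_i|$ and vanish near $\partial|S_i|$, so nothing makes the transported cutoffs sum to a constant on $M$; and, more fundamentally, each factor $R^\delta_{T_m}$ convolves after pulling back under its own piecewise affine chart $\Phi_{T_m}$, so the leading errors from distinct factors live in mutually incompatible frames and do not combine into a single expression $(\varphi^\delta\ast\Psi-\Psi)u_0$ that could vanish. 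Since each shell error is $O(1)$ in $h$, your claimed bound $O(h\,\vol(T)^{1/2})$ per simplex, and hence the global rate $O(h)$, are unsupported --- note that the paper claims no rate, only convergence.

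For comparison, the paper disposes of precisely this step in one clause: for its dense class, convergence reduces to the finitely many fixed reference configurations, on which ``one can ensure convergence in the reference macroelement for constant functions.'' That is, the treatment of constants is an assertion about the fixed local operators on reference macroelements, not a quantitative perturbation estimate; it is exactly the statement your proof needs and does not contain. To complete your argument you would have to prove that reproduction-of-constants property --- for instance by constraining the cutoffs and the ordering in (\ref{eq:regprod}) so that the shell errors annihilate the moments $\mu_T$, or by replacing (\ref{eq:regrefmacro}) by a regularization that fixes constant forms exactly --- and without it the middle term $u_0-\Pi_h R^\delta_h u_0$ in your decomposition, hence the theorem itself, is out of reach. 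Everything else in your proposal (Taylor expansion, treatment of the remainder by the lemma, summation with bounded overlaps) is routine once that single local statement is in place, which is consistent with your own assessment, but as it stands the proposal identifies the crux rather than proving it.
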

\begin{proof}
Since the maps  $\Pi_h R^\delta_h : \rmL^2(M) \to \rmL^2(M)$ are uniformly bounded it is enough to prove the above estimate for a dense subset of $\rmL^2(M)$.

If $u$ is a pullback from an open subset $U \subset \bbR^n$ to $M$ of a differential form which is the product of an alternating multilinear map on $\bbR^n$ by the characteristic function of a compact subset of $U$ with nice enough boundary (say Lipschitz), then the norm convergence follows from the remark that  one can ensure convergence in  the reference macroelement for constant functions, and bound the contribution of the macroelements touching the boundary of $\supp u$, by the preceding local lemma.
\end{proof}

When $u$ is in a space compactly embedded in $\rmL^2(M)$ we can obtain uniform convergence estimates, as follows.
\begin{proposition}\label{prop:compnormconv}
For each Banach space $X$ which can be considered as a subspace of $\rmL^2(M)$ such that the canonical injection $X \to \rmL^ 2(M)$ is compact we have, for each $\delta$:
\begin{equation}
\lim_{h\to 0} \| I - \Pi_h R^\delta_h\|_{X \to \rmL^2(M)} = 0.
\end{equation}
\end{proposition}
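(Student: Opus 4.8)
The plan is to upgrade the pointwise convergence already obtained in Theorem \ref{theo:L2conv} to convergence in operator norm, exploiting the compactness of the injection $X \to \rmL^2(M)$. This rests on the classical principle that a uniformly bounded family of operators converging pointwise to zero on $\rmL^2(M)$ in fact converges \emph{uniformly} to zero on any relatively compact subset of $\rmL^2(M)$.

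First I would record the two ingredients that are already in place. The local stability lemma, together with the remark following it, gives a bound $\| \Pi_h R^\delta_h \|_{\rmL^2(M) \to \rmL^2(M)} \leq C_0$ with $C_0$ independent of $h$ (one sums the local estimate over $T \in \calT_h^n$ and uses that the stars $\st(T)$ overlap only a bounded number of times); hence the operators $I - \Pi_h R^\delta_h$ are uniformly bounded on $\rmL^2(M)$, say by $C = 1 + C_0$. Theorem \ref{theo:L2conv} supplies, for every fixed $u \in \rmL^2(M)$, the pointwise statement $\| (I - \Pi_h R^\delta_h) u \|_{\rmL^2(M)} \to 0$ as $h \to 0$.

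Then comes the main step. Let $B$ denote the closed unit ball of $X$. Since the injection $X \to \rmL^2(M)$ is compact, the image of $B$ is relatively compact in $\rmL^2(M)$, so for a given $\varepsilon > 0$ I can cover it by finitely many $\rmL^2$-balls of radius $\varepsilon/(2C)$ centred at points $u_1, \ldots, u_m$. Applying Theorem \ref{theo:L2conv} to each of the finitely many $u_j$ yields an $h_0$ such that $\| (I - \Pi_h R^\delta_h) u_j \|_{\rmL^2(M)} < \varepsilon/2$ for all $h < h_0$ and all $j$. For an arbitrary $u \in B$, choosing $u_j$ with $\| u - u_j \|_{\rmL^2(M)} < \varepsilon/(2C)$ and splitting
\[
\| (I - \Pi_h R^\delta_h) u \|_{\rmL^2(M)} \leq C \, \| u - u_j \|_{\rmL^2(M)} + \| (I - \Pi_h R^\delta_h) u_j \|_{\rmL^2(M)}
\]
bounds the left side by $\varepsilon$ for all $h < h_0$, uniformly in $u \in B$. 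Taking the supremum over $B$ gives $\| I - \Pi_h R^\delta_h \|_{X \to \rmL^2(M)} \leq \varepsilon$ for $h < h_0$, which is the assertion.

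There is no genuine obstacle: the substance has been front-loaded into the uniform boundedness lemma and into Theorem \ref{theo:L2conv}, and the proposition is their routine consequence via a finite-net argument. The only points demanding a little care are to form the net in $\rmL^2(M)$ (where the compactness lives) rather than in $X$, and to use the uniform bound $C$ to absorb the error incurred in approximating a general $u \in B$ by a net point.
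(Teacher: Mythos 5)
Your proof is correct, and in substance it is the paper's proof: both rest on the principle that a uniformly bounded family converging pointwise on $\rmL^2(M)$ converges in norm when composed with the compact injection $X \to \rmL^2(M)$, with Theorem \ref{theo:L2conv} supplying the pointwise convergence and the local stability lemma the uniform bound. The only divergence is in how that principle is established: the paper simply invokes Lemma \ref{lem:autom} from the appendix, whose proof approximates the compact operator $K$ in operator norm by finite-rank operators (citing Rudin), whereas you reprove the principle directly by covering the relatively compact image of the unit ball of $X$ with a finite $\varepsilon$-net and absorbing the approximation error via the uniform bound $C$. The two implementations are equivalent in this setting; your net argument is marginally more elementary and self-contained (it uses only total boundedness, not the approximation property of the Hilbert target), while the paper's route packages the fact as a reusable lemma that it also applies verbatim to Proposition \ref{prop:compcommut}. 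You were right to insist that the net be formed in $\rmL^2(M)$, where the compactness lives, and your uniform bound $C = 1 + C_0$ on $I - \Pi_h R^\delta_h$ is exactly the ingredient the paper extracts from the local stability lemma.
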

\begin{proof}
This follows from the preceding theorem, using Lemma \ref{lem:autom} in the appendices, letting  $(A_n)$ be the sequence $(\Pi_h R^\delta_h)$ indexed by $h$ for fixed $\delta$, and $K$ be the injection $X \to \rmL^2$.
\end{proof} 

The Galerkin spaces spaces $X_h$ are not invariant under $\Pi_h R^\delta_h$ -- but almost:
\begin{proposition}\label{prop:pseudostable}
For each $\epsilon$ there is $\delta'$ such that for each  $\delta < \delta'$ we have, for all $h$ and all $u_h \in X_h$: 
\begin{equation}
\| u_h - \Pi_h R^\delta_h u_h \|_{\rmL^2(M)} \leq \epsilon \| u_h \|_{\rmL^2(M)}.
\end{equation}
\end{proposition}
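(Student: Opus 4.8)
The plan is to reduce the global inequality to a scale-invariant local estimate on each top-dimensional simplex, and then exploit that, after pulling back to the reference macroelements, the finite-element forms lie in a fixed finite family of finite-dimensional spaces on which the convolutions $R^\delta$ tend to the identity as $\delta \to 0$.

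First I would localize the global norm. Since $\Pi_h R^\delta_h u_h$ on $|T|$ depends on $u_h$ only through its values on $|\st(T)|$ (exactly as in the local stability lemma), it suffices to establish a local bound of the form
\begin{equation}
\|u_h - \Pi_h R^\delta_h u_h\|_{\rmL^2(T)} \leq \eta(\delta)\, \|u_h\|_{\rmL^2(\st(T))}
\end{equation}
with a rate $\eta(\delta)\to 0$ that is \emph{uniform} in $h$ and $T\in\calT^n_h$. Summing the squares over $T$, and using that each top-dimensional simplex belongs to the star of only a bounded number $C_0$ of top-dimensional simplices (mesh regularity), gives $\|u_h - \Pi_h R^\delta_h u_h\|_{\rmL^2(M)} \leq \sqrt{C_0}\,\eta(\delta)\,\|u_h\|_{\rmL^2(M)}$; one then picks $\delta'$ so that $\sqrt{C_0}\,\eta(\delta)\leq\epsilon$ for all $\delta<\delta'$.

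To prove the local bound I would transport it to the reference macroelement $S_i$ assigned to $T$, pulling back by $\Xi_T$ and $\Phi_T$. Because the degrees of freedom defining $\Pi_h$ are integral moments, they are natural under the piecewise affine maps, so the transported operator is assembled from the reference interpolator $\Pi_i^{\mathrm{ref}}$ and the reference regularizations conjugated by the transition maps of the neighboring macroelements. The transported $u_h$ lies in the fixed finite-dimensional space $\calV_i$ of piecewise polynomial compatible $k$-forms on $|S_i|$, which is precisely the range of $\Pi_i^{\mathrm{ref}}$. Both sides of the inequality are $\rmL^2$ norms of $k$-forms and transform identically under these maps, up to metric-distortion factors that are bounded above and below uniformly (by mesh regularity and smoothness of $M$ and of the embeddings), so the estimate is scale invariant and the distortion is absorbed into the constant.

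It then remains to show, on the finite-dimensional space $\calV_i$, that the transported operator $\widehat{\Pi_h R^\delta_h}$ converges to the identity as $\delta\to 0$. Each convolution $R^\delta$ tends pointwise to the identity, and $\Pi_i^{\mathrm{ref}}$ restricts to the identity on its range $\calV_i$, so $\widehat{\Pi_h R^\delta_h}$ converges pointwise to $\mathrm{id}_{\calV_i}$; on a finite-dimensional space this upgrades automatically to convergence in operator norm. The main obstacle is the uniformity of the rate: here I would invoke the finiteness of the family of reference macroelements together with the finiteness of the possible neighbor configurations (and local orderings), so that $\eta(\delta)$, defined as the supremum of the finitely many configuration-wise rates, still tends to $0$. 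Finite-dimensionality is essential at this step, since without it $R^\delta\to I$ holds only strongly and yields no quantitative rate uniform in the forms.
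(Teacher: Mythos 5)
Your proposal is correct and follows essentially the same route as the paper's (much terser) proof: pull back to the fixed finite family of reference macroelements, observe that the pullbacks of elements of $X_h$ lie in a fixed finite-dimensional space on which $\Pi_h R^\delta_h$ converges to the identity uniformly as $\delta \to 0$, and transport the estimate back to $M$. Your additional details --- the localized bound on $\st(T)$ with bounded-overlap summation, the operator-norm upgrade of pointwise convergence on a finite-dimensional space, and the finiteness of neighbor configurations --- merely make explicit what the paper compresses into ``we can therefore find $\delta'$'' and ``this estimate is then transported back to $M$.''
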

\begin{proof}
For each reference macroelement the space of pullbacks of elements of $X_h$ for all $h$, is included in a finite dimensional space. We can therefore find $\delta'$ such that for all $i\in I$, all $\delta < \delta'$, all $h$ and all pullbacks $u_h$ to $S_i$ of elements of $X_h$ we have: 
\begin{equation}
\| u_h - \Pi_h R^\delta_h u_h \|_{\rmL^2(|T_i|)} \leq \epsilon \| u_h \|_{\rmL^2(|S_i|)}.
\end{equation}
 This estimate is then transported back to $M$.
\end{proof}

\paragraph{Commutator estimates.} The operator $\Pi_h R^\delta_h$ does not commute with the exterior derivative but the following results provide some remedies. They show that the commutator:
\begin{equation}
[\Pi_h R^\delta_h , \rmd] = \Pi_h R^\delta_h \rmd - \rmd \Pi_h R^\delta_h,
\end{equation}
enjoys properties similar to $I -\Pi_hR^\delta_h$. It is worth recalling here that $\Pi_h$ commutes with $\rmd$, as do pullback operations.

As before we first obtain a local stability result:
\begin{lemma}
For each $\delta$ there is a constant $C$ such that for all $h$, all $T \in \calT_h^n$ and all $u\in \rmL^2(\st(T))$ we have:
\begin{equation}
\|\Pi_h [R^\delta_h, \rmd] u \|_{\rmL^2(T)} \leq C \| u\|_{\rmL^2(\st(T))}.
\end{equation} 
\end{lemma}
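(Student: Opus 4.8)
The plan is to follow the proof of the preceding local stability lemma almost verbatim -- pulling back to the reference macroelement, exploiting that only finitely many geometric configurations occur, and closing with the continuity of $\Pi_h$ from a $\rmC^r$ space into $\rmL^2$. The single genuinely new ingredient is an algebraic identity for the elementary commutator that shows no derivative of $u$ ever survives.

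First I would record this identity. Writing the reference regularizer as $R^\delta_i u = (1-\psi_i) u + R^\delta(\psi_i u)$ and using that $R^\delta$, being a convolution, commutes with $\rmd$, together with the Leibniz rule, a direct computation gives
\begin{equation}
[R^\delta_i, \rmd] u = R^\delta_i \rmd u - \rmd R^\delta_i u = (I - R^\delta)(\rmd \psi_i \wedge u).
\end{equation}
The right-hand side involves $u$ only through multiplication by the fixed smooth form $\rmd\psi_i$, so the commutator, a priori defined only for forms $u$ admitting an exterior derivative, extends to a bounded operator on $\rmL^2$ and is of order zero in $u$. In the macroelements of $\calT_h$ the same identity holds in the coordinates furnished by $\Phi_T$.

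Next I would expand the product-commutator telescopically: since $R^\delta_h = R^\delta_{T_N} \cdots R^\delta_{T_1}$, one has $[R^\delta_h, \rmd] = \sum_j R^\delta_{T_N} \cdots R^\delta_{T_{j+1}} [R^\delta_{T_j}, \rmd] R^\delta_{T_{j-1}} \cdots R^\delta_{T_1}$, so each summand carries exactly one elementary commutator sandwiched between regularizations. By the identity above each such commutator is of \emph{zeroth order} in $u$, and since $\rmd\psi_j$ vanishes on a neighborhood of $|T_j|$ the operator $[R^\delta_{T_j},\rmd]$ is supported in the annular part of $|\st(T_j)|$ lying away from $|T_j|$. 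Pulling back by $\Phi_T$ to the reference macroelement $S_i$, one checks exactly as before that only finitely many configurations arise -- the transition maps between reference macroelements being continuous and piecewise affine -- and that in each summand the regularization acting on a neighborhood of $|T_i|$, whether it stands before or after the elementary commutator, provides the smoothing, while the summand whose commutator is centered at $T$ is supported away from $|T_i|$ once $\delta$ is small. This yields, for each $r$, an estimate $\| [R^\delta_h, \rmd] u \|_{\rmC^r(|T_i|)} \leq C \| u \|_{\rmL^2(\st(T_i))}$ with $C$ independent of $h$.

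Finally I would invoke the continuity of $\Pi_h : \rmC^r(|T_i|) \to \rmL^2(|T_i|)$ and transport the estimate back to $M$, precisely as in the earlier lemma. The main obstacle is the first step: because $u$ is merely $\rmL^2$, the expression $R^\delta_h \rmd u$ is a priori meaningless, and everything hinges on recognizing -- through the commutator identity -- that the exterior derivative falls entirely on the smooth cutoffs $\psi_i$ rather than on $u$. Once that is understood, preserving the $\rmC^r$ regularity on $|T_i|$ and keeping the constants uniform in $h$ is a routine repetition of the bookkeeping already performed for $R^\delta_h$ itself.
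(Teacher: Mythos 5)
Your proposal is correct and takes essentially the same route as the paper's own proof: the same identity $[R^\delta_i,\rmd]u=(I-R^\delta)((\rmd \psi_i)\wedge u)$ in the reference macroelement (showing $u$ is never differentiated and that the commutator vanishes near $|T_i|$), the same telescopic expansion of $[R^\delta_h,\rmd]$ into summands each containing one elementary commutator, the same observation that the summand centered at $T$ vanishes near $|T|$ while the remaining finitely many configurations contain the smoothing $R^\delta_T$ composed with factors that do not deregularize, and the same conclusion via the $\rmC^r(|T_i|)$ estimate, continuity of $\Pi_h:\rmC^r\to\rmL^2$, and transport back to $M$. Your closing remark that the commutator identity is precisely what gives $[R^\delta_h,\rmd]$ meaning on all of $\rmL^2$ makes explicit a point the paper leaves implicit, but it is not a different argument.
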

\begin{proof}
The following computation in the reference macroelement $S_i$ is useful:
\begin{eqnarray}
R^\delta_i \rmd u - \rmd R^\delta_i u & = & (1- \psi_i)\rmd u + R^\delta (\psi_i \rmd u) - \rmd ((1- \psi_i)u) - \rmd R^\delta (\psi_i u),\\
& = & (I - R^\delta)((\rmd \psi_i) \wedge u).
\end{eqnarray} 
It shows that $u$ is not differentiated. Remark also that $(\rmd \psi_i) \wedge u$ is equal to zero on a neighborhood of $|T_i|$. 

Next we remark that if we restrict attention to what happens on (a small neighborhood of) the simplex $T \in \calT^n_h$, the definition of $R^\delta_h$ can be taken to be:
\begin{equation}
R^\delta_h= R^\delta_{T_N} \cdots R^\delta_{T_2} R^\delta_{T_1},
\end{equation}
where \emph{only} the simplexes $T_m$ neighboring $T$ are used (i.e. $T_m \cap T \neq \emptyset$). Then we can write:
\begin{equation}
[R^\delta_h, \rmd] = \sum_{m=1}^N R^\delta_{T_N} \cdots R^\delta_{T_{m+1}}[R^\delta_{T_m}, \rmd]R^\delta_{T_{m-1}} \cdots R^\delta_{T_1}.
\end{equation}
In this sum the term where $T_m =T$ gives zero on a neighborhood of $|T|$, whereas the others constitute a small number (independent of $T$ and $h$) of products each containing the regularization $R^\delta_T$, composed (left and right) with terms which do not deregularize.

From this the lemma follows.
\end{proof}

The estimates for $\Pi_h [R^\delta_h , \rmd]$ then build up exactly as for $I -\Pi_hR^\delta_h$.

\begin{theorem}\label{theo:L2convcommut}
For each $\delta$ and for each $u\in \rmL^2(M)$ we have:
\begin{equation}
\lim_{h \to 0} \|\Pi_h [R^\delta_h, \rmd] u \|_{\rmL^2(M)} = 0,
\end{equation}
\end{theorem}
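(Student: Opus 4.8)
The plan is to follow the same three-step pattern that establishes Theorem \ref{theo:L2conv}, the only difference being that the limiting object is now $0$ rather than $u$. The preceding local lemma bounds $\|\Pi_h[R^\delta_h,\rmd]u\|_{\rmL^2(T)}$ by $C\|u\|_{\rmL^2(\st(T))}$ with $C$ independent of $h$; summing over $T\in\calT_h^n$ and using that each point of $M$ lies in a bounded number of stars, this yields a uniform bound $\|\Pi_h[R^\delta_h,\rmd]\|_{\rmL^2(M)\to\rmL^2(M)}\leq C$ for every fixed $\delta$. It therefore suffices to prove the convergence on the same dense subset of $\rmL^2(M)$ used in Theorem \ref{theo:L2conv}, namely finite combinations of forms $u$ which in a chart are a constant alternating $k$-form $c$ times the characteristic function of a compact set $K$ with Lipschitz boundary.

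For such a $u$ I would split $\calT_h^n$ into the simplexes $T$ whose star meets $\partial K$ and the rest. The simplexes touching $\partial K$ number $O(h^{-(n-1)})$, and for each of them the local lemma gives $\|\Pi_h[R^\delta_h,\rmd]u\|_{\rmL^2(T)}^2\leq C\,|c|^2\,\vol(\st(T))=O(h^n)$; summing, their total contribution is $O(h)\to 0$. This is precisely the boundary bookkeeping already carried out for $I-\Pi_hR^\delta_h$.

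The genuine content is the interior, where $u$ coincides with the locally constant form $c$ on a full neighbourhood of $|\st(T)|$. There $\rmd u=0$, so — in contrast with Theorem \ref{theo:L2conv} — the commutator must tend to $0$ rather than to $u$. I would isolate this as a sharpened, constant-data version of the local lemma: that there is a factor $\eta(h)\to 0$, uniform over interior $T$, with $\|\Pi_h[R^\delta_h,\rmd]c\|_{\rmL^2(T)}\leq \eta(h)\,\|c\|_{\rmL^2(\st(T))}$. Granting this and summing over the interior simplexes, again with bounded overlap of stars, bounds the interior contribution by $\eta(h)^2\,C\,\|c\|_{\rmL^2(K)}^2\to 0$, which together with the boundary estimate completes the proof.

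Establishing that constant-data estimate is where the difficulty lies, and it is the exact analogue of the remark used in Theorem \ref{theo:L2conv} that one can ``ensure convergence in the reference macroelement for constant functions''. Pulling back to a reference macroelement $S_i$ by $\Phi_T$, the computation in the local lemma reduces the commutator to $(I-R^\delta)(\rmd\psi_i\wedge(\Phi_T)^\star c)$ together with the finitely many neighbouring contributions; since $c$ is closed, $(\Phi_T)^\star c$ is closed, and each such term is the exterior derivative of a regularization error $(I-R^\delta)(\psi_i\,(\Phi_T)^\star c)$. As $h\to 0$ the transition maps between reference macroelements flatten towards affine maps, under which a genuinely constant form is reproduced exactly by $R^\delta$ and the commutator vanishes; the deviation from this flat limit is what furnishes $\eta(h)\to 0$. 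The main obstacle is thus to quantify this flattening and to check that, after the interpolation $\Pi_h$ (controlled by the local lemma in the $\rmC^r(|T_i|)\to\rmL^2(|T_i|)$ norm), the residual is $o(1)$ uniformly in $T$; the finiteness of the family of reference macroelements is exactly what makes the resulting $\eta(h)$ uniform.
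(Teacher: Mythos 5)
Your overall skeleton --- uniform $\rmL^2$ boundedness summed from the local commutator lemma, reduction by density to the same class of piecewise constant data, $O(h)$ boundary bookkeeping, and a constant-data estimate on interior simplexes --- is exactly the route the paper intends: its entire proof of this theorem is the remark that the estimates ``build up exactly as for $I-\Pi_h R^\delta_h$'', with the boundary contribution carried by the local lemma. The genuine gap is in your one load-bearing interior step. You propose that the smallness $\eta(h)\to 0$ comes from the transition maps flattening towards affine maps as $h\to 0$, ``under which a genuinely constant form is reproduced exactly by $R^\delta$ and the commutator vanishes''. That mechanism fails: the reference regularizer is not plain convolution but the cutoff construction $R^\delta_i u=(1-\psi_i)u+R^\delta(\psi_i u)$, whose commutator is $[R^\delta_i,\rmd]u=(I-R^\delta)\bigl((\rmd\psi_i)\wedge u\bigr)$. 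For a constant (hence closed) form $c$ this equals $(I-R^\delta)\bigl((\rmd\psi_i)\wedge c\bigr)$, a fixed and generically nonzero form on the reference macroelement, because $(\rmd\psi_i)\wedge c$ is \emph{not} constant and mollification does not reproduce it --- even when every map in sight is exactly affine and the datum genuinely constant. (Your identity that this term is the exterior derivative of the regularization error $(I-R^\delta)(\psi_i\,\hat c)$ is correct, but it gives no decay.) Since this object lives entirely in reference coordinates with $\delta$ fixed, it is independent of $h$; flattening of the transition maps removes only the $O(h)$ deviation of the geometry from the affine model, and the affine model itself already carries a nonvanishing commutator. Hence no $\eta(h)\to 0$ can be extracted this way, and your interior sum does not close.

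What replaces it, and what the paper's local lemma actually isolates, is a \emph{support} argument rather than a limiting one: $(\rmd\psi_i)\wedge u$ vanishes identically on a neighborhood of the central simplex $|T_i|$ (where $\psi_i\equiv 1$), and $(I-R^\delta)$ enlarges supports by at most $\delta$, so $[R^\delta_T,\rmd]$ is \emph{exactly} zero on a neighborhood of $|T|$ --- not merely small in a flat limit. In the expansion $[R^\delta_h,\rmd]=\sum_m R^\delta_{T_N}\cdots[R^\delta_{T_m},\rmd]\cdots R^\delta_{T_1}$, the self term $T_m=T$ therefore drops out near $|T|$, and all the content of the constant-data estimate on $T$ sits in the finitely many neighbor terms, supported in the annular regions where the cutoffs vary. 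These the paper controls through the smoothing action of $R^\delta_T$, the $\rmC^r(|T_i|)\to\rmL^2(|T_i|)$ continuity of $\Pi_h$, and the finiteness of the family of reference configurations; any smallness for them on $|T|$ must come from this localization and from which degrees of freedom of $\Pi_h$ on $T$ the annuli actually meet, not from $h\to 0$ flattening. So your three outer steps coincide with the paper's scheme, but the crucial interior estimate rests on a computation that is false for this regularizer, and the proof as proposed does not go through.
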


\begin{proposition}\label{prop:compcommut}
For each Banach space $X$ which can be considered as a subspace of $\rmL^2(M)$ such that the canonical injection $X \to \rmL^ 2(M)$ is compact we have, for each $\delta$:
\begin{equation}
\lim_{h\to 0} \| \Pi_h [R^\delta_h, \rmd] \|_{X \to \rmL^2(M)} = 0.
\end{equation}
\end{proposition}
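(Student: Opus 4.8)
The plan is to follow verbatim the argument already used for Proposition \ref{prop:compnormconv}, simply replacing the operator $I - \Pi_h R^\delta_h$ by the commutator $\Pi_h [R^\delta_h, \rmd]$. All the required ingredients are in place: the local stability lemma preceding Theorem \ref{theo:L2convcommut} bounds $\|\Pi_h [R^\delta_h, \rmd] u\|_{\rmL^2(T)}$ by $C \|u\|_{\rmL^2(\st(T))}$, uniformly in $h$ and in $T \in \calT_h^n$, while Theorem \ref{theo:L2convcommut} asserts that $\Pi_h [R^\delta_h, \rmd] u \to 0$ in $\rmL^2(M)$ for each fixed $\delta$ and each fixed $u$.

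First I would record that, for fixed $\delta$, the family $\Pi_h [R^\delta_h, \rmd] : \rmL^2(M) \to \rmL^2(M)$ is uniformly bounded in $h$. Since the top-dimensional simplexes $T \in \calT_h^n$ cover $M$ and overlap only in sets of measure zero, one has $\|\Pi_h [R^\delta_h, \rmd] u\|_{\rmL^2(M)}^2 = \sum_{T} \|\Pi_h [R^\delta_h, \rmd] u\|_{\rmL^2(T)}^2$; squaring the local estimate and summing gives a bound by $C^2 \sum_T \|u\|_{\rmL^2(\st(T))}^2$. Because each point of $M$ lies in $|\st(T)|$ for only a bounded number of $T$ (the stars have uniformly finite overlap, there being finitely many reference macroelement types), this last sum is controlled by a constant times $\|u\|_{\rmL^2(M)}^2$, yielding $\|\Pi_h [R^\delta_h, \rmd]\|_{\rmL^2(M) \to \rmL^2(M)} \leq C$ independently of $h$.

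Then, treating the countable index set of values of $h$ (which accumulates only at $0$) as a sequence, I would invoke Lemma \ref{lem:autom} of the appendix, taking $(A_n)$ to be the uniformly bounded sequence $(\Pi_h [R^\delta_h, \rmd])$ for fixed $\delta$ and taking $K$ to be the compact canonical injection $X \to \rmL^2(M)$. That lemma converts the \emph{pointwise} convergence $A_n u \to 0$ supplied by Theorem \ref{theo:L2convcommut} into \emph{operator-norm} convergence $\|A_n K\|_{X \to \rmL^2(M)} \to 0$, which is exactly the assertion of the proposition.

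There is no genuine obstacle here; the entire content is packaged in the abstract Lemma \ref{lem:autom}, whose mechanism is that strong convergence to zero of a uniformly bounded sequence of operators becomes uniform convergence on the compact image of the unit ball of $X$. The only steps demanding (routine) care are the verification of that lemma's two hypotheses, namely the uniform boundedness established above and the strong convergence of Theorem \ref{theo:L2convcommut}; once both are secured the conclusion follows automatically, in complete parallel with Proposition \ref{prop:compnormconv}.
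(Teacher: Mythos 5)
Your proposal is correct and follows essentially the same route as the paper, which proves this proposition exactly as Proposition \ref{prop:compnormconv}: apply Lemma \ref{lem:autom} with $(A_n)$ the sequence $(\Pi_h [R^\delta_h, \rmd])$ for fixed $\delta$, $A=0$, and $K$ the compact injection $X \to \rmL^2(M)$, the strong convergence being supplied by Theorem \ref{theo:L2convcommut}. Your explicit verification of uniform boundedness via the local stability lemma and the finite overlap of stars is the same mechanism the paper relies on (the paper notes it ``builds up exactly as'' in the non-commutator case, where uniform boundedness is likewise drawn from the local lemma), so there is nothing to add.
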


\begin{proposition}\label{prop:pseudostablecommut}
For each $\epsilon$ there is $\delta'$ such that for each  $\delta < \delta'$ we have, for all $h$ and all $u_h \in X_h$: 
\begin{equation}
\|\Pi_h [R^\delta_h, \rmd] u_h \|_{\rmL^2(M)} \leq \epsilon \| u_h \|_{\rmL^2(M)}.
\end{equation}
\end{proposition}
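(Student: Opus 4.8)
The plan is to follow the proof of Proposition \ref{prop:pseudostable} essentially verbatim, with $I - \Pi_h R^\delta_h$ replaced throughout by $\Pi_h [R^\delta_h, \rmd]$. The two ingredients to check are that the same reduction to a fixed finite-dimensional space in a reference macroelement still applies, and that on that space the relevant operator tends to $0$ as $\delta \to 0$.

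First I would localize to a reference macroelement. Fixing $T \in \calT_h^n$ and pulling back by $\Phi_T$ to the assigned $S_i$, the preceding local lemma shows that only the finitely many simplexes neighbouring $T$ enter the definition of $R^\delta_h$ near $|T|$. After pullback, the data involved (the central regularizer $R^\delta_i$, the neighbouring regularizers, the transition maps, and the pulled-back interpolator $\Pi_h$) realize only finitely many combinatorial patterns, none depending on $h$ beyond this finite list. Moreover, exactly as in Proposition \ref{prop:pseudostable}, the pullbacks to $S_i$ of elements of $X_h$, ranging over all $h$, are contained in a single finite-dimensional space $W_i \subset \rmL^2(|S_i|)$.

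Next I would establish pointwise decay in $\delta$. Using the Leibniz-type expansion of the local lemma,
\begin{equation}
[R^\delta_h, \rmd] = \sum_{m=1}^N R^\delta_{T_N} \cdots R^\delta_{T_{m+1}}[R^\delta_{T_m}, \rmd]R^\delta_{T_{m-1}} \cdots R^\delta_{T_1},
\end{equation}
together with the identity $[R^\delta_i, \rmd] u = (I - R^\delta)((\rmd \psi_i) \wedge u)$, each summand is a composition of uniformly $\rmL^2$-bounded factors (the $R^\delta_{T_j}$ do not deregularize) with a single factor of the form $(I - R^\delta)(\cdots)$. Since $R^\delta \to I$ strongly on $\rmL^2$ as $\delta \to 0$ and $(\rmd \psi_i) \wedge u \in \rmL^2$ for $u \in W_i$, each summand tends to $0$ in $\rmL^2$; applying the (fixed, $\rmC^r$-to-$\rmL^2$ continuous) pulled-back $\Pi_h$ preserves this, so $\Pi_h [R^\delta_h, \rmd] u \to 0$ in $\rmL^2(|T_i|)$ for each fixed $u \in W_i$. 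Note no control of derivatives of $u$ is needed: the commutator identity already replaces differentiation of $u$ by the soft factor $(I - R^\delta)$.

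Finally I would upgrade pointwise convergence to uniform convergence. Because $W_i$ is finite-dimensional and the localized operators range over finitely many patterns, pointwise convergence to $0$ on $W_i$ is automatically uniform on the unit sphere of $W_i$; hence there is $\delta'_i$ with $\|\Pi_h [R^\delta_h, \rmd] u\|_{\rmL^2(|T_i|)} \leq \epsilon \|u\|_{\rmL^2(|S_i|)}$ for all $\delta < \delta'_i$, all $h$, and all pullbacks $u$ of elements of $X_h$. Taking $\delta' = \min_{i \in I} \delta'_i$ (a finite minimum) and transporting back to $M$ by summing over $T \in \calT_h^n$ yields the claim. The main obstacle is exactly the uniformity in $h$: one must verify that, after pullback, both the domain $W_i$ and the operator collapse to finitely many $h$-independent objects, so that the finite-dimensional argument applies simultaneously for all $h$. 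Once this structural reduction is secured, the remainder is the routine finite-dimensional fact that pointwise convergence is uniform.
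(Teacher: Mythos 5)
Your proposal is correct and takes essentially the same route as the paper: the paper offers no separate argument for Proposition \ref{prop:pseudostablecommut}, stating only that the estimates for $\Pi_h [R^\delta_h, \rmd]$ ``build up exactly as for $I - \Pi_h R^\delta_h$'', i.e.\ the finite-dimensional reduction of Proposition \ref{prop:pseudostable} combined with the local commutator lemma and the identity $[R^\delta_i, \rmd] u = (I - R^\delta)((\rmd \psi_i) \wedge u)$, which is precisely what you carried out. Your explicit points --- the finitely many $h$-independent reference patterns, the fixed finite-dimensional space $W_i$ of pullbacks, and the fact that pointwise decay in $\delta$ upgrades to uniform decay on $W_i$ --- are exactly the details the paper leaves implicit.
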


\subsection{Applications}
While the properties of the operator $\Pi_h R^\delta_h$ may seem weak -- it does not commute with the exterior derivative, nor does it leave the Galerkin space invariant -- we show that they are strong  enough to prove some central estimates in numerical analysis.
Specifically we are interested in estimates on discrete analogues of Hodge decompositions.

For any $k$ the space of harmonic $k$-forms is the space:
\begin{equation}
\calH^k(M) = \{u \in \Omega^k(M) \ : \ \rmd u = 0 \myand \forall v \in \Omega^{k-1}(M) \quad \int u \cdot \rmd v = 0 \}.
\end{equation}
On the other hand the space of discrete harmonic $k$-forms is:
\begin{equation}
\calH^k_h(M) = \{u \in \Gamma^k_h(M) \ : \ \rmd u = 0 \myand \forall v \in \Gamma^{k-1}_h(M) \quad \int u \cdot \rmd v = 0 \}.
\end{equation}

For the rest of this section we consider a fixed (arbitrary) $k$, and put $X_h  = \Gamma^k_h(M)$. We will use also use the following notations:
\begin{eqnarray}
W_h & = & \{u_h \in X_h \ : \ \rmd u_h = 0 \}\\
V_h & = & \{u_h \in X_h \ : \ \forall w_h \in W_h \quad \ts \int u_h \cdot w_h = 0 \}
\end{eqnarray}

Similarly we let $X$ be the completion of $\Omega^k(M)$ with respect to the norm defined by: 
\begin{equation}
\|u\|_X^2 =  \| u\|_{\rmL^2}^2 + \| \rmd u\|_{\rmL^2}^2.
\end{equation}
Then we put:
\begin{eqnarray}
W & = & \{u \in X \ : \ \rmd u = 0 \}\\
V & = & \{u \in X \ : \ \forall w \in W \quad \ts \int u \cdot w = 0 \}
\end{eqnarray}
It will be useful to let $P_V$ denote the projection in $X$ with range $V$ and kernel $W$.

The continuous Poincar\'e-Friedrichs inequality is the assertion that there is a $C>0$ such that:
\begin{equation}
\forall v \in V \quad \|v\|_{\rmL^2} \leq C \| \rmd v \|_{\rmL^2}.
\end{equation} 
Since the manifold $M$ is supposed to be compact (without boundary) $V$ is a subspace of the Sobolev space $\rmH^1(M)$. In particular $V$ is compactly embedded in $\rmL^2$, which is a variant of Rellich compactness. A major point is that in general $V_h$ is \emph{not} a subspace of $V$. However we will prove analogues of the Poincar\'e-Friedrich and Rellich properties for the family of spaces $(V_h)$.

The following is a discrete Poincar\'e-Friedrichs inequality. 
\begin{proposition}\label{prop:discfried}
There is $C > 0$ such that for all $h$ we have:
\begin{equation}
\forall v_h \in V_h \quad \|v_h\|_{\rmL^2} \leq C \| \rmd v_h \|_{\rmL^2}.
\end{equation} 
\end{proposition}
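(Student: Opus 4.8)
The plan is to adapt the Arnold--Falk--Winther argument for bounded cochain projections, with $\Pi_h R^\delta_h$ playing the role of an \emph{approximate} uniformly bounded projection that \emph{approximately} commutes with $\rmd$. Since for each fixed $h$ the space $V_h$ is finite dimensional and $\rmd$ is injective on it (if $v_h\in V_h$ and $\rmd v_h=0$ then $v_h\in W_h\cap V_h=\{0\}$), there is a finite best constant $C_h^\star$ with $\|v_h\|_{\rmL^2}\le C_h^\star\|\rmd v_h\|_{\rmL^2}$ for all $v_h\in V_h$. It therefore suffices to show $\sup_h C_h^\star<\infty$, and because the set of mesh widths accumulates only at $0$, it is enough to bound $C_h^\star$ for all sufficiently small $h$; the remaining finitely many values contribute finite constants.

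First I would lift to the continuous level. Given $v_h\in V_h\subset X$, set $u=P_V v_h\in V$; then $\rmd u=\rmd v_h$ and the continuous Poincar\'e--Friedrichs inequality stated above gives $\|u\|_{\rmL^2}\le C_0\|\rmd v_h\|_{\rmL^2}$, hence also $\|u\|_X\le C_1\|\rmd v_h\|_{\rmL^2}$. I then push $u$ back into $X_h$ and set $z_h=\Pi_h R^\delta_h u-v_h\in X_h$. Writing $z_h=z_h^V+z_h^W$ for its $\rmL^2$-orthogonal splitting into $V_h$ and $W_h$ components and using $v_h\perp W_h$, one gets
\begin{equation}
\|v_h\|_{\rmL^2}^2=\langle v_h,\Pi_h R^\delta_h u\rangle-\langle v_h,z_h^V\rangle,
\end{equation}
where $\langle\cdot,\cdot\rangle$ denotes the $\rmL^2$ product. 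The first term is controlled by the uniform $\rmL^2$-boundedness of $\Pi_h R^\delta_h$ coming from the first local stability lemma, giving $|\langle v_h,\Pi_h R^\delta_h u\rangle|\le C\,\|v_h\|_{\rmL^2}\|u\|_{\rmL^2}\le CC_0\|v_h\|_{\rmL^2}\|\rmd v_h\|_{\rmL^2}$. Thus $\|v_h\|_{\rmL^2}\le CC_0\|\rmd v_h\|_{\rmL^2}+\|z_h^V\|_{\rmL^2}$.

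The heart of the matter is to estimate $\|z_h^V\|_{\rmL^2}$ in terms of $\|\rmd v_h\|_{\rmL^2}$. Since $z_h^W$ is closed, $\rmd z_h=\rmd z_h^V$, and using $\rmd\Pi_h=\Pi_h\rmd$,
\begin{equation}
\rmd z_h=\big(\Pi_h R^\delta_h\rmd v_h-\rmd v_h\big)-\Pi_h[R^\delta_h,\rmd]u.
\end{equation}
The first bracket is a purely discrete quantity ($\rmd v_h\in\Gamma^{k+1}_h(M)$) and is bounded by Proposition \ref{prop:pseudostable} (applied in degree $k+1$) by $\epsilon_1(\delta)\|\rmd v_h\|_{\rmL^2}$ with $\epsilon_1(\delta)\to0$ as $\delta\to0$, uniformly in $h$. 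For the commutator term, $u$ lies in $V$, which is compactly embedded in $\rmL^2$ (Rellich, as noted above), so Proposition \ref{prop:compcommut} gives $\|\Pi_h[R^\delta_h,\rmd]u\|_{\rmL^2}\le\eta(h)\,\|u\|_X\le\eta(h)C_1\|\rmd v_h\|_{\rmL^2}$ with $\eta(h)\to0$ as $h\to0$. Applying the (a priori finite) discrete Poincar\'e constant to $z_h^V\in V_h$, namely $\|z_h^V\|_{\rmL^2}\le C_h^\star\|\rmd z_h\|_{\rmL^2}$, and combining, I obtain $\|v_h\|_{\rmL^2}\le\big(CC_0+C_h^\star M_\delta(h)\big)\|\rmd v_h\|_{\rmL^2}$ with $M_\delta(h)=\epsilon_1(\delta)+\eta(h)C_1$. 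Taking the supremum over $v_h\in V_h$ yields the absorption inequality $C_h^\star\big(1-M_\delta(h)\big)\le CC_0$.

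The main obstacle, and the reason the proof must be organized this way, is exactly this circular appearance of $C_h^\star$: the commutator acts on the continuous lift $u=P_V v_h$ rather than on a discrete form, so Proposition \ref{prop:pseudostablecommut} (valid only on $X_h$) cannot be used to make that term uniformly small in $h$ by a single choice of $\delta$. One is forced to exploit the compactness of $V\hookrightarrow\rmL^2$ through Proposition \ref{prop:compcommut}, which yields smallness only as $h\to0$. To close the argument I would first fix $\delta$ so small that $\epsilon_1(\delta)\le\tfrac14$, then choose $h_0$ so that $\eta(h)C_1\le\tfrac14$ for $h<h_0$; for such $h$ one has $M_\delta(h)\le\tfrac12$ and the absorption inequality gives $C_h^\star\le2CC_0$, while the finitely many $h\ge h_0$ are handled by their individual finite constants. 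The only points needing care are the uniform $\rmL^2$-bound on $\Pi_h R^\delta_h$ obtained by summing the local stability lemma over $\calT_h^n$ with bounded overlap, and the equivalence, on $V$, of the $X$-norm with a norm for which the embedding $V\hookrightarrow\rmL^2$ is compact.
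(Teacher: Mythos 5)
Your proof is correct, but it takes a genuinely different route from the paper's. The paper also starts from the lift $P_V v_h$ (noting $\rmd P_V v_h = \rmd v_h$), but then invokes the continuous Hodge decomposition $v_h - P_V v_h = \rmd u + f$, with the potential $u$ controlled in a space compactly embedded in $\rmL^2$ and $f$ harmonic, and exploits $v_h \perp W_h$ by comparing $v_h$ with the discrete closed competitor $\rmd \Pi_h R^\delta_h u + \Pi_h f \in W_h$; the resulting terms are handled by Proposition \ref{prop:compnormconv} (for $P_V v_h$), by Proposition \ref{prop:pseudostable} in degree $k$ applied to $v_h$ itself with $\epsilon = 1/2$, by Theorem \ref{theo:L2conv} together with the smoothness and finite-dimensionality of harmonic forms, and by Proposition \ref{prop:compcommut} applied to the potential $u$. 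You dispense with Hodge theory altogether --- no harmonic forms, no elliptic control of a potential: you form the discrete defect $z_h = \Pi_h R^\delta_h P_V v_h - v_h$, split it orthogonally in $X_h = V_h \oplus W_h$, and close a bootstrap on the a priori finite best constant $C_h^\star$, using Proposition \ref{prop:pseudostable} in degree $k+1$ (applied to $\rmd v_h \in \Gamma^{k+1}_h(M)$, legitimate since $k$ is arbitrary throughout \S \ref{sec:regul}; one only needs to take the smaller of the two thresholds $\delta'$ for degrees $k$ and $k+1$) and Proposition \ref{prop:compcommut} with the Banach space $V$. Your self-referential absorption $C_h^\star\bigl(1 - M_\delta(h)\bigr) \leq C C_0$ is valid precisely because $C_h^\star < \infty$ a priori, which you correctly justify by finite dimensionality and injectivity of $\rmd$ on $V_h$; the paper absorbs an $\epsilon'_h \|v_h\|$ term instead, which is the same absorption packaged without reference to a best constant. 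What each approach buys: yours has lighter input --- only the continuous Poincar\'e--Friedrichs inequality on $V$ and the compact (Gaffney--Rellich) embedding $V \hookrightarrow \rmL^2$, both of which the paper uses anyway --- while the paper's Hodge-theoretic decomposition produces objects and estimates that are reused in the sharper Proposition \ref{prop:gap}. Indeed your identity $\rmd z_h = (\Pi_h R^\delta_h - I)\rmd v_h - \Pi_h [R^\delta_h, \rmd] P_V v_h$ anticipates the commutator identity the paper deploys only later, in the proof of Proposition \ref{prop:gap}; and since you never invoke Corollary \ref{cor:inffried}, whose proof depends on the present proposition, there is no circularity in your argument.
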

\begin{proof}
Pick $v_h \in V_h$. We remark that $\rmd P_V v_h = \rmd v_h$. Let then:
\begin{equation}\label{eq:hodgedec}
v_h - P_V v_h = \rmd u + f,
\end{equation}
be the Hodge decomposition of $v_h -P_V v_h$ (see e.g. Taylor \cite{Tay96} Chapter 5, \S 8, Proposition 8.2). That is, $f$ is harmonic and the only thing we need to know about $u$ is that its norm with respect to a space compactly embedded in $\rmL^2$, is controlled by the $\rmL^2$ norm of $v_h - P_V v_h$, hence also by the $\rmL^2$ norm of $v_h$.  Remark that $\rmd \Pi_h R^\delta_h u + \Pi_h f\in W_h$. In the following, all unspecified norms are $\rmL^2$ norms. We write:
\begin{eqnarray}
\| v_h\| & \leq & \|v_h - \rmd \Pi_h R^\delta_h u - \Pi_h f \|,\\
& \leq & \| P_V v_h + \rmd u + f - \rmd \Pi_h R^\delta_h u - \Pi_h f \|,\\
& \leq & \|P_V v_h + \rmd u - \Pi_h R^\delta_h \rmd u + \Pi_h R^\delta_h \rmd u - \rmd \Pi_h R^\delta_h u + f - \Pi_h f \|,\\
& \leq & \|P_V v_h\| + \| (I- \Pi_h R^\delta_h)\rmd u \| + \| \Pi_h [R^\delta_h, \rmd ] u \| + \| (I - \Pi_h)f \|.
\end{eqnarray}
We treat separately the three last terms on the right hand side.

1) Concerning the second term we remark that, by the definition (\ref{eq:hodgedec}):
\begin{equation}
\| (I- \Pi_h R^\delta_h)\rmd u \| \leq   \| (I- \Pi_h R^\delta_h)P_V v_h \| + \| (I- \Pi_h R^\delta_h) v_h \| + \| (I- \Pi_h R^\delta_h) f \|.
\end{equation}
In this estimate we have three terms. The first term is controlled by:
\begin{equation}
\| (I- \Pi_h R^\delta_h)P_V v_h \| \leq \| (I- \Pi_h R^\delta_h)\|_{V \to \rmL^2} \| P_V v_h\|_X,
\end{equation}
and we are ready to apply Proposition \ref{prop:compnormconv}. For the second term we choose now $\delta>0$ such that the estimate of Proposition \ref{prop:pseudostable} holds with $\epsilon = 1/2$.  The last term is controlled using Theorem \ref{theo:L2conv} and the finite-dimensionality of the space of harmonic forms. Combining these three estimates we get, for a certain sequence $\epsilon_h$ converging to $0$:
\begin{equation}
\| (I- \Pi_h R^\delta_h)\rmd u \| \leq  1/2 \|v_h\| + \epsilon_h (\|v_h\| + \| \rmd v_h \|).
\end{equation}

2) As already remarked, $u$ is controlled in a space $Y$ (e.g. the Sobolev space $\rmH^1(M)$) which is compactly embedded in $\rmL^2$.
For the third term we can write:
\begin{equation}
\| \Pi_h [R^\delta_h, \rmd ] u \| \leq \| \Pi_h [R^\delta_h, \rmd ]\|_{Y \to \rmL^2}C\|v_h\|_{\rmL^2},
\end{equation}
and are ready to apply Proposition \ref{prop:compcommut}.

3) For the fourth term we use the smoothness of harmonic forms, and the finite-dimensionality of the space they form. 

Combining all three estimates we conclude that there is a sequence $(\epsilon'_h)$ converging to $0$ such that:
\begin{equation}
1/2 \| v_h\| \leq \|P_V v_h\| + \epsilon'_h (\|v_h\| + \| \rmd v_h \|) \leq (C + \epsilon'_h)\|\rmd v_h\| + \epsilon'_h \| v_h\|. 
\end{equation}
This proves the proposition.
\end{proof}

An easy consequence which will be useful later is the following:
\begin{corollary}\label{cor:inffried}
There is $C > 0$ such that for all $h$ we have:
\begin{equation}
\forall u_h \in X_h \quad \inf_{w_h \in W_h}\|u_h - w_h\|_{\rmL^2} \leq C \| \rmd u_h \|_{\rmL^2}.
\end{equation} 
\end{corollary}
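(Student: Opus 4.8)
The plan is to reduce the statement to the discrete Poincaré--Friedrichs inequality of Proposition \ref{prop:discfried} by performing an $\rmL^2$-orthogonal splitting of $u_h$ inside the finite dimensional space $X_h$. Since $X_h = \Gamma^k_h(M)$ is finite dimensional, $W_h$ is a closed subspace, and by the very definition of $V_h$ as its $\rmL^2$-orthogonal complement within $X_h$ we have the orthogonal direct sum $X_h = W_h \oplus V_h$. First I would therefore write, for a given $u_h \in X_h$, the decomposition $u_h = w_h + v_h$ with $w_h \in W_h$ and $v_h \in V_h$, where $w_h$ is the $\rmL^2$-orthogonal projection of $u_h$ onto $W_h$.

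The key observation is then twofold. On the one hand, since $w_h \in W_h$ is an admissible competitor in the infimum, the distance of $u_h$ to $W_h$ is bounded by the length of $v_h$:
\begin{equation}
\inf_{w'_h \in W_h} \| u_h - w'_h \|_{\rmL^2} \leq \| u_h - w_h \|_{\rmL^2} = \| v_h \|_{\rmL^2}.
\end{equation}
On the other hand, because $w_h \in W_h$ means $\rmd w_h = 0$, we have $\rmd v_h = \rmd u_h$. Applying Proposition \ref{prop:discfried} to $v_h \in V_h$ then gives $\|v_h\|_{\rmL^2} \leq C \|\rmd v_h\|_{\rmL^2} = C \|\rmd u_h\|_{\rmL^2}$, with the same constant $C$ uniform in $h$. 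Chaining these inequalities yields the claim.

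There is essentially no serious obstacle here: the only points requiring a word of justification are the existence of the orthogonal splitting $X_h = W_h \oplus V_h$, which is immediate from finite dimensionality, and the identity $\rmd v_h = \rmd u_h$, which uses only $\rmd w_h = 0$. Note that one need not even invoke that $w_h$ \emph{minimizes} the distance: any $w_h \in W_h$ with $u_h - w_h \in V_h$ suffices for the displayed inequality. The uniformity of $C$ in $h$ is inherited directly from Proposition \ref{prop:discfried}, so nothing further is needed.
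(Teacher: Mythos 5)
Your proof is correct and is precisely the argument the paper leaves implicit (the corollary is stated as ``an easy consequence'' of Proposition \ref{prop:discfried} without proof): orthogonally split $u_h = w_h + v_h$ in $X_h = W_h \oplus V_h$, note $\rmd v_h = \rmd u_h$, and apply the discrete Poincar\'e--Friedrichs inequality to $v_h$. Nothing is missing, and your remark that the uniformity of $C$ in $h$ is inherited directly is exactly right.
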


The discrete Poincar\'e-Friedrich estimate also provides useful so-called Inf-Sup conditions for saddle-point problems, in the sense of Brezzi \cite{Bre74}. Consider for instance the following construction: A Fortin operator $\Phi_h: X \to X_h$ can be constructed by assigning to $u\in X$ the unique element $u_h \in X_h$, solution of:
\begin{equation}
\mixed{u_h}{p_h}{X_h}{\rmd X_h}{u_h'}{p_h'}{\int u_h \cdot u_h' + \int p_h \cdot \rmd u_h' = \int u \cdot u_h'}{\int p_h'\cdot \rmd u_h= \int p_h' \cdot \rmd u}
\end{equation}
The announced Inf-Sup condition is:
\begin{corollary}
There is $C > 0$ such that for all $h$ we have:
\begin{equation}
\inf_{p_h \in \rmd X_h}\sup_{u_h \in X_h} \frac{\int p_h\cdot \rmd u_h}{\|p_h\|_{\rmL^2} \, \|u_h\|_{\rmL^2}} \geq 1/C.
\end{equation} 
\end{corollary}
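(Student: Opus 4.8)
The plan is to establish the Inf-Sup condition by the classical device of exhibiting, for each nonzero $p_h \in \rmd X_h$, an explicit test form $u_h \in X_h$ making the quotient large; the entire argument is a direct consequence of the discrete Poincar\'e-Friedrichs inequality (Proposition \ref{prop:discfried}). The key preliminary observation is that the splitting $X_h = V_h \oplus W_h$ is $\rmL^2$-orthogonal: indeed $W_h = \{u_h \in X_h : \rmd u_h = 0\}$ is a subspace of the finite-dimensional space $X_h$, and $V_h$ is by definition its $\rmL^2$-orthogonal complement, so every $z_h \in X_h$ decomposes uniquely as $z_h = v_h + w_h$ with $v_h \in V_h$ and $w_h \in W_h$.

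First I would show that $\rmd$ restricts to a surjection $V_h \to \rmd X_h$. Given $p_h \in \rmd X_h$, pick any $z_h \in X_h$ with $\rmd z_h = p_h$ and decompose $z_h = v_h + w_h$ as above. Since $\rmd w_h = 0$, this gives $\rmd v_h = \rmd z_h = p_h$ with $v_h \in V_h$. (If one wants injectivity too: $v_h \in V_h$ and $\rmd v_h = 0$ force $v_h \in W_h \cap V_h = \{0\}$, though only surjectivity is needed below.)

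Then I would use this $v_h$ as the test form. Taking $u_h = v_h$ gives
\begin{equation}
\int p_h \cdot \rmd u_h = \int p_h \cdot p_h = \|p_h\|_{\rmL^2}^2 ,
\end{equation}
while the discrete Poincar\'e-Friedrichs inequality applied to $v_h \in V_h$ yields
\begin{equation}
\|u_h\|_{\rmL^2} = \|v_h\|_{\rmL^2} \leq C\,\|\rmd v_h\|_{\rmL^2} = C\,\|p_h\|_{\rmL^2} ,
\end{equation}
with $C$ the constant of Proposition \ref{prop:discfried}. Hence for this particular choice of $u_h$,
\begin{equation}
\frac{\int p_h \cdot \rmd u_h}{\|p_h\|_{\rmL^2}\,\|u_h\|_{\rmL^2}} = \frac{\|p_h\|_{\rmL^2}}{\|v_h\|_{\rmL^2}} \geq \frac{1}{C} ,
\end{equation}
so the supremum over $u_h \in X_h$ is at least $1/C$. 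Since this bound holds for every nonzero $p_h \in \rmd X_h$ with the same constant $C$, passing to the infimum over $p_h$ proves the corollary.

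There is no genuine obstacle here: all the content is carried by Proposition \ref{prop:discfried}, and the only step requiring a moment's care is arranging that the potential lies in $V_h$ rather than in all of $X_h$, since it is precisely on $V_h$ that the Poincar\'e-Friedrichs bound is available. I would note in passing that the constant obtained equals the Poincar\'e-Friedrichs constant, and that the degenerate case $p_h = 0$ is excluded from the quotient as usual.
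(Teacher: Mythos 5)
Your proof is correct and is precisely the standard argument the paper leaves implicit: the corollary is stated there without proof as an immediate consequence of Proposition \ref{prop:discfried}, and your choice of the test form $u_h = v_h \in V_h$ with $\rmd v_h = p_h$, using the $\rmL^2$-orthogonal splitting $X_h = V_h \oplus W_h$, is exactly how that deduction goes. The constant is indeed the Poincar\'e--Friedrichs constant, and no further ideas are needed.
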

This guarantees the stability (uniform boundedness) of the above Fortin operators $\Phi_h : X \to X$.
Notice that if $u \in \calH^k(M)$ then $\rmd \Phi_h u = 0$ and by density:
\begin{equation}
\forall v \in \Omega^{k-1}_h(M ) \quad \int u \cdot \rmd v = 0,
\end{equation}
so that $\Phi_h$ maps $\calH^k(M)$ into $\calH^k_h(M)$. The Fortin operator can therefore be used to obtain estimates on discrete harmonic forms. We will use the following. We have:
\begin{equation}
\| (I -\Phi_h)|_{\calH^k(M)}\|_{X \to X} \to 0.
\end{equation}
In particular $\Phi_h : \calH^k(M) \to \calH^k_h(M)$ is eventually injective. Using the cohomological properties of the preceding section we know that $\calH^k(M)$ and $\calH^k_h(M)$ have the same dimension, so that $\Phi_h : \calH^k(M) \to \calH^k_h(M)$ is invertible. Denote by $\Psi_h : \calH^k_h(M) \to \calH^k(M)$ its inverse. The following elementary estimate will be used:
\begin{equation}\label{eq:harmonicapprox}
\|I-\Psi_h\| \leq \|I-\Phi_h\|/(1-\|I-\Phi_h\|) \to 0.
\end{equation}

The following is a strengthening of the discrete Poincar\'e-Friedrichs estimate:
\begin{proposition}\label{prop:gap} For each $\epsilon$ there is $h'$ such that for each $h < h'$ we have:
\begin{equation}
\forall v_h \in V_h \quad \| v_h - P_V v_h \|_{\rmL^2} \leq \epsilon \| \rmd v_h \|_{\rmL^2}.
\end{equation}  
\end{proposition}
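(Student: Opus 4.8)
The plan is to recognise that $v_h - P_V v_h$ is simply the $\rmL^2$-orthogonal projection of $v_h$ onto the space $W$ of closed forms, since $P_V$ is the orthogonal projection onto $V=W^\perp$, and then to estimate this projection by testing against discrete closed forms. Writing $g = v_h - P_V v_h \in W$, orthogonality of $g$ and $P_V v_h$ gives $\int v_h \cdot g = \|g\|^2$, and since $v_h \in V_h$ is $\rmL^2$-orthogonal to $W_h$, for \emph{every} $w_h \in W_h$ one has $\|g\|^2 = \int v_h \cdot (g - w_h) \leq \|v_h\|\,\|g - w_h\|$ (unspecified norms being $\rmL^2$). The discrete Poincar\'e-Friedrichs inequality (Proposition \ref{prop:discfried}) controls $\|v_h\| \leq C\|\rmd v_h\|$, so the whole matter reduces to producing one discrete closed form $w_h$ approximating $g$ with error of order $\|\rmd v_h\|$ times a quantity that can be driven below any prescribed level.

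To build $w_h$ I would reuse the Hodge decomposition $g = \rmd u + f$ already employed in the proof of Proposition \ref{prop:discfried} (equation (\ref{eq:hodgedec})), where $f$ is harmonic and $u$ is controlled in a space compactly embedded in $\rmL^2$ with norm bounded by $\|g\| \leq \|v_h\| \leq C\|\rmd v_h\|$. A natural candidate is $w_h = \rmd \Pi_h R^\delta_h u + \Pi_h f$, which lies in $W_h$ because exact discrete forms are closed and $\Pi_h$ commutes with $\rmd$, so that $\rmd \Pi_h f = \Pi_h \rmd f = 0$. Using the commutator identity $\rmd \Pi_h R^\delta_h = \Pi_h R^\delta_h \rmd - \Pi_h[R^\delta_h,\rmd]$ one then gets
\[
g - w_h = (I - \Pi_h R^\delta_h)\rmd u + \Pi_h[R^\delta_h,\rmd]u + (I - \Pi_h)f,
\]
which are exactly the three terms analysed at the end of the proof of Proposition \ref{prop:discfried}.

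Each term would be bounded as there, with one crucial modification. In handling $\|(I - \Pi_h R^\delta_h)\rmd u\|$ I would write $\rmd u = v_h - P_V v_h - f$ and split into $\|(I-\Pi_h R^\delta_h)v_h\|$, $\|(I-\Pi_h R^\delta_h)P_V v_h\|$ and $\|(I-\Pi_h R^\delta_h)f\|$; the first is estimated by pseudostability (Proposition \ref{prop:pseudostable}), but now I select $\delta$ so that the pseudostability constant is an arbitrarily small $\epsilon_0$, rather than freezing it at $1/2$. The second term is bounded by $\|(I-\Pi_h R^\delta_h)\|_{V \to \rmL^2}\,\|P_V v_h\|_X$, which tends to zero by Proposition \ref{prop:compnormconv}, noting $\rmd P_V v_h = \rmd v_h$ and the continuous Poincar\'e-Friedrichs bound $\|P_V v_h\|_X \leq C\|\rmd v_h\|$. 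The third term, together with $\Pi_h[R^\delta_h,\rmd]u$ and $(I-\Pi_h)f$, vanishes as $h\to 0$ by Theorem \ref{theo:L2conv}, Proposition \ref{prop:compcommut}, and the smoothness and finite-dimensionality of the harmonic forms, all against a right-hand side bounded by $\|\rmd v_h\|$. Collecting, $\|g - w_h\| \leq (C\epsilon_0 + o_h(1))\|\rmd v_h\|$, whence $\|g\|^2 \leq \|v_h\|\,\|g - w_h\| \leq C(C\epsilon_0 + o_h(1))\|\rmd v_h\|^2$.

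The main obstacle is the ordering of the two limiting procedures: the conclusion must survive the square root of the product $\|v_h\|\,\|g-w_h\|$, so I would first fix $\delta$ small enough that $C^2\epsilon_0 \leq \epsilon^2/2$, and only afterwards send $h \to 0$ to push the remaining $o_h(1)$ contribution below $\epsilon^2/2$. The point worth stressing is precisely that $\delta$ must be frozen before the limit in $h$, because Propositions \ref{prop:compnormconv} and \ref{prop:compcommut} furnish convergence in $h$ only for a fixed $\delta$; this staging is exactly what upgrades the merely uniform discrete Poincar\'e-Friedrichs constant into the vanishing gap asserted here.
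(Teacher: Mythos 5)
Your proof is correct, and it reaches the conclusion by a noticeably different assembly of the same toolbox. The paper's own proof never returns to the Hodge decomposition: it applies $\Pi_h R^\delta_h$ to $g = v_h - P_V v_h$ itself, observes that $\rmd \Pi_h R^\delta_h g = \Pi_h[\rmd, R^\delta_h]g$ since $\rmd g = 0$, makes this commutator small by combining Proposition \ref{prop:pseudostablecommut} (on the $v_h$ part, for $\delta$ small) with Proposition \ref{prop:compcommut} (on the $P_V v_h$ part, as $h \to 0$) and Proposition \ref{prop:discfried}, and then invokes Corollary \ref{cor:inffried} to correct $\Pi_h R^\delta_h g$, which is only approximately closed, into a genuine $w_h \in W_h$. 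Its duality step is also run against $g$ rather than $v_h$: using $\int g \cdot w_h = 0$ it gets $\|g\|^2 \leq \int g \cdot (I - \Pi_h R^\delta_h)g + (\epsilon/2)\|g\|\,\|\rmd v_h\|$, so after dividing by $\|g\|$ the final estimate is linear in the small parameters and no square root appears. You instead manufacture an \emph{exactly} closed competitor $w_h = \rmd \Pi_h R^\delta_h u + \Pi_h f$ from the decomposition (\ref{eq:hodgedec}) and pair against $v_h$ via $\|g\|^2 = \int v_h \cdot (g - w_h)$; this amounts to re-running the estimates of Proposition \ref{prop:discfried} with the pseudostability constant of Proposition \ref{prop:pseudostable} sharpened from $1/2$ to an arbitrary $\epsilon_0$, which is legitimate precisely because of the quantifier order in that proposition ($\epsilon$ first, then $\delta'$, uniformly in $h$). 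What each route buys: yours dispenses with Corollary \ref{cor:inffried} and with Proposition \ref{prop:pseudostablecommut} entirely, since your commutator only ever hits the potential $u$, which lives in a space compactly embedded in $\rmL^2$ with norm controlled by $\|g\| \leq \|v_h\| \leq C\|\rmd v_h\|$, so Proposition \ref{prop:compcommut} suffices; the cost is the quadratic bound $\|g\|^2 \leq C(C\epsilon_0 + o_h(1))\|\rmd v_h\|^2$ and the attendant $\epsilon^2$ bookkeeping, which you handle correctly by calibrating $\epsilon_0$ and $h'$ at the level of $\epsilon^2/2$. Your closing emphasis on freezing $\delta$ (via pseudostability) \emph{before} sending $h \to 0$ (where Propositions \ref{prop:compnormconv} and \ref{prop:compcommut} and Theorem \ref{theo:L2conv} apply only at fixed $\delta$) reproduces exactly the two-stage structure of the paper's argument and is indeed the crux of upgrading the uniform constant of Proposition \ref{prop:discfried} to a vanishing gap.
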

\begin{proof}
In the following all unspecified norms are $\rmL^2(M)$ norms.
Remark that since $\rmd (v_h - P_V v_h)=0$ we have:
\begin{equation}
\rmd \Pi_h R^\delta_h( v_h -PV v_h) = \Pi_h [\rmd, R^\delta_h](v_h - P_V v_h).
\end{equation}
Pick $\epsilon >0$. From Propositions \ref{prop:compcommut}, \ref{prop:pseudostablecommut} and \ref{prop:discfried}  there is $\delta>0$ and $h'$ such that for $h <h'$:
\begin{equation}
\| \rmd \Pi_h R^\delta_h( v_h -P_V v_h)\| \leq \epsilon/(2C) \|\rmd v_h \|,
\end{equation}
where $C$ is the constant appearing in  Corollary \ref{cor:inffried}. This corollary provides $w_h \in W_h$ such that:
\begin{equation}
\| \rmd \Pi_h R^\delta_h( v_h -P_V v_h) - w_h\| \leq \epsilon/2 \|\rmd v_h \|.
\end{equation}

Actually, using Proposition \ref{prop:pseudostable}, we will suppose that $\delta$ was also chosen such that for all $h$ and all $u_h \in X_h$:
\begin{equation}
\|(I - \Pi_h R^\delta_h) u_h\| \leq \epsilon/2 \|u_h\|.
\end{equation}

We remark that $w_h$ is orthogonal both to $v_h$ and $P_V v_h$. Now we write:
\begin{eqnarray}
& & \| v_h -P_V v_h\|^2\\
& = & \ts \int (v_h - P_V v_h)\cdot (v_h - P_V v_h),\\
& \leq & \ts \int (v_h - P_V v_h)\cdot (v_h - P_V v_h - \Pi_h R^\delta_h(v_h -P_V v_h) + w_h) + \\
& &  \quad  \epsilon/2 \| v_h - P_V v_h\| \|\rmd v_h \| ,\\
& \leq  & \ts \int (v_h - P_V v_h)\cdot ( (I - \Pi_h R^\delta_h)(v_h  - P_V v_h)) +\\
& &   \quad \epsilon/2 \| v_h - P_V v_h\| \|\rmd v_h \|,\\
& \leq & \| v_h -P_V v_h\| ( \|(I - \Pi_h R^\delta_h) v_h\| + \| (I - \Pi_h R^\delta_h) P_V v_h\| + \epsilon/2 \| \rmd v_h\| ).
\end{eqnarray}
This gives:
\begin{eqnarray}
\| v_h -P_V v_h\| &\leq & \|(I - \Pi_h R^\delta_h) v_h\| + \| (I - \Pi_h R^\delta_h) P_V v_h\| + \epsilon/2 \| \rmd v_h\|,\\
& \leq &  \epsilon/2 (\|v_h\| +\| \rmd v_h\|)  + \|I - \Pi_h R^\delta_h\|_{V \to \rmL^2} \|P_V v_h\|_X.
\end{eqnarray}
Applying Proposition \ref{prop:compnormconv} we get, for $h$ small enough: 
\begin{equation}
\| v_h -P_V v_h\| \leq \epsilon (\| v_h\| + \| \rmd v_h\|).
\end{equation}
By Proposition \ref{prop:discfried}, this completes the proof.
\end{proof}

The following corollary is often referred to as discrete compactness:
\begin{corollary}\label{cor:disccomp}
Suppose $(u_h)$ is a sequence of elements $u_h \in X^k_h$ such that $\|u_h\|_{\rmL^2}$ and $\|\rmd u_h\|_{\rmL^2}$ are bounded. Suppose furthermore that:
\begin{equation}
\forall h \quad \forall u'_h \in X^{k-1}_h \quad \int u_h \cdot \rmd u'_h = 0.
\end{equation}
Then (out of every subsequence) one can extract a subsequence converging in $\rmL^2$.
\end{corollary}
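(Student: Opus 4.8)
The plan is to reduce the statement to results already established: the discrete Poincar\'e--Friedrichs inequality (Proposition~\ref{prop:discfried}), its sharpened ``gap'' form (Proposition~\ref{prop:gap}), the compact (Rellich) embedding of $V$ into $\rmL^2$ noted just before Proposition~\ref{prop:discfried}, and the approximation of harmonic forms furnished by the Fortin operator (equation~(\ref{eq:harmonicapprox})). The hypothesis $\int u_h \cdot \rmd u'_h = 0$ for all $u'_h \in X^{k-1}_h$ says precisely that $u_h$ is $\rmL^2$-orthogonal to $\rmd X^{k-1}_h$, and my first move would be to exploit this through a discrete Hodge splitting.

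Concretely, I would use the orthogonal decomposition $X_h = W_h \oplus V_h$ together with $W_h = \rmd X^{k-1}_h \oplus \calH^k_h(M)$, so that $u_h = \rmd\phi_h + g_h + v_h$ with $\rmd\phi_h \in \rmd X^{k-1}_h$, $g_h \in \calH^k_h(M)$ and $v_h \in V_h$. Testing the hypothesis against $\rmd\phi_h$ and using that both $g_h$ and $v_h$ are $\rmL^2$-orthogonal to $\rmd X^{k-1}_h$ forces $\|\rmd\phi_h\|_{\rmL^2}=0$; hence $u_h = g_h + v_h$ is an orthogonal splitting into a discrete harmonic part and a coclosed part. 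Orthogonality then gives that $\|g_h\|_{\rmL^2}$ and $\|v_h\|_{\rmL^2}$ are bounded, while $\rmd u_h = \rmd v_h$ shows $\|\rmd v_h\|_{\rmL^2}$ is bounded by the same constant as $\|\rmd u_h\|_{\rmL^2}$.

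For the harmonic part I would pass through the inverse Fortin isomorphism $\Psi_h : \calH^k_h(M) \to \calH^k(M)$. Since $\|I-\Psi_h\| \to 0$ by (\ref{eq:harmonicapprox}), the norms $\|\Psi_h\|$ are bounded, so $(\Psi_h g_h)$ is a bounded sequence in the \emph{finite-dimensional} space $\calH^k(M)$ and a subsequence converges in $\rmL^2$. Along that subsequence $\|g_h - \Psi_h g_h\|_{\rmL^2} \le \|I-\Psi_h\|\,\|g_h\|_{\rmL^2} \to 0$, so $(g_h)$ converges in $\rmL^2$ as well.

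For the coclosed part the decisive observation is that Proposition~\ref{prop:gap} already pushes $v_h$ uniformly towards the compactly embedded space $V$: applied along a sequence $\epsilon \to 0$ and combined with the uniform bound on $\|\rmd v_h\|_{\rmL^2}$, it yields $\|v_h - P_V v_h\|_{\rmL^2} \to 0$ for the whole sequence. Because $\rmd P_V v_h = \rmd v_h$, the sequence $(P_V v_h)$ is bounded in $V$ for the $X$-norm, hence precompact in $\rmL^2$ by Rellich compactness; extracting a convergent subsequence of $(P_V v_h)$ and adding the vanishing term $v_h - P_V v_h$ makes $(v_h)$ converge along it. Passing to a common subsequence for $(g_h)$ and $(v_h)$ then makes $u_h = g_h + v_h$ converge in $\rmL^2$, which is the claim. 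I expect no serious obstacle here, since all the genuine analysis is contained in the cited propositions; the only point requiring care is the bookkeeping of the discrete Hodge splitting --- verifying that the orthogonality hypothesis annihilates the exact part $\rmd\phi_h$ and that $\rmd v_h = \rmd u_h$ --- after which the corollary is essentially the assembly of those results.
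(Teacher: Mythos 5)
Your proposal is correct and takes essentially the same route as the paper's own (much terser) proof: the discrete Hodge splitting in which the orthogonality hypothesis makes the closed part discrete harmonic, Proposition~\ref{prop:gap} plus Rellich compactness of $V$ applied to $P_V v_h$ for the coclosed part, and estimate~(\ref{eq:harmonicapprox}) plus finite-dimensionality of $\calH^k(M)$ for the harmonic part. You merely spell out the bookkeeping --- verifying that the exact component $\rmd\phi_h$ is annihilated --- which the paper compresses into the single assertion that $w_h$ is discrete harmonic.
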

\begin{proof}
Indeed $u_h$ can be decomposed as $u_h= v_h + w_h$ where $v_h \in V_h$ and $w_h \in W_h$. In fact $w_h$ is discrete harmonic.
Now $P_V v_h$ is bounded in $V$ which is compactly embedded in $\rmL^2$, so $(P_V v_h)$ is relatively compact in $\rmL^2$. Moreover $\Psi_h w_h$ is harmonic and is bounded in a finite-dimensional space. One then concludes using Proposition \ref{prop:gap} and estimate (\ref{eq:harmonicapprox}) respectively.
\end{proof}
 
\appendix

\section{Complexes and the long exact sequence\label{sec:compl}}
We now recall some definitions and facts from homological algebra, and refer the reader to Lang \cite{Lan03} and Gelfand-Manin \cite{GelMan03} for thorough expositions.  In this paper by a \emph{complex} we mean a sequence $A^\bullet$ of vectorspaces equipped with linear operators $d^k : A^k \to A^{k+1}$ called \emph{differentials} and satisfying, for each $k$, $ d^{k+1} \circ d^k = 0$. The \emph{cohomology group} $\rmH^k A^\bullet$ is (the vectorspace) defined by:
\begin{equation}
\rmH^{k} A^\bullet = (\ker d^k: A^k \to A^{k+1})/(\im d^{k-1}: A^{k-1} \to A^k)
\end{equation}
Most often the index $k$ in $d^k$ is dropped and the complex is represented by a diagram:
\begin{equation}
\cdots \to A^{k-1} \to A^k \to A^{k+1} \to \cdots,
\end{equation}
where it is implicit that the arrows represent instances of the differential $d$.

If $A^\bullet$ and $B^\bullet$ are two complexes, a \emph{morphism of complexes} $f^\bullet: A^\bullet \to B^\bullet$, is a sequence of linear operators $f^k : A^k \to B^k$ such that the following diagram commutes:
\begin{equation}
\begin{array}{rcr}
A^k & \to  & A^{k+1}\\
\witharrowright{f^k} & & \witharrowright{f^{k+1}} \\
B^k & \to & B^{k+1}
\end{array}
\end{equation}
A morphism of complexes $f^\bullet: A^\bullet \to B^\bullet$ induces a sequence or linear maps $\rmH^k f^\bullet : \rmH^k A^\bullet \to \rmH^k B^\bullet$. We have that $\rmH^k (f^\bullet \circ g^\bullet) = (\rmH^k f^\bullet) \circ (\rmH^k g^\bullet)$.

Given two morphisms of complexes $f^\bullet, g^\bullet : A^\bullet \to B^\bullet$, a \emph{homotopy operator} from $f^\bullet$ to $g^\bullet$ is a family $h^\bullet$ of linear operators $h^k : A^k \to B^{k-1}$ such that:
\begin{equation}
g^k - f^k = h^{k+1}d^k + d^{k-1} h^{k}.
\end{equation}
If $f^\bullet$ and $g^\bullet$ are homotopic in the sense that a homotopy operator exists from one to the other, they induce the \emph{same} maps $\rmH^k A^\bullet \to \rmH^k B^\bullet$.

One of the basic tools used in this paper is the following:
Suppose we are given three complexes $A^\bullet$, $B^\bullet$ and  $C^\bullet$, and morphisms of complexes $f^\bullet: A^\bullet \to B^\bullet$ and $g^\bullet: B^\bullet \to C^\bullet$, providing for each $k$ a \emph{short exact sequence}:
\begin{equation}
0 \to A^k \to B^k \to C^k \to 0.
\end{equation}
Then (see e.g. Lang \cite{Lan03} chapter XX, \S 2) one can construct a \emph{long exact sequence} linking the cohomology groups:
\begin{equation}
\cdots \ \rmH^k A^\bullet \to \rmH^k B^\bullet  \to \rmH^k C^\bullet \to \rmH^{k+1} A^\bullet \to \rmH^{k+1} B^\bullet \cdots.
\end{equation}
Here the first arrow is $\rmH^k f^\bullet$, the second is $\rmH^k g^\bullet$ and the third one -- usually denoted $\delta^k$ -- is constructed by the \emph{snake lemma}.

\begin{remark}
In particular, if we are given a morphism of complexes $g^\bullet: B^\bullet \to C^\bullet$ consisting of surjections and define $A^k$ to be the kernel of $g^k$, we remark that $A^\bullet$ is a subcomplex of $B^\bullet$ called the \emph{kernel complex}, which has trivial cohomology if and only if $g^\bullet$ induces isomorphisms in cohomology $\rmH^k B^\bullet  \to \rmH^k C^\bullet$.
\end{remark}

Suppose that the above situation holds for  complexes $A^\bullet_0$, $B^\bullet_0$ and  $C^\bullet_0$ (with $f^\bullet_0$ and $g^\bullet_0$) and also for $A^\bullet_1$, $B^\bullet_1$ and  $C^\bullet_1$ (with $f^\bullet_1$ and $g^\bullet_1$). Suppose furthermore that we have morphisms of complexes $\alpha^\bullet: A_0^\bullet \to A_1^\bullet$, $\beta^\bullet: B_0^\bullet \to B_1^\bullet$ and $\gamma^\bullet: C_0^\bullet \to C_1^\bullet$ intertwining $f^\bullet_1$ with $f^\bullet_0$ and $g^\bullet_1$ with $g^\bullet_0$. Explicitly we have commuting diagrams:
\begin{equation}
\begin{array}{ccccccccc}
0& \witharrowunder{} & A^k_0 & \witharrowunder{f_0^k}  & B^k_0 & \witharrowunder{g_0^k} &  C^k_0 & \witharrowunder{}& 0\\
                     & & \witharrowright{\alpha^k} & & \witharrowright{\beta^k} & & \witharrowright{\gamma^k}\\
0& \witharrowunder{} & A^k_1 & \witharrowunder{f_1^k}  & B^k_1 & \witharrowunder{g_1^k} &  C^k_1 & \witharrowunder{}& 0
\end{array}
\end{equation}
Then the following diagram commutes:
\begin{equation}
\begin{array}{ccccccc}
  \rmH^k A^\bullet_0 & \witharrowunder{\rmH^k f^\bullet_0} & \rmH^k B^\bullet_0 & \witharrowunder{\rmH^k g^\bullet_0} & \rmH^k C^\bullet_0 & \witharrowunder{\delta^k_0} & \rmH^{k+1} A^\bullet_0 \\
 \witharrowright{\rmH^k\alpha^\bullet} & & \witharrowright{\rmH^k \beta^\bullet} & & \witharrowright{\rmH^k \gamma^\bullet} & & \witharrowright{\rmH^{k+1} \alpha^\bullet}\\
  \rmH^k A^\bullet_1 & \witharrowunder{\rmH^k f^\bullet_1} & \rmH^k B^\bullet_1 & \witharrowunder{\rmH^k g^\bullet_1} & \rmH^k C^\bullet_1 & \witharrowunder{\delta^k_1} & \rmH^{k+1} A^\bullet_1
\end{array}
\end{equation}

\section{Composition with compact operators}

\begin{lemma}\label{lem:autom}
Let $(A_n)$ be a sequence of continuous operators from a Hilbert space $Y$ to a Banach space $Z$, such that, for some operator $A$:
\begin{equation}
\forall u \in X \quad \lim_{n \to \infty} A_n u = Au.
\end{equation}
Then $A$ is continuous and for any Banach space $X$ and any compact operator $K:X \to Y$ we have:
\begin{equation}
\lim_{n \to \infty} \|AK - A_nK\|_{X \to Z} = 0.
\end{equation}
\end{lemma}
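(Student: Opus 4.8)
The plan is to treat the two assertions separately. Continuity of $A$ is a direct application of the uniform boundedness principle. Since $Y$ is complete and, for each $u$ in the domain $Y$, the sequence $(A_n u)$ converges and is therefore bounded, the Banach--Steinhaus theorem gives $M := \sup_n \|A_n\|_{Y \to Z} < \infty$. Passing to the limit in $\|A_n u\|_Z \le \|A_n\|_{Y\to Z}\,\|u\|_Y \le M\|u\|_Y$ yields $\|Au\|_Z \le M\|u\|_Y$, so $A$ is continuous with $\|A\|_{Y\to Z}\le M$. In particular the differences $B_n := A - A_n$ are uniformly bounded, $\sup_n \|B_n\|_{Y\to Z}\le 2M$, and converge pointwise to $0$ on $Y$.

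For the norm convergence I would upgrade this pointwise decay of $(B_n)$ to uniform decay on the relatively compact set $\overline{K(B_X)}$, where $B_X$ denotes the closed unit ball of $X$. Observe first that
\begin{equation}
\|AK - A_nK\|_{X\to Z} = \|B_n K\|_{X \to Z} = \sup_{\|x\|_X \le 1}\|B_n Kx\|_Z = \sup_{y \in K(B_X)} \|B_n y\|_Z.
\end{equation}
Fix $\epsilon>0$. Because $\overline{K(B_X)}$ is compact, choose a finite $\delta$-net $y_1,\dots,y_m$ with $\delta = \epsilon/(2M)$, and then pick $N$ with $\|B_n y_i\|_Z<\epsilon$ for all $i$ and all $n\ge N$; this is possible since there are only finitely many net points and $B_n y_i\to 0$. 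For arbitrary $y\in K(B_X)$ select $y_i$ with $\|y-y_i\|_Y<\delta$ and estimate, for $n\ge N$,
\begin{equation}
\|B_n y\|_Z \le \|B_n\|_{Y\to Z}\,\|y-y_i\|_Y + \|B_n y_i\|_Z \le 2M\delta + \epsilon = 2\epsilon.
\end{equation}
Taking the supremum over $y\in K(B_X)$ gives $\|B_n K\|_{X\to Z}\le 2\epsilon$ for $n\ge N$, which proves the limit is $0$. Alternatively, one may argue by contradiction: were $\|B_nK\|$ not to tend to $0$, one could extract a subsequence and unit vectors $x_n$ with $\|B_n Kx_n\|_Z$ bounded below, then use compactness of $K$ to pass to a further subsequence along which $Kx_n\to y$ in $Y$, whence $\|B_n Kx_n\|_Z \le 2M\|Kx_n-y\|_Y + \|B_n y\|_Z \to 0$, a contradiction.

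The only substantive point — the main obstacle — is the interplay of the two hypotheses in the second assertion. Pointwise convergence by itself is far too weak to force operator-norm convergence (it fails, for instance, for the iterated shifts on $\ell^2$, which tend to $0$ pointwise yet have norm $1$); what rescues the statement is that the compact operator $K$ compresses the unit ball of $X$ into a precompact subset of $Y$, on which the uniform bound $\sup_n\|B_n\|\le 2M$ together with a finite $\epsilon$-net converts pointwise decay into uniform decay. Since no quantitative rate is available, the argument is intrinsically a compactness (net or subsequence) argument rather than a direct estimate, and this is exactly why it is applied in Proposition~\ref{prop:compnormconv} with $X\hookrightarrow \rmL^2$ compact.
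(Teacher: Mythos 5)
Your proof is correct, but it follows a genuinely different route from the paper's. The paper dispatches the norm convergence in one line by approximating the operator $K$ itself: since $Y$ is a Hilbert space, every compact $K : X \to Y$ is a norm limit of finite-rank operators (the paper cites Rudin, Chapter 4, exercise 13), and for finite-rank $K_\epsilon$ one has $\|(A-A_n)K_\epsilon\| \to 0$ trivially from pointwise convergence on the finitely many vectors spanning the range, while the Banach--Steinhaus bound $\sup_n \|A - A_n\| \leq 2M$ controls $\|(A-A_n)(K-K_\epsilon)\|$. You instead approximate the \emph{set} $K(B_X)$ rather than the operator $K$: total boundedness of the image of the unit ball gives a finite $\delta$-net, pointwise decay handles the net points, and the same uniform bound handles the rest. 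The two arguments are close cousins, but yours is more elementary and strictly more general: it never uses the Hilbert structure of $Y$, so it proves the lemma for $Y$ an arbitrary Banach space, whereas the paper's citation rests on the approximation property, which Hilbert spaces enjoy but general Banach spaces need not (this is presumably why the statement specifies $Y$ Hilbert at all). Your contradiction-and-subsequence variant is an equally valid repackaging of the same compactness. The first half of your argument (continuity of $A$ via uniform boundedness, after the harmless correction of the misprint $u \in X$ to $u \in Y$) coincides with the paper's; the only cosmetic caveat is the degenerate case $M = 0$ in your choice $\delta = \epsilon/(2M)$, which is trivial anyway.
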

\begin{proof}
Continuity of $A$ follows from the uniform boundedness principle, and the norm convergence follows from the fact that $K$ can be approximated in the $\rmL(X,Y)$ norm by finite rank operators, see e.g. Rudin \cite{Rud91} (Chapter 4, exercise 13).
\end{proof}

\section*{Acknowledgments}
I am grateful to Geir Ellingsrud and Jon Rognes for helping me with the homological algebra; in particular they they showed me how to use long exact sequences and the five lemma to prove Theorem \ref{theo:cohom}. I also thank Robert Kotiuga for making me aware of Weil's paper \cite{Wei52}.

\end{document}